\theoremstyle{definition}
\newtheorem{thm}{Theorem}[section]
\newtheorem{cor}[thm]{Corollary}
\newtheorem{lem}[thm]{Lemma}
\newtheorem{prop}[thm]{Proposition}
\newtheorem{defn}[thm]{Definition}
\newtheorem{eg}[thm]{Example}
\newtheorem{rem}[thm]{Remark}
\newtheorem{conj}[thm]{Conjecture}
\newtheorem{ques}[thm]{Question}
\newcommand{\QQ}{\mathbb Q}
\newcommand{\RR}{\mathbb R}
\newcommand{\CC}{\mathbb C}
\newcommand{\PP}{\mathbb P}
\newcommand{\ZZ}{\mathbb Z}
\renewcommand{\AA}{\mathbb A}
\def\OO{\mathcal{O}}
\def\MM{{\mathbf M}}%{{\mathcal M}} or {{M_\bullet}}
\def\BB{{\mathbf B}}%{{B_\bullet}}
\def\SS{{\mathbf S}}%{{\FF}}
\def\BBi{{\BB_\lambda}}
\def\fP{\mathcal{P}}
\def\bz{{\boldsymbol{\zeta}}}
\def\be{\mathbf{e}}
\def\br{\mathbf{r}}
\def\bt{\mathbf{t}}
\def\bk{\mathbf{k}}
\def\pp{\mathfrak p}
\def\qq{\mathfrak q}
\def\mm{\mathfrak m}
\DeclareMathOperator{\Cone}{Cone}
\DeclareMathOperator{\Hilb}{Hilb}
\DeclareMathOperator{\One}{\mathbbm{1}}
\DeclareMathOperator{\rk}{rk}
\newcommand{\KT}{K\mathcal T}
\title{K-theoretic Tutte polynomials of morphisms of matroids}
\author{Rodica Dinu, Christopher Eur, Tim Seynnaeve}
\address{Faculty of Mathematics and Computer Science, University of Bucharest, Str. Academiei 14, 010014, Bucharest, Romania}
\email{rdinu@fmi.unibuc.ro}
\address{Stanford University. Stanford, CA. USA.}
\email{chriseur@stanford.edu}
\address{Universit{\"a}t Bern, Mathematisches Institut, Alpeneggstrasse 22, 3012 Bern,Switzerland}
\email{tim.seynnaeve@math.unibe.ch}
\begin{document}

\maketitle

\begin{abstract}
We generalize the Tutte polynomial of a matroid to a morphism of matroids via the K-theory of flag varieties.  We introduce two different generalizations, and demonstrate that each has its own merits, where the trade-off is between the ease of combinatorics and geometry.  One generalization recovers the Las Vergnas Tutte polynomial of a morphism of matroids, which admits a corank-nullity formula and a deletion-contraction recursion.  The other generalization does not, but better reflects the geometry of flag varieties.
\end{abstract}

\section{Introduction}

Matroids are combinatorial abstractions of hyperplane arrangements that have been fruitful grounds for interactions between algebraic geometry and combinatorics.  One interaction concerns the Tutte polynomial of a matroid, an invariant first defined for graphs by Tutte \cite{Tut67}
and then for matroids by Crapo \cite{Cra69}.

\begin{defn}
Let $M$ be a matroid of rank $r$ on a finite set $[n] = \{1,2,\ldots, n\}$ with the rank function $\operatorname{rk}_M: 2^{[n]}\to \ZZ_{\geq 0}$.  Its \textbf{Tutte polynomial} $T_M(x,y)$ is a bivariate polynomial in $x,y$ defined by
\[
T_M(x,y) := \sum_{S \subseteq [n]} (x-1)^{r - \operatorname{rk}_M(S)}(y-1)^{|S| - \operatorname{rk}_M(S)}.
\]
\end{defn}

An algebro-geometric interpretation of the Tutte polynomial was given in \cite{FS12} via the $K$-theory of the Grassmannian.  Let $Gr(r;n)$ be the Grassmannian of $r$-dimensional linear subspaces in $\CC^n$, and more generally let $Fl(\br ; n)$ be the flag variety of flags of linear spaces of dimensions $\br = (r_1, \ldots, r_k)$.
The torus $T = (\CC^*)^n$ acts on $Gr(r;n)$ and $Fl(\br;n)$ by its standard action on $\CC^n$.  A point $L\in Gr(r;n)$ on the Grassmannian corresponds to a realization of a matroid, and its torus-orbit closure defines a $K$-class $[\OO_{\overline{T\cdot L}}]\in K^0(Gr(r;n))$ that depends only on the matroid.  In general, a matroid $M$ of rank $r$ on $\{1, \ldots, n\}$ defines a $K$-class $y(M) \in K^0(Gr(r;n))$.  Fink and Speyer related $y(M)$ to the Tutte polynomial $T_M(x,y)$ via the diagram
\begin{equation}\label{eqn:introFMGr}
\begin{tikzcd}
 &Fl(1,r,n-1;n) \arrow[ld, "\pi_r"'] \ar[rd, "\pi_{(n-1)1}"] & &\\
Gr(r;n) & & Gr(n-1;n) \times Gr(1;n) \arrow[r, equal] & (\PP^{n-1})^\vee\times  \PP^{n-1},
\end{tikzcd}
\end{equation}
where $\pi_r$ and $\pi_{(n-1)1}$ are maps that forget appropriate subspaces in the flag.

\begin{thm} \cite[Theorem 5.1]{FS12}
Let $\OO(1)$ be the line bundle on $Gr(r;n)$ of the Pl\"ucker embedding $Gr(r;n)\hookrightarrow \PP^{\binom{n}{r}-1}$. With notations as above, we have
\[
T_M(\alpha,\beta) = (\pi_{(n-1)1})_* \pi_r^* \Big( y (M) \cdot [\OO(1)]\Big) \in K^0((\PP^{n-1})^\vee\times \PP^{n-1})\simeq \QQ[\alpha,\beta]/(\alpha^n, \beta^n),
\]
where $\alpha, \beta$ are the $K$-classes of the structures sheaves of the hyperplanes of $(\PP^{n-1})^\vee, \PP^{n-1}$.
\end{thm}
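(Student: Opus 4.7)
My plan is to pass to $T$-equivariant $K$-theory for $T = (\CC^*)^n$, where all the classes in the identity admit canonical lifts, and to use the injection into functions on torus fixed points to reduce the theorem to a combinatorial identity.  Specifically, I would expand $y(M)$ via Speyer's formula as a $\ZZ$-linear combination of explicit classes $[\OO_{\overline{T \cdot L_B}}]$ attached to coordinate subspaces $L_B$ indexed by bases (or, more refinedly, by faces of the matroid polytope $\fP(M)$), reducing by linearity in $y(M)$ to computing $(\pi_{(n-1)1})_*\pi_r^*\bigl([\OO_{\overline{T \cdot L_B}}] \cdot [\OO(1)]\bigr)$ for each such generator.

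Next, I would compute these pushforwards geometrically, combined with equivariant localization.  The fiber of $\pi_r$ over $L \in Gr(r;n)$ is $\PP(L) \times \PP(\CC^n/L)^\vee \simeq \PP^{r-1}\times (\PP^{n-r-1})^\vee$, and the fiber of $\pi_{(n-1)1}$ over $(H,p) \in (\PP^{n-1})^\vee \times \PP^{n-1}$ is the smaller Grassmannian $Gr(r-1; n-2)$ when $p \subset H$, empty otherwise.  Torus-fixed points of $Fl(1,r,n-1;n)$ are triples $(\{i\}, B, [n]\setminus\{j\})$ with $i \in B$ and $j \notin B$; localizing at each such fixed point expresses the pushforward as a rational function in characters of $T$ which, upon restriction to ordinary $K^0$, becomes a polynomial in $\alpha,\beta$.

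Finally, I would assemble these contributions and match with the corank-nullity formula by Möbius inversion on the Boolean lattice $2^{[n]}$.  Using the identity $[\OO(-1)] = 1 - \alpha$ in $K^0(\PP^{n-1})$, each factor $(\alpha - 1)$ corresponds to a sign-twisted tautological class; summing the basis-indexed equivariant contributions provided by the Speyer expansion of $y(M)$ and regrouping by subsets $S \subseteq [n]$ should produce $\sum_S (\alpha-1)^{r-\rk_M(S)}(\beta-1)^{|S|-\rk_M(S)}$, with the exponents $r-\rk_M(S)$ and $|S|-\rk_M(S)$ arising because the matroid rank controls the multiplicity with which a given equivariant weight fits inside the coordinate stratum through a fixed flag.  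The hardest step will be executing this combinatorial reorganization cleanly: verifying that basis-indexed equivariant contributions collapse into subset-indexed corank-nullity terms while correctly tracking the signs from the hyperplane classes and the torus-weight denominators appearing in the localization formulas.
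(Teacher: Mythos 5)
There is a genuine gap at two places. First, your proposed expansion of $y(M)$ as a $\ZZ$-linear combination of classes $[\OO_{\overline{T\cdot L_B}}]$ for coordinate subspaces $L_B$ does not make sense: a coordinate subspace is a torus-fixed point of $Gr(r;n)$, so its orbit closure is that point and $[\OO_{\overline{T\cdot L_B}}]$ is a point class, whereas $y(M)$ is not a combination of point classes (its fixed-point localizations are $\Hilb_B(M)\cdot\prod_{(i,j)\in Ex(B)}(1-t_i^{-1}t_j)$, not $0$/$1$ data). The correct mechanism is not a decomposition of $y(M)$ but the push-pull identity of \Cref{prop:fundcomp} (FS12, Lemma 4.1): $(\pi_{(n-1)1})_*\pi_r^*(\epsilon)=\sum_{p,q}\chi\bigl(\epsilon\cdot[\bigwedge^p\mathcal S][\bigwedge^q\mathcal Q^\vee]\bigr)(\alpha-1)^p(\beta-1)^q$, which you gesture at via $[\OO(-1)]=1-\beta$ but never derive; it is this identity (proved via the Koszul complex and $R\pi_*$ of line bundles on the $\PP^{r-1}\times\PP^{n-r-1}$-bundle) that converts the geometric pushforward into sheaf Euler characteristics, where the equivariant answer is an honest Laurent polynomial and setting $t_i=1$ is legitimate. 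Without it, your plan to "restrict to ordinary $K^0$" from localization data is not an executable step, since the localization expressions carry denominators $\prod(1-\bt^{-\lambda_i})$ that vanish at $t_i=1$.

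Second, the step you flag as "the hardest" is in fact the mathematical content, and your proposal contains no tool to carry it out. After localization one faces $\sum_{B\in\mathcal B(M)}\Hilb(\Cone_B(M))\sum_{\pp\subseteq B}\sum_{\qq\subseteq[n]\setminus B}\bt^{\be_\pp+\be_\qq}u^{r-|\pp|}v^{|\qq|}$, a sum of rational functions that happens to be a Laurent polynomial; showing that the coefficient of $\bt^{\be_S}$ is exactly $u^{r-\rk_M(S)}v^{|S|-\rk_M(S)}$ (each subset contributing once, via a single well-chosen basis with $|S\cap B|=\rk_M(S)$) requires a Brion/Lawrence--Varchenko cone-flipping argument --- in this paper, \Cref{thm:sumOfMatroidCones}. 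M\"obius inversion on $2^{[n]}$ plays no role and would not resolve the cancellation among the rational-function contributions from different bases. So while your overall strategy (equivariant localization plus a combinatorial identity) points in the right direction, the two load-bearing lemmas are missing and the decomposition you propose in their place is incorrect as stated.
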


We extend this relation between matroids and the $K$-theory of Grassmannians to a relation between flag matroids and the $K$-theory of flag varieties.  As a result, we show that there are (at least) \emph{two} different generalizations of the Tutte polynomial to flag matroids, each with its own merits.

\medskip
A \textbf{flag matroid} is a sequence of matroids $\MM = (M_1, \ldots, M_k)$ on a common ground set such that every circuit of $M_i$ is a union of circuits of $M_{i-1}$ for all $2\leq i \leq k$.  The \textbf{rank} of $\MM$ is the sequence $(\operatorname{rk}(M_1), \ldots, \operatorname{rk}(M_k))$.  Most of this paper will concern the case of $k=2$.  In this case, the two-step flag matroids $(M_1,M_2)$ are often called \textbf{matroid morphisms} or \textbf{matroid quotients}.  They are combinatorial abstractions of graph homomorphisms, linear surjections, and embeddings of graphs on surfaces.  See \S\ref{subsection:flagmat} for details on flag matroids and matroid quotients.

\medskip
Many features of matroids naturally generalize to flag matroids.  For instance, just as a point on a Grassmannian corresponds to a realization of a matroid, a point $\mathbf L$ on the flag variety $Fl(\br ;n)$ corresponds to a realization of a flag matroid.  The torus-orbit closure of $\mathbf L$ defines a $K$-class $[\OO_{\overline{T\cdot \mathbf L}}] \in K^0(Fl(\br;n))$ that depends only on the flag matroid.  In general, a flag matroid $\MM$ of rank $\br$ on a ground set $\{1, \ldots, n\}$ defines a $K$-class $y(\MM)$ of the flag variety $Fl(\br;n)$.  See \S\ref{subsection:flagmatKclass} or \cite[\S8.5]{CDMS20} for details.

\medskip
At this point, however, extending the constructions on matroids to flag matroids splits into several strands, for there are (at least) two distinguished ways to generalize the diagram \eqref{eqn:introFMGr}.
\begin{itemize}
\item The "flag-geometric" diagram:
\[\label{construction1:intro}\tag{$Fl$}
\begin{tikzcd}
 &Fl(1,\br,n-1;n) \arrow[ld, "\pi_\br"'] \arrow[rd, "\pi_{(n-1)1}"] &\\
Fl(\br;n) & & (\PP^{n-1})^\vee \times \PP^{n-1}
\end{tikzcd}
\]
where $\pi_\br$ and $\pi_{(n-1)1}$ are maps that forget appropriate subspaces in the flag.

\smallskip
\item The "Las Vergnas" diagram:
\[\label{construction2:intro}\tag{$\widetilde{Fl}$}
\begin{tikzcd}
 &\widetilde{Fl}(1,\br,n-1;n) \ar[ld, "\widetilde\pi_\br"'] \ar[rd, "\widetilde\pi_{(n-1)1}"] &\\
Fl(\br;n) & & (\PP^{n-1})^\vee \times \PP^{n-1}
\end{tikzcd}
\]
where $\widetilde{Fl}(1,\br,n-1;n)$ is the variety defined as
\[ 
\widetilde{Fl}(1,\br,n-1;n) := \left\{\left.\begin{matrix}\textnormal{linear subspaces}\\ (\ell, L_1, \ldots, L_k, H)\end{matrix} \ \right| \ \begin{matrix}\textnormal{$\dim \ell = 1$, $\dim H = n-1$, $(L_1, \ldots, L_k) \in Fl(\br;n)$,} \\ \textnormal{ and $\ell \subseteq L_k$ and $L_1 \subseteq H$} \end{matrix}\right\},
\]
and $\widetilde\pi_\br$ and $\widetilde\pi_{(n-1)1}$ are maps that forget appropriate subspaces in the flag.
\end{itemize}

Let us first consider the construction \eqref{construction2:intro}.  While the construction \eqref{construction2:intro} may seem geometrically unnatural, since $\widetilde{Fl}(1,\br,n-1;n)$ is not a flag variety, it leads to the previously established notion of Las Vergnas' Tutte polynomials of morphisms of matroids, defined as follows.

\begin{defn}
Let $\MM = (M_1, M_2)$ be a two-step flag matroid on a ground set $[n]= \{1, \ldots, n\}$.  For $i = 1,2$, let us write $r_i$ for the rank of $M_i$, and $r_i(S)$ for the rank of $S\subseteq [n]$ in $M_i$.  The \textbf{Las Vergnas Tutte polynomial} of $(M_1, M_2)$ is a polynomial in three variables $x,y,z$ defined by
\[
LV\mathcal T_\MM(x,y,z) := \sum_{S \subseteq [n]} (x-1)^{r_1 - r_1(S)}(y-1)^{|S| - r_2(S)} z^{r_2 - r_2(S) - (r_1 - r_1(S))}.
\]
\end{defn}

Las Vergnas introduced this generalization of the Tutte polynomial in \cite{LV75}, and studied its properties in a series of subsequent works \cite{LV80, LV84, LV99, ELV04, LV07, LV13}.  Our first main theorem is a $K$-theoretic interpretation of the Las Vergnas Tutte polynomial.

\newtheorem*{thm:LVT}{\Cref{thm:LVT}}
\begin{thm:LVT}
Let $\MM = (M_1, M_2)$ be a flag matroid with $r_1 = \operatorname{rk}(M_1)$, $r_2 = \operatorname{rk}(M_2)$.  Let $\OO(0,1)$ be the line bundle on $Fl(r_1,r_2;n)$ of the map $Fl(r_1,r_2;n) \to Gr(r_2;n) \hookrightarrow \PP^{\binom{n}{r_2}-1}$, and let $\mathcal S_2/\mathcal S_1$ be the vector bundle on $Fl(r_1,r_2;n)$ whose fiber over a point $(L_1,L_2)\in Fl(r_1,r_2;n)$ is $L_2/L_1$.  Then,
\[
LV\mathcal T_{\MM}(\alpha, \beta, w) = \sum_{m=0}^{r_2 -r_1} ({\widetilde \pi}_{(n-1)1})_* {\widetilde \pi}_\br^* \Big(  y(\MM) [\OO(0,1)][\bigwedge^m(\mathcal S_2/\mathcal S_1)] \Big)w^m
\]
as elements in $K^0((\PP^{n-1})^\vee\times \PP^{n-1})[w] \simeq \QQ[\alpha,\beta,w]/(\alpha^n, \beta^n)$.
\end{thm:LVT}

Let us now consider the construction \eqref{construction1:intro}.  It leads to the following different generalization of the Tutte polynomial, first defined in the review \cite{CDMS20}.

\newtheorem*{defn:KTdefn}{\Cref{defn:KTdefn}}
\begin{defn:KTdefn}
Let $\MM$ be a flag matroid of rank $\br = (r_1, \ldots, r_k)$ on $\{1, \ldots, n\}$, and let $\OO(\mathbf 1)$ be the line bundle of the embedding $Fl(\br;n) \hookrightarrow Gr(r_1;n) \times \cdots \times Gr(r_k;n) \hookrightarrow \PP^{\binom{n}{r_1}-1}\times \cdots\times \PP^{\binom{n}{r_k}-1}$. The \textbf{flag-geometric Tutte polynomial} of $\MM$, denoted $\KT_\MM(x,y)$, is the unique bivariate polynomial in $x,y$ of bi-degree at most $(n-1,n-1)$ such that
\[
\KT_\MM(\alpha, \beta)  =  (\pi_{(n-1)1})_* \pi_r^* \Big( y (\MM) \cdot [\OO(\mathbf 1)]\Big) \in K((\PP^{n-1})^\vee\times \PP^{n-1})\simeq \QQ[\alpha,\beta]/(\alpha^n, \beta^n).
\]
\end{defn:KTdefn}

While the construction \eqref{construction1:intro} may be more geometrically natural than the construction \eqref{construction2:intro}, we show that the flag-geometric Tutte polynomial $\KT_\MM$ fails to display the two characteristic combinatorial properties of the usual Tutte polynomial that the Las Vergnas Tutte polynomial satisfies.

\medskip
The first property is the "corank-nullity formula": 
Both the usual Tutte polynomial and the Las Vergnas Tutte polynomial can be expressed as a summation over all subsets of the ground set, with terms involving coranks and nullities of subsets.  As a result, for a matroid $M$ or a flag matroid $(M_1,M_2)$ on $\{1, \ldots, n\}$, one has $T_M(2,2) = 2^n$ and $LV\mathcal T_{(M_1, M_2)}(2,2,1) = 2^n$.  We show that the value of $\KT_\MM(2,2)$ is more intricate.

\newtheorem*{thm:KT22}{Theorem \ref{thm:KT22}}
\begin{thm:KT22}
Let $\MM$ be a two-step flag matroid $\MM = (M_1,M_2)$ on a ground set $[n] = \{1, \ldots, n\}$.  Let $p\mathcal B(\MM)$ be the set of subsets $S\subseteq [n]$ such that $S$ is spanning in $M_1$ and independent in $M_2$.  Then with $q$ as a formal variable, we have  
\[
 \KT_\MM(1+q^{-1},1+q) = q^{-r_2} \cdot (1+q)^n \cdot \Big(\sum_{S \in p\mathcal B(\MM)} q^{|S|}\Big),\quad \textnormal{in particular,}\quad \KT_\MM(2,2) =  2^n\cdot |p\mathcal B(\MM)|.
\]
\end{thm:KT22}

The second property is the deletion-contraction recursion that the Tutte polynomial and the Las Vergnas Tutte polynomial both satisfy.  Unlike them, the flag-geometric Tutte polynomial $\KT_\MM$ does not satisfy the usual deletion-contraction recursion.
We instead show the following deletion-contraction-like relation.

\newtheorem*{thm:delcont}{\Cref{thm:delcont}}
\begin{thm:delcont}
Let $M$ be a matroid on a ground set $\{0,1,\ldots, n\}$ such that the element $0$ is neither a loop nor a coloop in $M$.  Then we have
\[
\KT_{(M,M)}(x,y) =  \KT_{(M/0,M/0)}(x,y) + \KT_{(M/0,M\setminus 0)}(x,y)  + \KT_{(M\setminus 0, M\setminus 0)}(x,y).
\]
\end{thm:delcont}

\medskip
The three main theorems \Cref{thm:LVT}, \Cref{thm:KT22}, and \Cref{thm:delcont} are obtained by proving stronger torus-equivariant versions of the statements.  The resulting torus-equivariant statements are then reduced to computing certain summations of lattice point generating functions, techniques for which we review, extend, and specialize in Section~\S\ref{section:latticegenfct}.  Our main contribution here is \Cref{thm:sumOfCones}, which serves as a key technical tool in this paper and may be of independent interest in the study of lattice polyhedra.

The reduction to lattice point generating functions is done via the method of equivariant localization, reviewed in Section~\S\ref{subsection:GKZ}, aided by certain push-pull computations in Section~\S\ref{section:fundcomp}.  Section ~\S\ref{section:prelim} is largely a summary of a more detailed account \cite[\S8]{CDMS20} on flag matroids and the torus-equivariant $K$-theory of flag varieties.  We discuss some future directions in Section~\S\ref{section:future}.

\subsection{Computation}\label{subsection:computer}
At \url{https://github.com/chrisweur/kTutte}, the reader can find a Macaulay2 code for computations with torus-equivariant $K$-classes and flag matroids.  In particular, it computes the polynomials $LV\mathcal T_{\MM}$ and $\KT_\MM$ and their torus-equivariant versions.

\subsection{Notation} Throughout we set $[n] := \{1,\ldots, n\}$.  For $i=1,\ldots, n$, we set $\be_i$ to be the standard coordinate vector in $\RR^n$ (or $\CC^n$), and write $\be_S:=\sum_{i\in S} \be_i$ for a subset $S\subseteq [n]$.  Let $\langle \cdot, \cdot \rangle$ be the standard inner product on $\RR^n$.  Cardinality of a set $S$ is denoted by $|S|$, and disjoint unions by $\sqcup$.  A variety is a reduced and irreducible proper scheme over $\CC$.

\section{Preliminaries: flag matroids and their $K$-classes on flag varieties}\label{section:prelim}

Here we review flag matroids and their (torus-equivariant) $K$-classes on flag varieties.  Most of the material in this section is described in more detail in the review \cite{CDMS20}.

\subsection{Matroid quotients and flag matroids}\label{subsection:flagmat}

We assume familiarity with the fundamentals of matroid theory, and point to \cite{Oxl11,Whi86, Wel76} as references.  We write $U_{r,n}$ for the uniform matroid of rank $r$ on $[n]$.  For a linear subspace $L\subseteq \CC^n$, let $M(L)$ denote the linear matroid whose ground set is the image of $\{\be_1, \ldots, \be_n\}$ under the dual map $\CC^n \twoheadrightarrow L^\vee$.  For a matroid $M$ on a ground set $[n]$ we set:
\begin{itemize}
\item $\operatorname{rk}_M: 2^{[n]} \to \ZZ$ to be the rank function of $M$, with $r(M) := \operatorname{rk}_M([n])$,
\item $M|S$, $M \setminus S$ and $M/S$ to be the restriction to, the deletion of, and the contraction by a subset $S\subseteq [n]$ (respectively),
\item $\mathcal B(M)$ to be the set of bases of $M$, and
\item $Q(M) \subset \RR^n$ to be the base polytope of $M$, which is the convex hull of $\{\be_B \mid B\in \mathcal B(M)\}$.
\end{itemize}

In this paper, by \textbf{morphisms of matroids} we will mean matroid quotients, as defined below\footnote{The behavior of a morphism of matroids, in a more general sense of \cite{EH20} or \cite{HP18}, is largely governed by an associated matroid quotient \cite[Lemma 2.4]{EH20}.}.  They are combinatorial abstractions of the graph homomorphisms, linear maps, and graphs embedded on surfaces; see \cite{EH20} for illustrations of these examples.

\begin{defn} \label{defn:matroidquotient}
Let $M_1, M_2$ be two matroids on a common ground set $[n]$.  We say that $M_1$ is a \textbf{matroid quotient} of $M_2$, written $M_1 \twoheadleftarrow M_2$, if any of the following equivalent conditions are met \cite[Proposition 7.4.7]{Bry86}:
\begin{enumerate}
\item every circuit of $M_2$ is a union of circuits of $M_1$,
\item \label{item:matroidquotientrank} $\operatorname{rk}_{M_2}(B) - \operatorname{rk}_{M_2}(A) \geq \operatorname{rk}_{M_1}(B) - \operatorname{rk}_{M_1}(A)$ for any $A\subseteq B \subseteq [n]$,
\item there exists a matroid $N$ on a ground set $[n]\sqcup S$ with $|S| = r(M_2) - r(M_1)$ such that $M_1 = N/S$ and $M_2 = N\setminus S$.
\end{enumerate}
\end{defn}

\begin{eg} Matroid quotients are combinatorial abstractions of linear maps of maximal rank.  An inclusion of linear subspaces $L_1 \hookrightarrow L_2 \subseteq \CC^n$, or equivalently a quotient $\CC^n \twoheadrightarrow L_2^\vee \twoheadrightarrow L_1^\vee$, defines matroids $M(L_1)$ and $M(L_2)$, which form a matroid quotient $M(L_1) \twoheadleftarrow M(L_2)$.
\end{eg}

\begin{eg}[Canonical matroid quotients] Just as any linear space $L$ has two canonical linear maps, the identity $L \to L$ and the zero map $L \to 0$, any matroid $M$ has two canonical matroid quotients, the identity $M\twoheadrightarrow M$ and the trivial quotient $M \twoheadrightarrow U_{0,n}$.
\end{eg}

A matroid quotient $M_1\twoheadleftarrow M_2$ is an \textbf{elementary quotient} if $r(M_2) - r(M_1) = 1$.  Every matroid quotient $M_1\twoheadleftarrow M_2$ can be realized as a composition of a series of elementary quotients.  A canonical one is given by the \textbf{Higgs factorization} $M_1 = M^{(r_2 - r_1)}\twoheadleftarrow \cdots\twoheadleftarrow M^{(1)} \twoheadleftarrow M^{(0)} = M_2$, defined by $\mathcal B(M^{(i)}) = \{S \subseteq [n] \mid |S| = r(M_2)-i,\ S \textnormal{ spans $M_1$ and is independent in } M_2\}$.  The subsets $S\subseteq [n]$ that span $M_1$ and are independent in $M_2$ are called \textbf{pseudo-bases} of $(M_1, M_2)$. The set of pseudo-bases of $\MM=(M_1,M_2)$ is denoted by $p \mathcal{B}(\MM)$.
For a more on matroid quotients, we refer the reader to \cite[\S7.4]{Bry86} or \cite[\S7.3]{Oxl11}.

\begin{defn}
A \textbf{flag matroid} is a sequence of matroids $\MM = (M_1, \ldots, M_k)$\footnote{We remark that, unlike \cite{BGW03} but in agreement with \cite{EH20} and \cite{CDMS20}, we allow repetition of matroids in the sequence of matroids that constitute a flag matroid.} on a ground set $[n]$ such that $M_i \twoheadleftarrow M_{i+1}$ for all $i = 1, \ldots, k-1$.  The matroids $M_i$ are \textbf{constituents} of $\MM$, and the \textbf{rank} of $\MM$ is the sequence of ranks of its constituents $(r(M_1), \ldots, r(M_k))$.  The set of \textbf{bases} of $\MM$, denoted $\mathcal B(\MM)$, is the set of all $k$-flags of subsets $(B_1 \subseteq B_2 \subseteq \cdots \subseteq B_k)$ such that $B_i \in \mathcal B(M_i)$.
\end{defn}

\begin{eg}[Linear flag matroids] \label{eg:linearflag} A sequence of matroids $(M(L_1), \ldots, M(L_k))$ defined by a flag $\mathbf L$ of linear subspaces $L_1 \subseteq \cdots \subseteq L_k \subseteq \CC^n$ is a flag matroid.  We denote this flag matroid by $M(\mathbf L)$.  Flag matroids arising in this way are called linear (or realizable) flag matroids.
\end{eg}

For $\SS=(S_1,\ldots,S_k)$ a flag of subsets of $[n]$, we write $\be_{\SS} = \be_{S_1}+\cdots+\be_{S_k}$.  The \textbf{base polytope} $Q(\MM)$ of a flag matroid $\MM$ is the convex hull of $\{\be_{\BB} \mid \BB \in \mathcal B(\MM)\}$, whose vertices are in bijection with the bases of $\MM$.
The polytope $Q(\MM)$ is also the Minkowski sum of the base polytopes $Q(M_i)$ of the constituents of $\MM$.  The classical theorem of Gelfand, Goresky, MacPherson, and Serganova \cite{GGMS87} a characterizes base polytopes of matroids.  The analogue for flag matroids holds:

\begin{thm}\cite[Theorem 1.11.1]{BGW03} \label{thm:flagMatroidBasePolytope}
A lattice polytope $P \subset \RR^n$ is the base polytope of a rank $(r_1,\ldots,r_k)$ flag matroid on $[n]$ if and only if the following two conditions hold:
\begin{enumerate}
\item every vertex of $P$ is a $\mathfrak S_n$-permutation of $\be_{\{1,2,\ldots, r_1\}} + \cdots + \be_{\{1,2,\ldots, r_k\}}$, and
\item every edge of $P$ is parallel to $\be_i-\be_j$ for some $i,j \in [n]$.
\end{enumerate}
In particular, the normal fan of the base polytope $Q(\MM)$ of a flag matroid is a coarsening of the braid arrangement, which is the normal fan of the zonotope $\sum_{1\leq i < j \leq n} \operatorname{Conv}(\be_i, \be_j)$.
\end{thm}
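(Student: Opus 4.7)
The plan is to prove the two directions separately, with the forward direction being routine and the reverse direction forming the heart of the argument.

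\textbf{Forward direction.} Suppose $P = Q(\MM)$ with $\MM = (M_1, \ldots, M_k)$ and $r_i = r(M_i)$. By definition, the vertices of $Q(\MM)$ are $\be_\BB = \sum_i \be_{B_i}$ for bases $\BB = (B_1 \subseteq \cdots \subseteq B_k)$ with $B_i \in \mathcal{B}(M_i)$; since $|B_i| = r_i$, the multiset of entries of $\be_\BB$ matches that of $\be_{\{1,\ldots,r_1\}} + \cdots + \be_{\{1,\ldots,r_k\}}$, giving (1). For (2), I would invoke the Minkowski decomposition $Q(\MM) = Q(M_1) + \cdots + Q(M_k)$ stated just before the theorem, together with the classical matroid GGMS theorem applied to each summand, which ensures each $Q(M_i)$ has edges parallel to $\be_a - \be_b$; edges of a Minkowski sum are parallel to edges of the summands, so they remain root directions.

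\textbf{Reverse direction.} Given $P$ satisfying (1) and (2), the plan is to recover the constituents as follows. Condition (1) forces every vertex $v$ of $P$ to have entries in $\{0, 1, \ldots, k\}$ with a fixed multiset. Setting $B_i(v) := \{j \in [n] : v_j \geq k - i + 1\}$ then yields a chain $B_1(v) \subseteq \cdots \subseteq B_k(v)$ with $|B_i(v)| = r_i$ and $v = \sum_{i=1}^k \be_{B_i(v)}$. Let $\mathcal B_i := \{B_i(v) : v \in \mathrm{vert}(P)\}$ and $P_i := \mathrm{Conv}\{\be_B : B \in \mathcal B_i\}$. The central claim is that each $P_i$ is a matroid base polytope, that the resulting matroids $M_i$ form a flag matroid $\MM$, and that $P = Q(\MM)$.

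The technical crux is the GGMS edge condition for each $P_i$. The key observation is that if $v - v' = c(\be_a - \be_b)$ is an edge of $P$ with $c \geq 1$, the fixed multiset forces $v$ to be the swap of $v'$ at positions $a,b$; this alters precisely $c$ of the sets $B_i(\cdot)$, each by the single-element exchange $\{b\} \leftrightarrow \{a\}$, while leaving the other $B_i(\cdot)$ fixed. Using this, one shows that any edge of $P_i$ is itself a single-swap edge, hence parallel to a root. Once the $M_i$ are known to be matroids, the quotient relations $M_i \twoheadleftarrow M_{i+1}$ follow directly from $B_i(v) \subseteq B_{i+1}(v)$ by checking the rank inequality of Definition \ref{defn:matroidquotient}(2) via comparisons of support functions of $P_i, P_{i+1}$ at generic weights. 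The equality $P = Q(\MM)$ then holds since both polytopes have the same vertex set.

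The main obstacle is that $v \mapsto \be_{B_i(v)}$ is not linear, so edges of $P_i$ are not literal images of edges of $P$; bridging this gap --- showing that whenever $\be_B, \be_{B'}$ span an edge of $P_i$, the sets $B, B'$ are related by a single exchange --- is the delicate step. I would expect to handle it by a combination of a generic-weight argument (to align maximizers on $P$ and $P_i$) and an induction on $k$, reducing the multi-layer analysis to the already-known matroid case.
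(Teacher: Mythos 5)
The paper itself offers no proof of this theorem --- it is quoted from \cite{BGW03} --- so your argument must stand on its own. Your forward direction is fine, granting (as the paper does) the Minkowski decomposition $Q(\MM)=Q(M_1)+\cdots+Q(M_k)$ and the classical GGMS theorem for each summand. In the reverse direction your architecture is right and your key combinatorial observation (an edge of $P$ in direction $c(\be_a-\be_b)$ swaps the $a$- and $b$-entries and induces a single exchange in exactly $c$ of the sets $B_i(\cdot)$) is correct. But the two steps you defer are precisely where the content lies, and the plan you sketch for the first does not close the circle. To ``align maximizers on $P$ and $P_i$ at a generic weight $w$'' you would need to know that if $v$ is the $w$-optimal vertex of $P$ then $\be_{B_i(v)}$ is $w$-optimal in $P_i$; that is essentially the assertion $P=P_1+\cdots+P_k$, which is equivalent to what you are trying to prove rather than an available tool. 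The argument that actually works is a greedy/path one: for generic $w$, condition (2) forces every non-$w$-optimal vertex of $P$ to admit an improving edge in a root direction, and a careful analysis of which exchanges can improve $w$ shows that $B_i(v)$ is the $w$-greedy element of $\mathcal B_i$ simultaneously for all $i$; this is the Gelfand--Serganova step and it needs to be written out. An induction on $k$ does not obviously substitute for it, since the collections $\mathcal B_i$ interact only through $P$.

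Separately, your closing claim that ``$P=Q(\MM)$ holds since both polytopes have the same vertex set'' is unjustified as stated. By the paper's definition, $\mathcal B(\MM)$ consists of \emph{all} nested flags $(B_1\subseteq\cdots\subseteq B_k)$ with $B_i\in\mathcal B_i$, whereas the vertices of $P$ only yield flags of the special form $(B_1(v),\ldots,B_k(v))$ for a single vertex $v$. A priori a compatible flag could pair $B_1(v)$ with $B_2(v')$ for distinct vertices $v,v'$, and the point $\sum_i\be_{B_i}$ need not lie in $P$; you get $P\subseteq Q(\MM)$ for free but the reverse inclusion is a genuine step, again resolved only by the Minkowski/greedy analysis above. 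Likewise, deducing the rank inequality of \Cref{defn:matroidquotient}(2) from nestedness of greedy bases at all generic weights is true but is a lemma requiring proof, not something that ``follows directly.''
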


Consequently, every face of a base polytope of a flag matroid is again a base polytope of a flag matroid.  The faces can be described explicitly.  For $u \in \RR^n$ and a polytope $Q\subset \RR^n$, let $Q^{u} := \{x\in Q \mid \langle x, u\rangle  = \max_{y\in Q} \langle y, u \rangle \}$ be the face maximizing in the direction of $u$.

\begin{prop}\label{prop:face}
Let $\MM = (M_1, \ldots, M_k)$ be a flag matroid on $[n]$ or rank $\br = (r_1, \ldots, r_k)$, and let $\SS = S_1 \subseteq \cdots \subseteq S_m$ be a flag of subsets of $[n]$.  Then $Q(\MM)^{\be_\SS}$ is the base polytope of a flag matroid whose $i$-th constituent (for $i = 1,\ldots, k$) is 
\[
M_i|S_1 \oplus M_i|S_2/S_1 \oplus \cdots \oplus M_i|S_m/S_{m-1} \oplus M_i/S_m.
\]
In other words, the bases of the flag matroid of $Q(\MM)^{\be_\SS}$ are bases $\BB = (B_1, \ldots, B_k)$ of $\MM$ such that $\operatorname{rk}_{M_i}(S_j) = |B_i \cap S_j|$ for all $1\leq i \leq k$ and $1\leq j \leq m$.
\end{prop}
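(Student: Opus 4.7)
The plan is to combine the Minkowski-sum description $Q(\MM) = Q(M_1) + \cdots + Q(M_k)$ (stated just before the proposition) with the elementary fact that taking the face $Q^u$ in a fixed direction $u$ commutes with Minkowski sum, i.e.\ $(P+P')^u = P^u + (P')^u$. Applied to $u = \be_\SS$ this gives
\[
Q(\MM)^{\be_\SS} = Q(M_1)^{\be_\SS} + \cdots + Q(M_k)^{\be_\SS}.
\]
It therefore suffices to (i) identify $Q(M)^{\be_\SS}$ for a single matroid $M$, and (ii) verify that the resulting sequence of matroids is itself a flag matroid, so that the Minkowski sum on the right is the base polytope of that flag matroid.

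For (i), any basis $B$ of $M$ satisfies $\langle \be_B, \be_\SS\rangle = \sum_j |B\cap S_j| \leq \sum_j \operatorname{rk}_M(S_j)$, with equality iff $|B\cap S_j| = \operatorname{rk}_M(S_j)$ for all $j$. An induction on $j$ shows that this condition is equivalent to $B$ being a disjoint union of a basis of $M|S_1$, of $M|S_2/S_1$, \ldots, of $M|S_m/S_{m-1}$, and of $M/S_m$; equivalently, to $B$ being a basis of
\[
M' := M|S_1 \oplus M|S_2/S_1 \oplus \cdots \oplus M|S_m/S_{m-1} \oplus M/S_m.
\]
Hence $Q(M)^{\be_\SS} = Q(M')$. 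For (ii), defining $M_i'$ by this recipe applied to each $M_i$, I would verify that $M_i' \twoheadleftarrow M_{i+1}'$ for every $i$. This reduces to checking that the matroid quotient relation is preserved by restriction, by contraction, and by direct sum, each of which is straightforward from the rank inequality \eqref{item:matroidquotientrank} of \Cref{defn:matroidquotient}. Applying the Minkowski-sum description to the flag matroid $\MM' := (M_1', \ldots, M_k')$ then closes the loop:
\[
Q(\MM') = \sum_i Q(M_i') = \sum_i Q(M_i)^{\be_\SS} = Q(\MM)^{\be_\SS}.
\]

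The ``in other words'' claim about the bases of $\MM'$ is immediate: the vertices of $Q(\MM)^{\be_\SS}$ are those vertices $\be_\BB$ of $Q(\MM)$ for which $\langle \be_\BB, \be_\SS\rangle = \sum_{i,j} |B_i\cap S_j|$ attains the maximum value $\sum_{i,j} \operatorname{rk}_{M_i}(S_j)$, i.e.\ those $\BB\in\mathcal{B}(\MM)$ with $|B_i\cap S_j|=\operatorname{rk}_{M_i}(S_j)$ for all $i,j$. The main obstacle is not conceptual but the bookkeeping in (ii); in particular, checking that the quotient relation descends to each piece $M_i|S_j/S_{j-1} \twoheadleftarrow M_{i+1}|S_j/S_{j-1}$ requires applying \eqref{item:matroidquotientrank} to pairs $A\subseteq B$ with $S_{j-1}\subseteq A\subseteq B \subseteq S_j$, after which the direct sums of pairwise quotients are handled additively.
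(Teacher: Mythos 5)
Your proposal is correct and follows essentially the same route as the paper: reduce to a single matroid via the fact that taking the face in direction $u$ commutes with Minkowski sums, then identify $Q(M)^{\be_\SS}$ via the greedy/rank argument. Your extra step (ii), verifying that the constituents $M_i'$ again form a flag matroid, is a fine way to close the argument, though the paper gets this for free from \Cref{thm:flagMatroidBasePolytope}, which already guarantees that every face of a flag matroid base polytope is a flag matroid base polytope.
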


\begin{proof}
Note that if $Q = \sum_{i=1}^k Q_i$ is a Minkowski sum of polytopes, then for any $u\in \RR^n$, the face $Q^u$ is the Minkowski sum $\sum_{i=1}^k Q_i^u$ of faces.  The proof of the proposition is thus reduced to the case of $\MM$ being a matroid $M$.  In this case, the statement is an immediate consequence of the greedy algorithm structure for matroids.
\end{proof}

\subsection{Torus-equivariant $K$-theory of flag varieties}\label{subsection:GKZ}
We will study combinatorial properties of flag matroids through the geometry of (partial) flag varieties and their (torus-equivariant) $K$-theory.  We point to \cite[\S8]{CDMS20} or \cite[\S2]{FS10} (and references therein) for a detailed exposition of equivariant $K$-theory of flag varieties.

\medskip
We begin by describing torus-equivariant $K$-theory and the method of localization.  Let $T = (\CC^*)^n$, and write $\ZZ[\mathbf t^\pm] := \ZZ[t_1^\pm, \ldots, t_n^\pm] = \ZZ[\ZZ^{[n]}]$ for the \textbf{character ring} of $T$.  Let $X$ be a smooth variety with a $T$-action, and let $\mathcal E$ be a ($T$-equivariant) vector bundle on $X$.  We write:
\begin{itemize}
\item $K^0(X)$ for the Grothendieck ring of vector bundles on $X$, which is isomorphic to the Grothendieck group of coherent sheaves $K_0(X)$ since $X$ is smooth,
\item $K^0_T(X)$ for the $T$-equivariant Grothendieck ring,
\item $[\mathcal E] \in K^0(X)$ for the $K$-class of $\mathcal E$ and $[\mathcal E]^T\in K^0_T(X)$ for its $T$-equivariant $K$-class,
\item $f_*$ for the (derived) pushforward map and $f^*$ for the pullback map of $K$-classes along a proper map $f: X\to X'$ of smooth varieties,
\item $\chi$ for the pushforward along the structure map $X \to \operatorname{Spec} \CC$, and
\item $\chi^T$ for the $T$-equivariant pushforward to $K_T^0(pt) = \ZZ[\mathbf t^\pm]$, the Lefschetz trace \cite[\S4]{Nie74}.
\end{itemize}

\medskip
We now assume that $X$ has finitely many $T$-fixed points, denoted $X^T$, and finitely many 1-dimensional $T$-orbits.  Moreover, we assume $X$ to be equivariantly formal and contracting, the precise definitions of which can be found in \cite[Remark 8.5 \& Definition 8.6]{CDMS20}.  Examples of such $X$ include flag varieties and smooth toric varieties.  By definition, for each $T$-fixed point $x\in X^T$, there is a $T$-invariant affine neighborhood $U_x \simeq \AA^{\dim X}$ whose characters $\{\lambda_1(x), \ldots, \lambda_{\dim X}(x)\}\subset \ZZ^n$ generate a pointed semigroup.  Fundamental results from the method of equivariant localization are collected in the following theorem.

\begin{thm}\label{thm:localization}
Let $X$ be a equivariantly formal and contracting smooth $T$-variety with finitely many $T$-fixed points and finitely many 1-dimensional $T$-orbits.  Then:
\begin{enumerate}
\item \cite[Corollary 5.12]{VV03}, \cite[Corollary A.5]{KR03} 
(cf.\ \cite[Theorem 3.2]{Nie74}, \cite[Theorem 2.7]{Tho83})
The restriction map
\[
K^0_T(X) \hookrightarrow K^0_T(X^T) \simeq (\ZZ[\mathbf t^\pm])^{X^T},\qquad \epsilon \mapsto \epsilon(\cdot)
\]
is injective.  Moreover, an element $\epsilon(\cdot) \in (\ZZ[\mathbf t^\pm])^{X^T}$ is in the image if and only if for every one-dimensional $T$-orbit in $X$ with boundary points $x,y\in X^T$ in the closure, the function $\epsilon(\cdot): X^T \to \ZZ[\mathbf t^\pm]$ satisfies
\[
\epsilon(x) \equiv \epsilon(y) \mod 1 - \mathbf t^\lambda
\]
 where $\lambda$ is the character of the action of $T$ on the one-dimensional orbit.

\item \cite[Theorem 2.6]{FS10}, \cite[Theorem 8.34]{MS05} Let $\mathcal E$ be a $T$-equivariant coherent sheaf on $X$, and let $x\in X^T$.  The image $[\mathcal E]^T(x)$ of $[\mathcal E]^T$ under the restriction $K_T^0(X) \to K_T^0(x) \simeq \ZZ[\mathbf t^\pm]$ is $\mathcal K(\mathcal E(U_x);\mathbf t)$ where
\[
\Hilb(\mathcal E(U_x)) := \frac{\mathcal K(\mathcal E(U_x);\mathbf t)}{\prod_{i=1}^{\dim X} (1-\mathbf t^{-\lambda_i(x)})}
\]
is the multigraded Hilbert series of the $\OO_X(U_x)$-module $\mathcal E(U_x)$ \cite[Theorem 8.20]{MS05}.

\item \cite[Theorem 5.11.7]{CG10} (cf.\ \cite[\S4]{Nie74}) Let $f: X \to Y$ be a proper $T$-equivariant map of equivariantly formal, contracting, and smooth $T$-varieties, and let $\alpha \in K_T^0(X)$, $\beta \in K_T^0(Y)$.  Then we have
\[
(f^*\beta)(x) = \beta(f(x)) \textnormal{ for every } x\in X^T, \textnormal{ and}
\]
\[
(f_*\alpha)(y) = \left(\prod_{i=1}^{\dim Y}(1-\mathbf t^{-\lambda_i(y)})\right) \left(\sum_{x \in X^T \cap f^{-1}(y)} \frac{\alpha(x)}{\prod_{i=1}^{\dim X}(1-\mathbf t^{-\lambda_i(x)})}\right) \textnormal{ for every } y\in Y^T.
\]
\end{enumerate}
\end{thm}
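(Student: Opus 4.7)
The plan is to treat the three parts separately. Each is a standard result in $T$-equivariant $K$-theory, already established in the cited references under (slight variants of) our hypotheses of equivariant formality and a contracting fixed-point structure. I would not reprove them from scratch, but instead outline the structural ideas behind each and indicate where the hypotheses are used.

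For part (1), the approach is to use the Bia\l{}ynicki--Birula decomposition induced by the contracting one-parameter subgroup: this stratifies $X$ into the $T$-invariant affine charts $U_x \simeq \AA^{\dim X}$ indexed by $x \in X^T$. Equivariant formality then guarantees that the resulting filtration on $K^0_T(X)$ splits, realizing it as a free $\ZZ[\mathbf{t}^\pm]$-module with a basis in bijection with $X^T$; injectivity of the restriction map is immediate. The characterization of the image comes from restricting to each one-dimensional $T$-orbit closure, which is a $T$-invariant $\PP^1$ joining two fixed points $x,y$; the equivariant $K$-theory of such a $\PP^1$ is exactly pairs $(f_x,f_y) \in \ZZ[\mathbf{t}^\pm]^2$ with $f_x \equiv f_y \pmod{1-\mathbf{t}^\lambda}$, where $\lambda$ is the character of the $T$-action on the orbit.

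For part (2), the plan is to reduce to an affine computation. Since $U_x \simeq \AA^{\dim X}$ is $T$-equivariantly an affine space whose coordinate ring is polynomial on characters $-\lambda_i(x)$, the restriction $[\mathcal{E}]^T(x)$ agrees with the $K^0_T(\AA^{\dim X})$-class of the equivariant module $\mathcal{E}(U_x)$. Under the isomorphism $K^0_T(\AA^{\dim X}) \simeq \ZZ[\mathbf{t}^\pm]$, this class equals the numerator $\mathcal{K}(\mathcal{E}(U_x);\mathbf{t})$ in the displayed Hilbert-series identity, which is obtained by resolving $\mathcal{E}(U_x)$ by equivariant free modules (equivalently, by the equivariant Koszul identity $[\OO_{\{x\}}] = \prod_i (1-\mathbf{t}^{-\lambda_i(x)})$ in $K^0_T(\AA^{\dim X})$).

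For part (3), the pullback formula $(f^*\beta)(x) = \beta(f(x))$ is immediate from functoriality of restriction to $T$-fixed points. The pushforward formula is the equivariant Lefschetz--Riemann--Roch theorem: by the injectivity in part (1) it suffices to verify the identity at each $y \in Y^T$, where one recognizes the right-hand side as the standard Atiyah--Bott-type sum of local contributions $\alpha(x)/\prod_i(1-\mathbf{t}^{-\lambda_i(x)})$ over fixed points in the fiber, rescaled by the equivariant Euler class at $y$. The main conceptual step is part (1): once the GKM-style description and injectivity are in place, parts (2) and (3) are essentially computations in equivariant commutative algebra and localization.
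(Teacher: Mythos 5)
The paper offers no proof of this theorem at all: it is stated as a compilation of known results with citations to \cite{VV03}, \cite{KR03}, \cite{FS10}, \cite{MS05}, and \cite{CG10}, exactly as your proposal also ultimately defers to the literature. Your sketches of the underlying arguments (Bia\l{}ynicki--Birula/GKM description for (1), the affine Hilbert-series computation for (2), and equivariant Lefschetz--Riemann--Roch verified fixed-point-wise for (3)) are accurate and consistent with the standard proofs in those references, so this is correct and in line with the paper's treatment.
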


We now specialize our discussion of $K$-theory to flag varieties.
For a sequence of non-negative integers $\br = (r_1, \ldots, r_k)$ such that $0< r_1 \leq \cdots \leq r_k < n$, denote by $Fl(\br;n)$ the \textbf{flag variety}
\[
Fl(\br;n) := \{{\mathbf L} = (L_1 \subseteq \cdots \subseteq L_k \subseteq \CC^n) \textnormal{ linear subspaces with } \dim L_i = r_i \ \forall 1\leq i\leq k\}.
\]
For each $i = 1, \ldots, k$, we have the tautological sequence of vector bundles on $Fl(\br;n)$
\[
0\to \mathcal S_i \to \CC^n \to \mathcal Q_i \to 0
\]
where $\mathcal S_i$ is the ($i$-th) universal subbundle.  It is a vector bundle whose fiber at a point ${\mathbf L} \in Fl(\br;n)$ is the subspace $L_i$.  For $\mathbf a = (a_1, \ldots, a_k) \in \ZZ^k$ we denote by $\OO(\mathbf a)$ the line bundle $\bigotimes_{i=1}^k (\det \mathcal S_i^\vee)^{\otimes a_i}$, and by $\OO(\mathbf 1)$ the line bundle $\OO(1,1,\ldots, 1)$ on $Fl(\br;n)$.  The torus $T := (\CC^*)^n$ acts on $Fl(\br;n)$ by its action on $\CC^n$ where $(t_1, \ldots, t_n) \cdot (x_1, \ldots, x_n) = (t_1^{-1}x_1, \ldots, t_n^{-1}x_n)$.  With this $T$-action, a flag variety is a equivariantly formal and contracting space with the following structure:
\begin{itemize}
\item The $T$-fixed points $x_{\SS}$ of $Fl(\br;n)$ are flags of coordinate subspaces, which are in bijection with flags of subsets $S_1 \subseteq \cdots \subseteq S_k\subseteq [n]$ with $|S_i| = r_i$ for all $i = 1, \ldots, k$.
\item For a flag $\SS$, denote by $Ex(\SS)$ the set of $(i,j) \in [n]\times [n]$ such that $i\in S_\ell$ and $j\notin S_\ell$ for some $1\leq \ell \leq k$.  Then the set of characters of the $T$-neighborhood $U_{\SS}$ of $x_{\SS}$ is $\{\be_i - \be_j \mid (i,j)\in Ex(\SS)\}$.
\end{itemize}
The sign-convention we have adopted for the action of $T$ ensures that $T$ acts on the sections of $\mathcal S_i^\vee$ by positive characters.  For instance, we have $[\mathcal S_i^\vee]^T(x_\SS) = \sum_{j\in S_i} t_j$ and $[\mathcal Q_i^\vee]^T(x_\SS) = \sum_{j\in [n]\setminus S_i} t_j$, and moreover $[\bigwedge^p \mathcal S_i^\vee]^T(x_\SS) = \displaystyle \sum_{\tiny \begin{matrix} A \subseteq S_i \\ |A| = p \end{matrix}} \bt^{\be_A}$ and $ [\bigwedge^p \mathcal S_i]^T(x_\SS) = \displaystyle\sum_{\tiny \begin{matrix} A \subseteq S_i \\ |A| = p \end{matrix}} \bt^{-\be_A}$ (likewise for $\bigwedge^q \mathcal Q_i^\vee, \bigwedge^q \mathcal Q_i$).

\subsection{$K$-class of a flag matroid}\label{subsection:flagmatKclass}
Flag matroids enter into the $K$-theory of flag varieties as $T$-equivariant $K$-classes as follows.  Let $\MM$ be a flag matroid of rank $\br$ on a ground set $[n]$.  For $\BB$ a basis of $\MM$, define a polyhedral cone $\Cone_{\BB}(\MM):= \Cone(Q(\MM) - \be_{\BB}) \subset \RR^n$, also known as the tangent cone of $Q(M)$ at the vertex $\be_\BB$, and let $\Hilb_{\BB}(\MM)$ be the multigraded Hilbert series of $\CC[\mathbf t^\lambda \mid \lambda \in \Cone_{\BB}(\MM) \cap \ZZ^n]$ 
(see \cite[Theorem 8.20]{MS05}).

\begin{defn}\cite[Definition 8.19]{CDMS20}
Let $\MM$ be a flag matroid of rank $\br$ on a ground set $[n]$.  Then define $y(\MM)^T(\cdot)\in K^0_T(Fl(\br;n)^T)$ by
\[
y(\MM)^T(x_{\SS}) := \begin{cases}
\displaystyle \Hilb_{\SS}(\MM) \cdot \prod_{(i,j) \in Ex(\SS)} (1- t_i^{-1}t_j) & \textnormal{if $\SS$ a basis of $\MM$}\\
0 & \textnormal{otherwise}.
\end{cases}
\]
\end{defn}

By combining  \Cref{thm:localization}.(1) and \Cref{thm:flagMatroidBasePolytope}, one observes that $y(\MM)^T$ can be considered as a class in $K_T^0(Fl(\br;n))$ \cite[Proposition 8.20]{CDMS20}.  We will write $y(\MM)$ for the underlying non-equivariant $K$-class.  The geometric motivation for this $K$-class constitutes the remark below.

\begin{rem} Recall from \Cref{eg:linearflag} that a point ${\mathbf L} \in Fl(\br;n)$ defines a flag matroid $\MM := M({\mathbf L})$ of rank $\br$.  One observes that the torus-orbit closure $\overline{T\cdot {\mathbf L}}$ is isomorphic to the toric variety of the base polytope $Q(\MM)$, and then by applying \Cref{thm:localization}.(2) one shows that the class $[\OO_{\overline{T\cdot {\mathbf L}}}]^T \in K_T^0(Fl(\br;n))$ satisfies $[\OO_{\overline{T\cdot {\mathbf L}}}]^T(\cdot) = y(\MM)^T(\cdot)$.  See \cite[\S8.5]{CDMS20} for details.
\end{rem}

\begin{rem}\label{rem:valuative}
Let $\fP(\mathsf{FMat}_{\br;n})$ be a group generated by the indicator functions $\One(Q): \RR^n \to \RR$ of base polytopes $Q$ of rank $\br$ flag matroids on $[n]$.  A function $\varphi$ from the set of flag matroids of rank $\br$ on $[n]$ to an abelian group $A$ is \textbf{(strongly) valuative} if it factors through $\fP (\mathsf{FMat_{\br;n}})$.  As taking tangent cones and taking Hilbert series are valuative, it follows easily from the definition that the assignment $\MM \mapsto y(\MM)$ is valuative.
\end{rem}

When $\br = (r)$ (that is, we are concerned with the Grassmannian $Gr(r;n)$ and hence matroids of rank $r$ on $[n]$), invariants of a matroid $M$ built from $y(M)$ were explored in \cite{Spe09} and \cite{FS12} as follows.  To avoid confusion we write $\PP^{n-1}$ for $Gr(1;n)$ and $(\PP^{n-1})^\vee$ for $Gr(n-1;n)$.
Recall the diagram:
\begin{equation}\label{eqn:fouriermukai1}
\xymatrix{
& &Fl(1,r,n-1;n) \ar[ld]_{\pi_\br} \ar[rd]^{\pi_{(n-1)1}} &\\
&Gr(r;n) & & (\PP^{n-1})^\vee \times \PP^{n-1}.
}
\end{equation}
Let $\alpha$ be the $K$-class of the structure sheaf of a hyperplane in $(\PP^{n-1})^\vee$ and $\beta$ the likewise $K$-class from $\PP^{n-1}$.  We remark that our notation of $\alpha,\beta$ is flipped from the notation in \cite{FS12}\footnote{In \cite{FS12}, the authors consider $\pi_{1(n-1)}: Fl(1,r,n-1;n) \to \PP^{n-1}\times(\PP^{n-1})^\vee$, and set $\alpha$ and $\beta$ as the $K$-classes of the structure sheaves of hyperplanes from $\PP^{n-1}$ and $(\PP^{n-1})^\vee$ (respectively).  Our flipped naming of $\alpha,\beta$ is to remedy a minor error in the proof of \cite[Lemma 4.1]{FS12} (bottom three lines on pg.\ 2709), which accidentally flips the correspondence of $\alpha,\beta$ to appropriate $K$-classes.}.  Recall that $K^0((\PP^{n-1})^\vee \times \PP^{n-1}) \simeq \QQ[\alpha,\beta]/( \alpha^n, \beta^n)$.

\begin{thm}\label{thm:tuttematroid} \cite[Theorem 5.1]{FS12}
Let $M$ be a matroid of rank $r$ on $[n]$, and let $T_M(x,y)$ be its Tutte polynomial.  Then we have
\[
T_M(\alpha,\beta) = (\pi_{(n-1)1})_* \pi_r^* \Big( y (M) \cdot [\OO(1)]\Big).
\]
\end{thm}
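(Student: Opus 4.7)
The plan is to prove a $T$-equivariant refinement of the identity and apply equivariant localization (\Cref{thm:localization}). All objects in the statement admit natural $T$-equivariant lifts: $y(M)^T$ is already defined, $[\OO(1)]^T$ is the equivariant Plücker line bundle on $Gr(r;n)$, and the $K$-classes $\alpha,\beta$ on $(\PP^{n-1})^\vee\times\PP^{n-1}$ lift to the equivariant $K$-classes of specific $T$-invariant coordinate hyperplanes, yielding an equivariant Tutte polynomial $T_M^T$ that specializes to $T_M(\alpha,\beta)$ at trivial characters. By \Cref{thm:localization}(1), it then suffices to verify the equivariant identity after restriction to every $T$-fixed point of $(\PP^{n-1})^\vee\times \PP^{n-1}$, indexed by ordered pairs $(i,j)\in [n]\times [n]$ with $i\neq j$.

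Next I compute the restriction of the right-hand side at such a fixed point $p_{i,j}$ using the pushforward formula of \Cref{thm:localization}(3). The $T$-fixed preimages in $Fl(1,r,n-1;n)$ are the flags $\SS = (\{j\}\subseteq S\subseteq [n]\setminus\{i\})$ with $|S|=r$, and since $y(M)^T(x_\SS)=0$ for $S\notin\mathcal B(M)$, substituting the explicit form $y(M)^T(x_S) = \Hilb_S(M)\prod_{(a,b)\in Ex(S)}(1-t_a^{-1}t_b)$ together with $[\OO(1)]^T(x_S)=\bt^{\be_S}$ and cancelling the exchange-character factors common to $Ex(S)$ and $Ex(\SS)$, then absorbing the external weight product $\prod_{\lambda \in \Lambda_{ij}}(1-\bt^{-\lambda})$ at $p_{i,j}$, gives an explicit clean sum
\[
\sum_{\substack{S\in\mathcal B(M)\\ j\in S,\,i\notin S}} \Hilb_S(M)\cdot \bt^{\be_S}\cdot \prod_{a\in S}(1-t_a^{-1}t_i)\cdot \prod_{b\notin S}(1-t_j^{-1}t_b).
\]

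Finally, I match this with the restriction of $T_M^T$ at $p_{i,j}$. Heuristically, the factor $1-t_a^{-1}t_i$ plays the role of an equivariant $\alpha$ and $1-t_j^{-1}t_b$ that of $\beta$; the task is to reorganize the above sum as an equivariant corank-nullity expression of shape $\sum_{A\subseteq [n]}(\alpha^T-1)^{r-\rk_M(A)}(\beta^T-1)^{|A|-\rk_M(A)}$. This lattice-point identity is the main obstacle, since the Hilbert series $\Hilb_S(M)$ encodes nontrivial geometry of the tangent cone $\Cone_S(M)$ of $Q(M)$ at $\be_S$. The cleanest route is via valuativity (\Cref{rem:valuative}): both sides are manifestly valuative invariants of $M$ (the left-hand side by inspection of the corank-nullity formula, the right-hand side because $y(M)$ is valuative and pullback/pushforward are linear), so it suffices to verify the identity on a valuative generating family such as Schubert matroids, for which $Q(M)$ has a unique vertex, $\Cone_S(M)$ is a shifted simplicial cone, $\Hilb_S(M)$ factors into geometric series, and both sides collapse to a short, direct check. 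Specializing the resulting equivariant identity to trivial characters then recovers the stated theorem.
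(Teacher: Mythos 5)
This theorem is quoted from \cite{FS12}, but the paper reproves its mechanism in the more general setting of \Cref{thm:LVT}/\Cref{thm:equivLVT}: first \Cref{prop:fundcomp} converts the push--pull into the \emph{non-equivariant} Euler characteristics $\chi\big(y(M)[\OO(1)][\bigwedge^p\mathcal S][\bigwedge^q\mathcal Q^\vee]\big)$, then \Cref{thm:localization}(3) expresses their equivariant refinements as $\sum_{B}\Hilb(\Cone_B(M))\sum_{\pp\subseteq B}\sum_{\qq\subseteq [n]\setminus B}\bt^{\be_\pp+\be_\qq}u^{r-|\pp|}v^{|\qq|}$, and finally \Cref{thm:sumOfMatroidCones} extracts the coefficient of $\bt^{\be_S}$ to recover the corank--nullity formula. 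Your proposal departs from this in a way that creates a genuine gap at the very first step. The identification $K^0((\PP^{n-1})^\vee\times\PP^{n-1})\simeq\QQ[\alpha,\beta]/(\alpha^n,\beta^n)$, and hence the meaning of ``$T_M(\alpha,\beta)$,'' is intrinsically non-equivariant; the paper explicitly flags that \Cref{prop:fundcomp} is an identity in non-equivariant $K$-theory. If you fix equivariant lifts $\alpha^T,\beta^T$ as classes of coordinate hyperplanes, the claim $T_M(\alpha^T,\beta^T)=(\pi_{(n-1)1})_*\pi_r^*(y(M)^T[\OO(1)]^T)$ is strictly stronger than the theorem and is false in general: by \Cref{thm:equivLVT} (specialized to one constituent) the equivariant pushforward carries genuine $\bt$-dependence in every coefficient, namely $\sum_S\bt^{\be_S}u^{r-\rk(S)}v^{|S|-\rk(S)}$, which cannot equal a fixed polynomial with integer coefficients in two fixed classes of $K_T^0((\PP^{n-1})^\vee\times\PP^{n-1})$. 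The correct equivariant refinement must be phrased through the Euler characteristics $\chi^T(\cdots)$ (as in \Cref{defn:FSTutte}), after which the non-equivariant bridge back to $\alpha,\beta$ is exactly \Cref{prop:fundcomp}; your plan has no substitute for that bridge.

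The second gap is that the combinatorial heart of the argument --- matching the localization sum against the corank--nullity generating function --- is deferred rather than proved. Your fixed-point computation at $p_{i,j}$ is plausible in shape, but the proposed finish via valuativity and Schubert matroids does not go through as stated: a Schubert matroid polytope does not have a unique vertex (it has one vertex per basis), so $\Hilb_S(M)$ is not a single product of geometric series for all $S$ and neither side ``collapses to a short, direct check.'' Moreover the reduction silently requires the nontrivial fact that indicator functions of Schubert matroid polytopes span $\fP(\mathsf{Mat}_{r;n})$ (Derksen--Fink), and even granting that, the verification on Schubert matroids is essentially as hard as the general case. The tool the paper builds for precisely this step is \Cref{thm:sumOfCones}/\Cref{thm:sumOfMatroidCones}: writing the equivariant answer as a sum of translated tangent cones of $Q(M)$ and flipping cones in slices shows that the coefficient of $\bt^{\be_S}u^{r-i}v^{j}$ is $1$ exactly when $i=\rk_M(S)$ and $i+j=|S|$, and $0$ otherwise. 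Without either this or an equivalent lattice-point argument, the identity you call ``the main obstacle'' remains unproved.
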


\medskip
We will generalize this $K$-theoretic formulation of Tutte polynomials of matroids to flag matroids in two different ways in subsequent sections.  In both cases, similarly to \Cref{thm:tuttematroid}, the Tutte polynomials of flag matroids are formulated via diagrams like \eqref{eqn:fouriermukai1}, which we introduce in the next section.

\section{Two diagrams and a fundamental computation}\label{section:fundcomp}

The main goal of this section is to prove \Cref{prop:fundcomp}, which relates a pushforward of a pullback of $K$-classes to Euler characteristics of certain associated sheaves.  As this section is closely adapted from \cite[\S4]{FS12}, we only give sketches of proofs, save for the modified parts.

\medskip
Let $\br = (r_1, \ldots, r_k)$ be a sequence of non-negative integers.  For each $i = 1, \ldots, k$, recall that we have tautological bundles $\mathcal S_i$ and $\mathcal Q_i$ on $Fl(\br;n)$ fitting into the short exact sequences
\begin{equation}\label{eqn:SES}
0\to \mathcal S_i \to \CC^n \to \mathcal Q_i \to 0.
\end{equation}
For two vector bundles $\mathcal E, \mathcal F$ on $X = Fl(\br;n)$, we write $\pi: \operatorname{BiProj}(\mathcal E,\mathcal F)\to X$ for the bi-projectivization of the direct sum $\mathcal E \oplus \mathcal F$.  That is, $\operatorname{BiProj}(\mathcal E, \mathcal F) := \operatorname{Proj}(\operatorname{Sym}^\bullet \mathcal E )\times_X \operatorname{Proj}(\operatorname{Sym}^\bullet \mathcal F)$, so that for each point $x\in X$, the fiber $\pi^{-1}(x)$ is $\PP(\mathcal E_x^\vee) \times \PP(\mathcal F_x^\vee)$.  We consider the following two distinguished cases; note that the two cases are identical when $k=1$ (i.e.\ when $Fl(\br;n)$ is a Grassmannian  $Gr(r;n)$).

\begin{itemize}
\item $\operatorname{BiProj}(\mathcal S_1^\vee, \mathcal Q_k) \simeq Fl(1,\br,n-1;n)$.  In this case, we have maps:
\[ \label{construction1}\tag{$Fl$}
\xymatrix{
& &Fl(1,\br,n-1;n) \ar[ld]_{\pi_\br} \ar[rd]^{\pi_{(n-1)1}} &\\
&Fl(\br;n) & & (\PP^{n-1})^\vee \times \PP^{n-1}
}
\]
where $\pi_\br$ and $\pi_{(n-1)1}$ are given by forgetting the linear spaces of appropriate dimensions.
\item $\operatorname{BiProj}(\mathcal S_k^\vee, \mathcal Q_1) \simeq \widetilde{Fl}(1,\br,n-1;n)$ where $\widetilde{Fl}(1,\br,n-1;n)$ is a variety
\[ 
\widetilde{Fl}(1,\br,n-1;n) := \left\{\left.\begin{matrix}\textnormal{linear subspaces}\\ (\ell, L_1, \ldots, L_k, H)\end{matrix} \ \right| \ \begin{matrix}\textnormal{$\dim \ell = 1$, $\dim H = n-1$, $(L_1, \ldots, L_k) \in Fl(\br;n)$,} \\ \textnormal{ and $\ell \subseteq L_k$ and $L_1 \subseteq H$} \end{matrix}\right\}.
\]
In this case, we also have maps:
\[ \label{construction2}\tag{$\widetilde{Fl}$}
\xymatrix{
& &\widetilde{Fl}(1,\br,n-1;n) \ar[ld]_{\widetilde\pi_\br} \ar[rd]^{\widetilde\pi_{(n-1)1}} &\\
&Fl(\br;n) & & (\PP^{n-1})^\vee \times \PP^{n-1}
}
\]
where $\widetilde\pi_\br$ and $\widetilde\pi_{(n-1)1}$ are given by forgetting the linear spaces of appropriate dimensions.
\end{itemize}

\medskip
As before, let $\alpha = [\OO_{H_1}]$ be the $K$-class of the structure sheaf of a hyperplane in $(\PP^{n-1})^\vee$ and $\beta = [\OO_{H_2}]$ the likewise $K$-class from $\PP^{n-1}$.  The main statement of this section is as follows.

\begin{prop}\label{prop:fundcomp}
Let $\epsilon \in K^0(Fl(\br;n))$.  With $u$ and $v$ as formal variables, define polynomials
\[
R_\epsilon(u,v) := \sum_{p,q} \chi \Big( \epsilon\cdot [\bigwedge^p \mathcal S_k][\bigwedge^q \mathcal Q_1^\vee] \Big)u^pv^q \quad\textnormal{and}\quad \widetilde R_\epsilon(u,v) := \sum_{p,q} \chi \Big( \epsilon\cdot [\bigwedge^p \mathcal S_1][\bigwedge^q \mathcal Q_k^\vee] \Big)u^pv^q.
\]
Then we have the following identities in $K^0((\PP^{n-1})^\vee \times \PP^{n-1})$.
\[
R_\epsilon(\alpha-1,\beta-1) = (\pi_{(n-1)1})_*\pi_\br^*( \epsilon) \quad\textnormal{and}\quad \widetilde R_\epsilon(\alpha-1,\beta-1) = (\widetilde\pi_{(n-1)1})_*\widetilde\pi_\br^*( \epsilon). 
\]
\end{prop}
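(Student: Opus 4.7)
The plan is to adapt Fink and Speyer's proof of \Cref{thm:tuttematroid} (the $k=1$ case) by embedding both $Fl(1,\br,n-1;n)$ and $\widetilde{Fl}(1,\br,n-1;n)$ as regular zero loci inside the ambient product $Y := Fl(\br;n) \times (\PP^{n-1})^\vee \times \PP^{n-1}$, computing the structure-sheaf classes via Koszul resolutions, and concluding with the projection formula and flat base change.

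For the $Fl$ diagram, the conditions $\ell \subseteq L_1$ and $L_k \subseteq H$ realize $Fl(1,\br,n-1;n) \subseteq Y$ as the joint vanishing locus of the natural bundle maps $\mathcal L_\ell \to \mathcal Q_1$ and $\mathcal S_k \to \mathcal L_H$, where $\mathcal L_\ell := \OO_{\PP^{n-1}}(-1)$ and $\mathcal L_H := \OO_{(\PP^{n-1})^\vee}(1)$ are pulled back to $Y$. Equivalently, this is a section of the vector bundle
\[
\mathcal V := (\mathcal L_\ell^\vee \otimes \mathcal Q_1) \oplus (\mathcal S_k^\vee \otimes \mathcal L_H),
\]
whose rank $(n - r_1) + r_k$ equals $\dim Y - \dim Fl(1,\br,n-1;n)$, forcing the section to be regular.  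The Koszul complex therefore resolves $\iota_*\OO_{Fl(1,\br,n-1;n)}$, and expanding $\sum_{j\geq 0}(-1)^j [\bigwedge^j \mathcal V^\vee]$ using $\bigwedge^p(E \otimes L) = \bigwedge^p E \otimes L^{\otimes p}$ for a line bundle $L$, together with $[\mathcal L_\ell] = 1 - \beta$ and $[\mathcal L_H^\vee] = 1 - \alpha$, gives
\[
[\iota_* \OO_{Fl(1,\br,n-1;n)}] = \sum_{p,q} (\alpha-1)^p (\beta-1)^q \cdot [\bigwedge^p \mathcal S_k] \cdot [\bigwedge^q \mathcal Q_1^\vee] \quad \textnormal{in } K^0(Y).
\]

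To extract the desired pushforward, let $p_1 : Y \to Fl(\br;n)$ and $p_{23} : Y \to (\PP^{n-1})^\vee \times \PP^{n-1}$ be the projections, so that $\pi_\br = p_1 \circ \iota$ and $\pi_{(n-1)1} = p_{23} \circ \iota$.  The projection formula for $\iota$ gives
\[
(\pi_{(n-1)1})_* \pi_\br^*(\epsilon) = (p_{23})_*\big( p_1^*\epsilon \cdot [\iota_*\OO_{Fl(1,\br,n-1;n)}]\big),
\]
while flat base change along the trivial fibration $p_{23}$ reduces $(p_{23})_* p_1^*$ to $\chi(\cdot) \cdot 1$ on classes pulled back from $Fl(\br;n)$.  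Applying the projection formula along $p_{23}$ to factor the classes $(\alpha-1)^p(\beta-1)^q$ out of the pushforward and substituting the Koszul expansion then produces exactly $R_\epsilon(\alpha-1,\beta-1)$.  The $\widetilde{Fl}$ case is symmetric: the conditions $\ell \subseteq L_k$ and $L_1 \subseteq H$ realize $\widetilde{Fl}(1,\br,n-1;n)$ as the regular zero locus of a section of $(\mathcal L_\ell^\vee \otimes \mathcal Q_k) \oplus (\mathcal S_1^\vee \otimes \mathcal L_H)$, and the identical argument delivers $\widetilde R_\epsilon(\alpha-1,\beta-1)$.

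The only point deserving care is verifying that the joint section of $\mathcal V$ is genuinely regular, so that the Koszul complex is exact and not merely a complex.  This reduces to the dimension count $\dim Y - \dim Fl(1,\br,n-1;n) = (n-r_1) + r_k = \operatorname{rk}\mathcal V$; together with the fact that the two summands of $\mathcal V$ pull back from independent factors of $Y$, this forces the combined section to be regular.  Every other ingredient---the exterior-power identity for line bundle twists, the $K$-theoretic expressions for $\alpha$ and $\beta$, and the base-change reduction of $(p_{23})_* p_1^*$ to $\chi$---is routine and already appears in \cite{FS12}.  In effect, the whole proof is Fink--Speyer's Grassmannian computation, with the sole combinatorial adjustment that the sub/quotient bundles in the Koszul resolution are selected according to which end of the flag is constrained: $\mathcal S_k, \mathcal Q_1$ in the $Fl$ case and $\mathcal S_1, \mathcal Q_k$ in the $\widetilde{Fl}$ case.
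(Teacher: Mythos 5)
Your proof is correct, but it follows a genuinely different route from the one in the paper. The paper (closely following \cite[Lemma 4.1]{FS12}) never leaves the relative setting over $Fl(\br;n)$: it exploits the $\operatorname{BiProj}$ structure of the correspondences, computes $(\pi_\br)_*\pi_{(n-1)1}^*(\eta_2^\ell\eta_1^m)$ as $[\operatorname{Sym}^\ell\mathcal S_1^\vee\otimes\operatorname{Sym}^m\mathcal Q_k]$ via the cohomology of projective bundles, and then inverts this relation through the Whitney and Koszul power-series identities $\bigl(\sum_\ell[\bigwedge^\ell\mathcal S_i]t^\ell\bigr)\bigl(\sum_m[\bigwedge^m\mathcal Q_i]t^m\bigr)=(1+t)^n$ and $\bigl(\sum_\ell[\bigwedge^\ell\mathcal S_i]t^\ell\bigr)\bigl(\sum_m(-1)^m[\operatorname{Sym}^m\mathcal S_i]t^m\bigr)=1$, before comparing coefficients (\Cref{lem:fundcomp}). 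You instead compute the Fourier--Mukai kernel directly: you embed the correspondence as a complete intersection in $Y=Fl(\br;n)\times(\PP^{n-1})^\vee\times\PP^{n-1}$ and read off $[\iota_*\OO]$ from the Koszul resolution, after which only the projection formula and K\"unneth are needed. Your bundle, sign, and dimension bookkeeping all check out (in particular $\operatorname{rk}\mathcal V=(n-r_1)+r_k$ matches the codimension in the $Fl$ case, and $(n-r_k)+r_1$ in the $\widetilde{Fl}$ case), and the selection of $\mathcal S_k,\mathcal Q_1^\vee$ versus $\mathcal S_1,\mathcal Q_k^\vee$ comes out right. The trade-off: your argument yields the kernel class in $K^0(Y)$ once and for all and avoids the formal power-series manipulations, but it requires the geometric verification that the section of $\mathcal V$ is regular and that its scheme-theoretic zero locus is the reduced correspondence variety --- a standard but nontrivial check (codimension equal to rank on a smooth, hence Cohen--Macaulay, ambient variety gives regularity of the section; reducedness should still be noted, e.g.\ by a local computation or by observing that the zero scheme is generically reduced and Cohen--Macaulay). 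The paper's relative approach sidesteps this entirely, since the only geometric input is the projective-bundle pushforward formula.
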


When $k=1$, i.e.\ $Fl(\br;n)$ is a Grassmannian, \Cref{prop:fundcomp} reduces to \cite[Lemma 4.1]{FS12}.  We remark that, just as in \cite{FS12}, \Cref{prop:fundcomp} is an identity in the \emph{non}-equivariant $K$-theory.
The proof of \Cref{prop:fundcomp} is a minor modification of the proof of \cite[Lemma 4.1]{FS12}.  Here, as a lemma, we separate out and also fix a minor error in the part of the proof in \cite{FS12} that needs modification.

\begin{lem}\label{lem:fundcomp} Denote $\eta_1 := (1-\alpha)^{-1} = [\OO(1,0)]$ and $\eta_2 := (1-\beta)^{-1} = [\OO(0,1)]$, and let $t$ be a formal variable. Then the following identities hold in $K^0(Fl(\br;n))[[t]]$.
\[
\sum_p [\bigwedge^p \mathcal S_k]t^p = (1+t)^n (\pi_\br)_* \pi_{(n-1)1}^* \Big(\frac{1}{1+t\eta_1}\Big) \textnormal{ and }
\sum_q [\bigwedge^q \mathcal Q_1^\vee]t^q = (1+t)^n (\pi_\br)_* \pi_{(n-1)1}^* \Big(\frac{1}{1+t\eta_2}\Big).
\]
And likewise,
\[
\sum_p [\bigwedge^p \mathcal S_1]t^p = (1+t)^n (\widetilde\pi_\br)_* \widetilde\pi_{(n-1)1}^* \Big(\frac{1}{1+t\eta_1}\Big) \textnormal{ and }
\sum_q [\bigwedge^q \mathcal Q_k^\vee]t^q = (1+t)^n (\widetilde\pi_\br)_* \widetilde\pi_{(n-1)1}^* \Big(\frac{1}{1+t\eta_2}\Big).
\]
\end{lem}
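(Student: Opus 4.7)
The plan is to rewrite each right-hand side, after pulling back through $\pi_{(n-1)1}$, as the $\Lambda_t(\cdot)$-class of an explicit tautological bundle on $Fl(1,\br,n-1;n)$ (resp.\ $\widetilde{Fl}(1,\br,n-1;n)$); use the flag structure to peel off the desired $\Lambda_t(\mathcal S_k)$ or $\Lambda_t(\mathcal Q_1^\vee)$ factor via projection formula; and evaluate the remaining piece by a short projective-bundle computation. I will sketch only the first identity, since the other three follow by formally the same argument after swapping sub/quotient bundles and/or the two factors of the BiProj.

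The first step is to identify the pullbacks $\pi_{(n-1)1}^*\eta_i$. Under the convention in \S\ref{subsection:GKZ} that $\operatorname{Proj}(\operatorname{Sym}^\bullet\mathcal E)$ has fiber $\PP(\mathcal E^\vee)$, one has $\pi_{(n-1)1}^*\eta_1 = \OO(0,1)$ (the relative hyperplane class on the $\operatorname{Proj}(\operatorname{Sym}^\bullet\mathcal Q_k)$-factor of $\operatorname{BiProj}(\mathcal S_1^\vee,\mathcal Q_k)$) and $\pi_{(n-1)1}^*\eta_2 = \OO(1,0)$. From the tautological sequence $0 \to \mathcal H_0 \to \CC^n \to \OO(1) \to 0$ on $(\PP^{n-1})^\vee$, multiplicativity of $\Lambda_t$ gives $\Lambda_t(\mathcal H_0) = (1+t)^n/(1+t\eta_1)$, so pulling back yields
\[
(1+t)^n\,\pi_{(n-1)1}^*\bigl(1/(1+t\eta_1)\bigr) = \Lambda_t(\mathcal H),
\]
where $\mathcal H$ is the rank $n-1$ subbundle of $\CC^n$ on $Fl(1,\br,n-1;n)$ with fiber $H$. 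The inclusion $\pi_\br^*\mathcal S_k \hookrightarrow \mathcal H$ (from $L_k\subseteq H$) has cokernel $\mathcal K$ of rank $n-r_k-1$; inspection shows $\mathcal K$ is pulled back from the $\operatorname{Proj}(\operatorname{Sym}^\bullet\mathcal Q_k)$-factor, where it is the tautological sub in $0 \to \mathcal K \to \pi^*\mathcal Q_k \to \OO(0,1) \to 0$. Multiplicativity of $\Lambda_t$ together with projection formula then reduces the first identity to the claim $(\pi_\br)_*\Lambda_t(\mathcal K) = 1$.

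For this last claim I use flat base change along the other factor $\operatorname{Proj}(\operatorname{Sym}^\bullet\mathcal S_1^\vee) \to Fl(\br;n)$ (whose structure sheaf pushes to $\OO$) to reduce to a push along $\operatorname{Proj}(\operatorname{Sym}^\bullet\mathcal Q_k) \to Fl(\br;n)$. Substituting $\Lambda_t(\mathcal K) = \pi^*\Lambda_t(\mathcal Q_k)/(1+t\OO(0,1))$ and applying the projective-bundle formula $\pi_*\OO(0,j) = \operatorname{Sym}^j\mathcal Q_k$ for $j\geq 0$, the push collapses to $\Lambda_t(\mathcal Q_k)\cdot\Lambda_t(\mathcal Q_k)^{-1} = 1$ as a formal identity in $K^0(Fl(\br;n))[[t]]$. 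The remaining three identities are handled by the same template: for $\sum_q[\bigwedge^q\mathcal Q_1^\vee]t^q$ one dualizes the corresponding sequence on $\PP^{n-1}$ and runs the argument on the other BiProj factor; for the two $\widetilde{Fl}$ statements one simply replaces $(\mathcal S_1^\vee,\mathcal Q_k)$ by $(\mathcal S_k^\vee,\mathcal Q_1)$ throughout, which is the only difference between the two constructions. The main obstacle, and the reason the authors single this lemma out, is precisely the identification in step one: one must carefully pin down which of the two relative $\OO(1)$'s pulls back to $\eta_1$ versus $\eta_2$. This is exactly where \cite{FS12} had a flip in the correspondence between $\alpha,\beta$ and the appropriate $K$-classes (as noted in the paper's footnote), and getting it right is what drives the entire calculation.
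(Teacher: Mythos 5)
Your argument is correct and is essentially the paper's proof repackaged: the three ingredients you use --- multiplicativity of $\Lambda_t$ on the tautological exact sequences, the Koszul identity $\Lambda_t(\mathcal E)\cdot\sum_j(-1)^j[\operatorname{Sym}^j\mathcal E]t^j=1$, and the projective-bundle pushforward $\pi_*\OO(j)=[\operatorname{Sym}^j(\cdot)]$ (equivalently, your identification of $\pi_{(n-1)1}^*\eta_i$ with the relative hyperplane classes of the two BiProj factors) --- are exactly the paper's equations \eqref{eqn:lem1}, \eqref{eqn:lem2}, and \eqref{eqn:lem3}. The only difference is bookkeeping: the paper expands $1/(1+t\eta_1)$ as a geometric series and pushes termwise to land on $\Lambda_t(\mathcal Q_k)^{-1}$ downstairs, whereas you split off $\Lambda_t(\pi_\br^*\mathcal S_k)$ upstairs and show the complementary factor pushes to $1$; both reduce to the same relation $(1+t)^n = \Lambda_t(\mathcal S_k)\Lambda_t(\mathcal Q_k)$.
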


\begin{proof}
For each $i = 1, \ldots, k$ note that
\begin{equation}\label{eqn:lem1}
\Big(\sum_\ell [\bigwedge^\ell \mathcal S_i]t^\ell\Big) \Big(\sum_m [\bigwedge^m \mathcal Q_i]t^m \Big) = (1+t)^n,
\end{equation}
which follows from the short exact sequence \eqref{eqn:SES} and \cite[A2.2.(c)]{Eis95}.  We also have an identity
\begin{equation}\label{eqn:lem2}
\Big(\sum_\ell [\bigwedge^\ell \mathcal S_i] t^\ell \Big) \Big(\sum_m (-1)^m[\operatorname{Sym}^m \mathcal S_i]t^m \Big) = 1
\end{equation}
and likewise identities for $\mathcal Q_i$ and the duals $\mathcal S_i^\vee, \mathcal Q_i^\vee$, which follow from the exactness of the Koszul complex \cite[A2.6.1]{Eis95}.  Now, we note by \cite[Exercise III.8.4]{Har77} that
\begin{equation}\label{eqn:lem3}
\begin{split}
&(\pi_\br)_* \pi_{(n-1)1}^* (\eta_2^\ell \eta_1^m) = [\operatorname{Sym}^\ell \mathcal S_1^\vee \otimes \operatorname{Sym}^m \mathcal Q_k]\textnormal{ and}\\
&(\widetilde\pi_\br)_* \widetilde\pi_{(n-1)1}^* (\eta_2^\ell \eta_1^m) = [\operatorname{Sym}^\ell \mathcal S_k^\vee \otimes \operatorname{Sym}^m \mathcal Q_1].
\end{split}
\end{equation}
Combining \eqref{eqn:lem1}, \eqref{eqn:lem2}, and \eqref{eqn:lem3} then yields the desired identities.
\end{proof}

\begin{proof}[Sketch of proof of \Cref{prop:fundcomp}]
One combines \Cref{lem:fundcomp} with the projection formula for $K$-theory \cite[\S15.1]{Ful98}.  Then by expanding the power series in $u$ and $v$, which is in fact a finite sum, comparing coefficients yields the desired identity.  See the proof in \cite{FS12} for details.
\end{proof}

\section{Summations of lattice point generating functions}\label{section:latticegenfct}

The method of equivariant localization \S\ref{subsection:GKZ}, aided by \Cref{prop:fundcomp}, will reduce our $K$-theoretic computations to summations of lattice point generating functions.  Here we collect some useful results concerning summations of lattice point generating functions arising from polyhedra, along with variants that are suitable for our purposes.   Our main novel contribution is \Cref{thm:sumOfCones}, which is a useful variant of the method of flipping cones.  The reader may see \Cref{eg:illustration} for illustrations of main theorems here.

\subsection{Brion's formula} Here we review the results in \cite{Bri88, Ish90}.  For a subset $S\subset \RR^n$ , denote by $\One(S): \ZZ^n \to \QQ$ its indicator function sending $x \mapsto 1$ if $x\in S$ and 0 otherwise.  Let $\fP_n$ be a vector space of $\QQ$-valued functions on $\ZZ^n$ generated by $\{\One(P) \mid P\subset \RR^n \textnormal{ lattice polyhedra}\}$. It follows from the Brianchon-Gram formula \cite{brianchon, gram, shephard} that $\fP_n$ is generated by indicator functions of cones, and by triangulating one conludes that $\fP_n$ is generated by indicator functions of smooth cones.

We will often consider elements of $\fP_n$ as elements in the power series ring $\QQ[[t_1^\pm, \ldots, t_n^\pm]]$ by identifying $\One(P)$ with $\sum_{\lambda\in P \cap \ZZ^n} \mathbf t^\lambda$.  The following fundamental theorem concerns convergence of these power series to a rational function.

\begin{thm}\cite[Theorem 1.2]{Ish90}\footnote{Fink and Speyer in \cite{FS12} and Postnikov in \cite{Pos09} cite \cite{KP92}, whereas Ishida in \cite{Ish90} writes that the theorem is originally due to Brion.} Consider $\fP_n$ as a $\QQ[t_1^\pm, \ldots, t_n^\pm]$-submodule of $\QQ[[t_1^\pm, \ldots, t_n^\pm]]$, and let $\QQ(t_1, \ldots, t_n)$ be the fraction field.  There exists a unique $\QQ[t_1^\pm, \ldots, t_n^\pm]$-linear map
\[
\Hilb: \fP_n \to \QQ(t_1, \ldots, t_n)
\]
such that if $C = \operatorname{Cone}(v_1, \ldots, v_k) \subset \RR^n$ is a smooth cone with primitive ray generators $v_1, \ldots, v_k \in \ZZ^n$ then $\Hilb(\One(C)) = \prod_{i=1}^k \frac{1}{1-\mathbf t^{v_i}}$.
\end{thm}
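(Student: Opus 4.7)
Plan: Uniqueness is immediate from the remark preceding the theorem --- $\fP_n$ is generated as a $\QQ$-vector space by indicator functions of smooth cones, so any $\QQ[\mathbf{t}^\pm]$-linear map out of $\fP_n$ is determined by its values on these generators.

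For existence, I would define the candidate $\Hilb$ on generators by the formula in the statement and then verify well-definedness: every $\QQ[\mathbf{t}^\pm]$-linear relation $\sum_i p_i(\mathbf{t})\One(C_i)=0$ among smooth-cone indicators must translate to the identity $\sum_i p_i(\mathbf{t}) \prod_j(1-\mathbf{t}^{v_{i,j}})^{-1} = 0$ in $\QQ(t_1,\ldots,t_n)$. The key preliminary fact is that for a smooth cone $C=\Cone(v_1,\ldots,v_k)$ the primitive rays extend to a $\ZZ$-basis of $\ZZ^n$, so $C\cap\ZZ^n=\{\sum a_jv_j:a_j\in\ZZ_{\geq 0}\}$ and the generating function $\sum_{\lambda\in C\cap\ZZ^n}\mathbf{t}^\lambda$ is absolutely convergent to $\prod_j(1-\mathbf{t}^{v_j})^{-1}$ on the open region $U_C:=\{\mathbf{t}\in(\CC^*)^n : |\mathbf{t}^{v_j}|<1 \text{ for all } j\}$.

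To bridge from this cone-by-cone convergence to a single rational-function identity, I would use the method of flipping cones. Pick a generic $\xi\in\QQ^n$ with $\langle\xi,v\rangle\neq 0$ for every primitive ray $v$ appearing in the (finite) relation; by signed inclusion--exclusion on faces, each $\One(C_i)$ can be rewritten as a $\QQ$-linear combination of indicator functions of (possibly half-open) simplicial cones whose primitive rays all lie in $\{\xi>0\}$. The matching rational-function identity $(1-\mathbf{t}^{-v})^{-1}=-\mathbf{t}^v(1-\mathbf{t}^v)^{-1}$ preserves the target equation under these substitutions, so it suffices to check the flipped relation. All flipped cones now share a nonempty common region of absolute convergence $U_\xi$; on $U_\xi$ the pointwise-on-$\ZZ^n$ relation rearranges into a convergent series identity forcing the corresponding sum $\sum_\ell q_\ell(\mathbf{t}) \prod_j(1-\mathbf{t}^{w_{\ell,j}})^{-1}$ to vanish on $U_\xi$, hence identically as an element of $\QQ(t_1,\ldots,t_n)$.

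The main obstacle is the combinatorial bookkeeping of the flipping step: selecting a single generic $\xi$ that separates all rays at once and carefully tracking signs, half-open faces, and lower-dimensional boundary terms produced when each ray is reflected. Once this reduction to cones sharing a common domain of absolute convergence is in place, the passage from pointwise identity of indicator functions to identity of rational functions is routine analytic continuation.
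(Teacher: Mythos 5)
The paper offers no proof of this statement (it is quoted from \cite{Ish90}), so your argument stands on its own; its overall shape --- uniqueness from the fact that smooth-cone indicators generate, existence by verifying that finite relations among indicators map to identities of rational functions, using flips into a common half-space and absolute convergence on a common open set --- is the standard route and is mostly sound. The one step that fails as written is the claim that each $\One(C_i)$ can be rewritten as a $\QQ$-linear combination of indicators of (half-open) simplicial cones all of whose rays lie in $\{\xi>0\}$. Reflecting a ray $v$ with $\langle \xi, v\rangle<0$ uses $\One(\{a\geq 0\}) = \One(\{a\in\RR\}) - \One(\{a<0\})$, and the first term forces the appearance of cones with nontrivial lineality space; these cannot be eliminated. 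Already in dimension one, $\One(\ZZ_{\geq 0}) = \One(\ZZ) - \One(\ZZ_{<0})$, and no finite combination of downward-pointing half-open cones equals $\One(\ZZ_{\geq 0})$, since any such combination vanishes far in the positive direction. The generating function of a cone containing a line converges on no open set, so your common-domain-of-convergence argument says nothing about these terms, and the well-definedness check breaks down exactly there.

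The repair is standard but must be made explicit. For a cone $L$ with a primitive lattice vector $w$ in its lineality space one has $(1-\bt^{w})\One(L)=0$ in $\fP_n$, so $\QQ[\bt^{\pm}]$-linearity into the field $\QQ(t_1,\ldots,t_n)$ forces $\Hilb(\One(L))=0$ (this is Remark (2) following the theorem). Given a relation $\sum_i \alpha_i \One(C_i) + \sum_j \beta_j \One(L_j)=0$ with the $C_i$ all $\xi$-pointed and the $L_j$ containing lines, multiply through by $q(\bt):=\prod_j (1-\bt^{w_j})$ to annihilate the $L_j$ terms; your convergence argument then yields $q(\bt)\cdot\sum_i \alpha_i \Hilb(\One(C_i))=0$ in $\QQ(t_1,\ldots,t_n)$, and since $q\neq 0$ in a field you may cancel it. This is precisely the device the paper itself deploys in the proof of \Cref{lem:flipInj}. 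With this supplement (and the observation that the flipped relation plus line terms is now a genuine identity in $\fP_n$), your argument goes through.
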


Two remarks about the above linear map $\Hilb$ follow:
\begin{enumerate}
\item The notation $\Hilb$ agrees with our previous notion of Hilbert series:  when $C$ is a pointed rational polyhedral cone, not necessarily smooth, $\Hilb(\One(C))$ equals the multigraded Hilbert series of $\CC[\mathbf t^\lambda \mid \lambda \in C\cap \ZZ^n]$ in the sense of \cite[Theorem 8.20]{MS05}.
\item If $P$ is a lattice polyhedron with a non-trivial lineality space, then $\Hilb(\One(P)) = 0$.
\end{enumerate}

For $P$ a lattice polyhedron, we will often by abuse of notation write $\Hilb(P)$ for $\Hilb(\One(P))$.
An important result on rational generating functions for cones is the formula of Brion \cite{Bri88}, which was slightly generalized in \cite{Ish90}.  Here we will only need the following special case of \cite[Theorem 2.3]{Ish90}.

\begin{thm} \label{thm:generalbrion}
Let $P \subset \RR^n$ be a lattice polyhedron with a nonempty set of vertices (so $P$ has no lineality space), and let $C(P)$ be its recession cone.  For every vertex $v$ of $P$, write $C_v$ for $\Cone(P-v)$.  Then we have
\[
\Hilb(P) = \sum_{v\in \operatorname{Vert}(P)} \Hilb(C_v + v) \qquad \textnormal{ and }\qquad \Hilb(C(P)) = \sum_{v\in \operatorname{Vert}(P)} \Hilb(C_v).
\]
\end{thm}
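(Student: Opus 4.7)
The plan is to combine the valuation property of $\Hilb$ with a signed decomposition of $\One(P)$ in terms of vertex tangent cones, modulo indicator functions of polyhedra with lines. First I would establish the basic fact already noted in remark (2) that $\Hilb(\One(Q)) = 0$ whenever $Q$ contains a nonzero linear subspace. This follows from the $\QQ[\mathbf t^\pm]$-linearity: if $Q$ contains a line in a primitive direction $v \in \ZZ^n$, one can decompose $\One(Q)$ as a $\QQ[\mathbf t^\pm]$-combination of indicator functions of smooth cones each containing both $v$ and $-v$, and such a cone contributes $(1 - \mathbf t^v)^{-1}(1 - \mathbf t^{-v})^{-1} \cdot (\cdots)$ where the pair telescopes to zero. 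Hence $\Hilb$ descends to the quotient of $\fP_n$ by the subspace $L_n$ spanned by indicator functions of polyhedra with nontrivial lineality.

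Next I would prove the key identity
\[
\One(P) \equiv \sum_{v \in \operatorname{Vert}(P)} \One(C_v + v) \pmod{L_n},
\]
which is the combinatorial heart of the statement. For this I would follow the Lawrence--Varchenko strategy: pick a generic linear functional $\xi \in (\RR^n)^*$ (generic in the sense that no edge direction of $P$ is $\xi$-orthogonal). At each vertex $v$, define a flipped cone $\sigma_v^\xi$ by reversing each ray of $C_v$ along which $\xi$ strictly decreases, with sign $(-1)^{f_\xi(v)}$ where $f_\xi(v)$ is the number of flipped rays. One then checks (a) that the signed sum $\sum_v (-1)^{f_\xi(v)} \One(v + \sigma_v^\xi)$ equals $\One(P)$ pointwise, by a direct argument following lattice points along $\xi$-increasing paths; and (b) that $(-1)^{f_\xi(v)} \One(v + \sigma_v^\xi) \equiv \One(v + C_v) \pmod{L_n}$, since the flipping inclusion-exclusion introduces only cones with lines in the flipped directions. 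Applying $\Hilb$ to both sides then yields $\Hilb(P) = \sum_v \Hilb(C_v + v)$.

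For the second identity involving the recession cone $C(P)$, I would run the same Lawrence--Varchenko construction with the same $\xi$ applied to $C(P)$ itself (viewed as having vertices ``at infinity'' corresponding to the unbounded directions of $P$): the flipped cones produced at the origin for $C(P)$ match the translates to the origin of the flipped tangent cones $\sigma_v^\xi$, so the same mod-$L_n$ identity
\[
\One(C(P)) \equiv \sum_{v \in \operatorname{Vert}(P)} \One(C_v) \pmod{L_n}
\]
holds. Alternatively, one can deduce this directly from the first identity by applying it to the dilated polyhedra $kP$ and comparing leading terms as $k \to \infty$, since dilation translates vertex cones but leaves them shape-invariant, while $\frac{1}{k}\cdot kP$ approximates $C(P)$ in Hausdorff distance on any compact set.

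The main obstacle is step two: verifying that the Lawrence--Varchenko flipping procedure actually gives a pointwise identity before taking $\Hilb$, and carefully tracking the line-containing cones that arise from each individual flip so that the error lies in $L_n$. This is a nontrivial combinatorial computation that depends on the genericity of $\xi$ with respect to the edge directions of $P$, and it is where the essential geometric content of Brion's theorem resides.
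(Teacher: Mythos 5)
First, a point of comparison: the paper does not prove \Cref{thm:generalbrion} at all --- it is quoted as a special case of Ishida's Theorem 2.3 (Brion's formula together with its recession-cone companion) --- so your argument must stand on its own. For the first identity your skeleton is the standard Lawrence--Varchenko route and is essentially sound: $\Hilb$ kills polyhedra with lines, each flipped cone is congruent to its unflipped original modulo your subspace $L_n$, and the signed flipped cones sum pointwise to $\One(P)$. Two caveats. Minor: the vanishing on line-containing polyhedra is cleaner via translation invariance ($\One(Q+v)=\One(Q)$ and $\QQ[\mathbf t^\pm]$-linearity give $(1-\mathbf t^v)\Hilb(\One(Q))=0$); smooth cones are pointed, so they cannot ``contain both $v$ and $-v$'' as you write. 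More substantively: for unbounded $P$ the pointwise sweeping identity in your step (a) is \emph{false} for arbitrary generic $\xi$. Take $P=[0,\infty)\subset\RR$ and $\xi=-1$: the single flipped cone contributes $-\One((-\infty,0))$, which is not $\One(P)$, and the $\xi$-increasing path from a lattice point of $P$ never reaches the vertex. You must require in addition that $\langle\xi,w\rangle>0$ for every nonzero $w$ in the recession cone $C(P)$, so that $\xi$ attains its minimum over $P$ at a vertex; this is compatible with genericity but must be stated and used. (Alternatively, the Brianchon--Gram identity, which the paper invokes one paragraph earlier, gives $\One(P)\equiv\sum_v\One(C_v+v)\pmod{L_n}$ directly, since tangent cones at positive-dimensional faces contain lines.)

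The second identity is where the genuine gap lies. ``Running Lawrence--Varchenko on $C(P)$ viewed as having vertices at infinity'' is not a construction: $C(P)$ is itself a pointed polyhedron with the single vertex $0$, so applying the first identity to it returns a tautology, and the claimed matching of flipped cones is asserted rather than argued. The fallback dilation argument also fails: $\tfrac1k\cdot(kP)=P$ exactly, and even reading this as $\tfrac1k P\to C(P)$, Hausdorff convergence of polyhedra implies nothing about lattice-point generating functions, and there is no topology on $\QQ(t_1,\ldots,t_n)$ in which $\Hilb(C_v+kv)=\mathbf t^{kv}\Hilb(C_v)$ converges to $\Hilb(C_v)$. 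So $\Hilb(C(P))=\sum_v\Hilb(C_v)$ is left unproven; this is precisely the part of Ishida's theorem that goes beyond Brion. A correct route is to establish the congruence $\One(C(P))\equiv\sum_{v}\One(C_v)\pmod{L_n}$ directly, e.g.\ by homogenizing $P$ to the closed cone over $P\times\{1\}$ in $\RR^{n+1}$ and comparing the height-zero part of its vertex-cone decomposition with the cones $C_v$. As written, your proposal proves only half the theorem.
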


\subsection{Lawrence-Varchenko formula (flipping cones) and variants}\label{subsection:flipping}

Here we review the method of flipping cones \cite[\S6]{FS10}, \cite[(11)]{BHS09}.  Our contribution is a generalization \Cref{thm:sumOfCones}, which will serve as a key technical tool in subsequent sections.

\medskip
Let $\bz \in \RR^n$. For every $a \in \RR$, we will denote the hyperplane $\{x \in \RR^n|\langle \bz,x \rangle = a\}$ by $H_{\bz= a}$ and the half-space $\{x \in \RR^n|\langle \bz,x \rangle \geq a\}$ by $H_{\bz\geq a}$.  For an element $f\in \mathcal P_n$, by considering $f$ as an element of $\QQ[[t_1^\pm, \ldots, t_n^\pm]]$ we write $f|_{H_{\bz = a}}$ for the sum of terms $c t^w$ in $f$ such that $\langle w, \bz \rangle = a$.

\begin{defn}
A polyhedron $P \subset \RR^n$ is \textbf{$\bz$-pointed} if $P \subseteq H_{\bz \geq a}$ for some $a \in \RR$. Let $\fP_n^{\bz}$ be the $\QQ$-vector space generated by $\bz$-pointed elements in $\fP_n$.
\end{defn}
 
We note the following useful observation:  Let $P\subset \RR^n$ be a polyhedron with vertices $\operatorname{Vert}(P)$, and as before let $C_v := \operatorname{Cone}(P-v)$ for $v\in \operatorname{Vert}(P)$.  Then, for $\bz\in \RR^n$, the cone $C_v$ is $\bz$-pointed if and only if $v$ is a vertex of the face $P^{-\bz}$ of $P$ minimizing in the $\bz$ direction.

\smallskip
If $f\in \fP_n^\bz$, then one can compute $\Hilb(f)$ "slice-by-slice" in the following sense.

\begin{lem} \label{lem:flipInj}
    Let $f,g \in \fP_n^{\bz}$ and suppose that $\Hilb(f)=\Hilb(g)$. Then for every $a \in \RR$, it holds that $\Hilb(\left.f\right|_{H_{\bz= a}})=\Hilb(\left.g\right|_{H_{\bz= a}})$.
\end{lem}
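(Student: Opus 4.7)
The plan is to work slice-by-slice via a formal Laurent expansion. Setting $h := f - g$ gives $h \in \fP_n^\bz$ with $\Hilb(h) = 0$, and it suffices to show $\Hilb(h|_{H_{\bz = a}}) = 0$ for every $a \in \RR$. By scaling (when $\bz \in \QQ^n$) followed by a unimodular change of coordinates on $\RR^n$, I would first reduce to the case $\bz = \be_n$, so that $H_{\bz = a}$ becomes the hyperplane $\{x_n = a\}$. (The case $\bz \in \RR^n \setminus \QQ^n$ admits an analogous treatment, reducing to the rational case by choosing a rational vector agreeing with $\bz$ on the finitely many rays and vertex-shifts arising in any fixed decomposition of $h$.)

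Assuming $\bz = \be_n$, decompose $h = \sum_i c_i \One(K_i)$ as a finite $\QQ$-linear combination of indicator functions of $\be_n$-pointed smooth lattice cones $K_i = v_i + \Cone(r_{i,1}, \ldots, r_{i,k_i})$ with $(r_{i,j})_n \geq 0$, via standard triangulation and cone-flipping. For each $K_i$, expand the Hilbert series $\Hilb(K_i) = t^{v_i}/\prod_j(1 - t^{r_{i,j}})$ as an element of the Laurent series ring $\QQ(t_1, \ldots, t_{n-1})((t_n))$: keep the factors with $(r_{i,j})_n = 0$ in the denominator (they already lie in $\QQ(t_1, \ldots, t_{n-1})$) and expand the factors with $(r_{i,j})_n > 0$ as geometric series in $t_n$. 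A direct computation identifies the coefficient of $t_n^a$ in the resulting expansion with $t_n^{-a}\Hilb(K_i \cap \{x_n = a\})$.

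Summing over $i$ then yields the identity
\[
\Hilb(h) \;=\; \sum_{a} \Hilb\bigl(h|_{H_{\be_n = a}}\bigr)
\]
as elements of $\QQ(t_1, \ldots, t_{n-1})((t_n))$. Since the natural inclusion $\QQ(t_1, \ldots, t_n) \hookrightarrow \QQ(t_1, \ldots, t_{n-1})((t_n))$ is injective, the hypothesis $\Hilb(h) = 0$ forces every $t_n^a$-coefficient to vanish, i.e.\ $\Hilb(h|_{H_{\be_n = a}}) = 0$ for each $a$. I expect the main obstacle to be the coefficient-matching computation identifying the $t_n^a$-coefficient of $\Hilb(K_i)$ with the Hilbert series of its slice. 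The technical subtlety is that rays with $(r_{i,j})_n = 0$ lie inside the slice hyperplane and can make individual slices unbounded (contributing lineality with possibly vanishing Hilbert series), so both sides must be compared as rational functions in $\QQ(t_1, \ldots, t_{n-1})$ rather than as naive lattice-point sums; once this identification is in place, extracting coefficients is routine.
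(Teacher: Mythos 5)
Your overall strategy (expand $\Hilb(h)$ as a Laurent series in the $\bz$-direction and read off that each slice's Hilbert series is a coefficient) is a legitimate alternative to the paper's argument, but as written it has two genuine gaps, the first of which is fatal for the statement in the generality claimed.

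First, the reduction to $\bz=\be_n$ only covers $\bz\in\QQ^n$, and the parenthetical fix for irrational $\bz$ does not work. If $\bz$ is not proportional to a rational vector, then $\bz^\perp\cap\ZZ^n$ has rank at most $n-2$, so for any rational surrogate $\bz'$ the equivalence relation ``same $\bz'$-slice'' on $\ZZ^n$ is strictly \emph{coarser} than ``same $\bz$-slice''; no finite list of sign or value conditions on rays and vertex shifts changes this, because the slicing is a condition on the infinite support of $h$. Knowing that $\Hilb$ vanishes on each coarse $\bz'$-slice does not imply it vanishes on the finer $\bz$-slices, so the implication runs the wrong way. Note that the irrational case is precisely the strongest instance of the lemma (it is full injectivity of $\Hilb$ on $\fP_n^{\bz}$, i.e.\ \cite[Lemma 6.3]{FS10}), so it cannot be deduced from the rational case. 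A repair within your framework is to expand directly in a Hahn/Laurent series field graded by the ordered group $\langle\bz,\ZZ^n\rangle\subset\RR$ rather than changing coordinates. Second, your argument needs $h=\sum_i c_i\One(K_i)$ with $K_i$ smooth and $\bz$-pointed as an identity of \emph{functions} on $\ZZ^n$ (otherwise ``summing over $i$'' does not identify the $t_n^a$-coefficient with $\Hilb(h|_{H_{\bz=a}})$, which is defined via the actual restriction of $h$). The cone flip $C\mapsto C^{\bz'}$ of \Cref{lem:coneFlip} only preserves $\Hilb$, not the indicator function, so ``triangulation and cone-flipping'' does not deliver this; you would need the Lawrence--Varchenko identity of indicator functions applied to each $\bz$-pointed generator of $h$, with a generic perturbation of $\bz$, plus an argument for unbounded generators. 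The paper's proof sidesteps both issues: writing $b=f-g=\sum_i p_i\One(C_i)$ with \emph{arbitrary} smooth cones, it multiplies by $q=\prod_i\prod_{j_i}(1-\bt^{j_i})$ to get a Laurent polynomial $qb$ with $\Hilb(qb)=q\,\Hilb(b)=0$, hence $qb=0$; comparing the lowest $\bz$-slices of the product then forces $q_0\cdot\Hilb(b|_{H_{\bz=a_0}})=0$, a contradiction since both factors are nonzero elements of the field $\QQ(t_1,\ldots,t_n)$. That argument requires no $\bz$-pointed decomposition and treats all real $\bz$ uniformly.
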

\begin{proof}
Write $b=f-g$, and suppose by contradiction that there is an $a \in \RR$ with $\Hilb(\left.b\right|_{H_{\bz= a}}) \neq 0$. Since $b \in \fP_n^{\bz}$, there is a minimal such $a$, which we denote by $a_0$. Writing $b = \sum_i p_i \One(C_i)$ with $C_i$ smooth cones and $p_i$ Laurent polynomials, we define a nonzero Laurent polynomial $q(\bt) = \sum_{e\in \ZZ^n}{\lambda_e \boldsymbol{t}^e}$ by $q(\bt) := \prod_i \prod_{j_i} (1 - \bt^{j_i})$ where $j_i$ ranges over the primitive rays of $C_i$.
By construction $q \cdot b$ has finite support, i.e.\ is a Laurent polynomial, and $\Hilb(q \cdot b)=q \Hilb(b)=0$. Hence, we have $q \cdot b=0$. Let $c = \min\{\langle \bz,e \rangle|\lambda_e \neq 0\}$, and let $q_0 = \sum_{e:\langle \bz,e \rangle=c}{\lambda_e \boldsymbol{t}^e}$. Then $0=\Hilb(\left.(q\cdot b)\right|_{H_{\bz= a_0+c}})=q_0\Hilb(b|_{H_{\bz= a_0}}) \neq 0$, a contradiction.
\end{proof}

Suppose that $\bz = (\zeta_1, \ldots, \zeta_n)$ is chosen such that the $\zeta_i$'s are $\QQ$-linearly independent, in which case we say ``$\bz$ is \textbf{irrational}.''
Then for any $a \in \RR$, the intersection $H_{\bz=a} \cap \ZZ^n$ consists of at most one point. In this case Lemma \ref{lem:flipInj} reduces to saying $\Hilb: \fP^{\bz}_n \to \QQ(t_1,\ldots,t_n)$ is injective, recovering \cite[Lemma 6.3]{FS10}.
We next recall the notion of cone flips. We begin with a lemma for their existence.

\begin{lem}{\cite[Lemma 6]{Haa05}, \cite[Lemma 2.1]{FS12}} \label{lem:coneFlip}
Assume $\bz$ is irrational.
For every $f\in \fP_n$, there is a unique $f^{\bz} \in \fP_n^{\bz}$ such that $\Hilb(f)=\Hilb(f^{\bz})$. The map $f \mapsto f^{\bz}$ is linear.
\end{lem}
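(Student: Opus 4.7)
The plan is to prove uniqueness first, which is essentially immediate from Lemma~\ref{lem:flipInj}, and then establish existence by constructing $f^\bz$ on indicator functions of smooth cones (which generate $\fP_n$) and bootstrapping to general $f$ via uniqueness.

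For uniqueness, suppose $f_1, f_2 \in \fP_n^{\bz}$ both satisfy $\Hilb(f_i) = \Hilb(f)$. Apply Lemma~\ref{lem:flipInj} to $b := f_1 - f_2 \in \fP_n^{\bz}$: for every $a \in \RR$ one gets $\Hilb(b|_{H_{\bz = a}}) = 0$. Because $\bz$ is irrational, the hyperplane $H_{\bz = a}$ meets $\ZZ^n$ in at most one point, so $b|_{H_{\bz = a}}$ is a scalar multiple of a single monomial and is recovered exactly by $\Hilb$; thus $b|_{H_{\bz = a}} = 0$ for all $a$, and so $b = 0$ pointwise on $\ZZ^n$.

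For existence, I would construct the flip on each smooth cone and extend linearly. Let $C = \Cone(v_1, \ldots, v_k)$ be a smooth cone with primitive integer ray generators; irrationality of $\bz$ gives $\langle v_i, \bz \rangle \neq 0$ for every $i$. Set $I := \{i : \langle v_i, \bz\rangle < 0\}$ and apply the identity $\frac{1}{1 - \bt^{v_i}} = \frac{-\bt^{-v_i}}{1 - \bt^{-v_i}}$ at each $i \in I$ inside $\Hilb(\One(C)) = \prod_i \frac{1}{1-\bt^{v_i}}$ to obtain
\[
\Hilb(\One(C)) \;=\; (-1)^{|I|}\, \bt^{-\sum_{i \in I} v_i} \prod_{i \notin I} \frac{1}{1-\bt^{v_i}} \prod_{i \in I} \frac{1}{1-\bt^{-v_i}}.
\]
The right-hand side is $\Hilb$ of $(-1)^{|I|}\One(\widetilde C)$, where $\widetilde C := -\sum_{i \in I} v_i + \Cone(\widetilde v_1, \ldots, \widetilde v_k)$ with $\widetilde v_i := -v_i$ for $i \in I$ and $\widetilde v_i := v_i$ otherwise. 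Since every $\widetilde v_i$ satisfies $\langle \widetilde v_i, \bz\rangle > 0$, the translated cone $\widetilde C$ is $\bz$-pointed, so one may set $\One(C)^{\bz} := (-1)^{|I|} \One(\widetilde C)$.

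Well-definedness of the resulting extension to $\fP_n$ follows from uniqueness: any two presentations of $f \in \fP_n$ as a $\QQ$-linear combination of indicator functions of smooth cones produce two candidate elements of $\fP_n^{\bz}$ with common Hilbert series $\Hilb(f)$, hence they agree. Linearity is automatic by the same mechanism, since $f^{\bz} + g^{\bz}$ lies in $\fP_n^{\bz}$ with $\Hilb(f^{\bz} + g^{\bz}) = \Hilb(f+g)$, and similarly for scalar multiplication. The main obstacle is the single-cone flipping identity above: one must carefully track the sign $(-1)^{|I|}$ together with the translation by $-\sum_{i\in I}v_i$ to ensure that the resulting polyhedron is genuinely $\bz$-pointed, and one must separately verify that the map is well-defined on all of $\fP_n$ rather than just on the generating set of smooth cones.
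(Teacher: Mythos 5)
Your proof is correct and is essentially the argument this lemma rests on: the paper itself only cites \cite{Haa05} and \cite{FS12} here, but your uniqueness step is precisely the paper's own remark that \Cref{lem:flipInj} makes $\Hilb$ injective on $\fP_n^{\bz}$ when $\bz$ is irrational, and your existence construction (flipping each ray $v_i$ with $\langle \bz, v_i\rangle<0$ via $\frac{1}{1-\bt^{v_i}}=\frac{-\bt^{-v_i}}{1-\bt^{-v_i}}$ and translating by $-\sum_{i\in I}v_i$) reproduces exactly the explicit formula \eqref{eq:coneFlip} recorded immediately after the lemma. The only cosmetic omission is that the generators of $\fP_n$ are lattice translates $w+C$ of smooth cones rather than cones with apex at the origin, which your construction handles verbatim because $\One(w+C)=\bt^{w}\One(C)$, $\Hilb$ is $\QQ[\bt^{\pm}]$-linear, and $\fP_n^{\bz}$ is stable under multiplication by monomials.
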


The map $(\cdot)^{\bz}$ in the lemma is described explicitly as follows.  Let $C \subseteq \RR^n$ be a rational simplicial cone
\[
C =  \textstyle \{w+\sum_{i=0}^{n-1}{a_i}v_i \mid a_i \geq 0 \text { for all } i \in [n]\}.
\]
Then the image $C^{\bz} \in \fP_n^{\bz}$ under the map of Lemma \ref{lem:coneFlip} is given by
\begin{equation} \label{eq:coneFlip}
C^{\bz} = (-1)^\ell \One\left(\Big\{w+\sum_{i=0}^{n-1}{a_i}v_i\medspace \begin{array}{|c} a_i \geq 0 \text { for all } i \text{ with } \langle \bz, v_i \rangle >  0, \\ \text{ and }a_i < 0 \text { for all } i \text{ with } \langle \bz, v_i \rangle <  0 \end{array}\Big\}\right),
\end{equation}
where $\ell$ is the number of rays $v_i$ for which $\langle \bz, v_i \rangle <  0$. 
We will refer to $C^{\bz}$ as the \emph{cone flip} of $C$ in direction $\bz$.
For a general pointed rational cone $C$, one defines the flipped cone $C^{\bz} \in \fP_n^{\bz}$ by triangulating the cone\footnote{We remark that calling $C^{\bz}$ the "flipped cone" of $C$ is a slight abuse of terminology when $C$ is not simplicial, since $C^{\bz}$ is not necessarily the support function of a polyhedron.  It can be a genuine linear combination of some of those; see \cite[Remark 6.7]{FS10}.}.

\begin{rem}
The assumption that $\bz$ is irrational is essential for \Cref{lem:coneFlip}: if $\bz$ is not irrational then $\fP_n^{\bz}$ contains some lattice polyhedron $P$ with a non-trivial lineality space, and $\Hilb(P)=0=\Hilb(0)$, contradicting uniqueness.
\end{rem}

Now, suppose we are given an expression over a finite index set $\Lambda$
\begin{equation} \label{eqn:sumOfCones}
    \varphi=\sum_{\lambda\in \Lambda} {a_\lambda\Hilb(C_\lambda)} \in \QQ(t_1,\ldots,t_n),
\end{equation}
where the $C_\lambda$ are pointed cones with vertices not necessarily at the origin and $a_\lambda \in \QQ$ are scalars.  Suppose we know that $\varphi \in \QQ(t_1, \ldots, t_n)$ is in fact a Laurent polynomial, for example, because $\varphi$ arose from a computation in $T$-equivariant $K$-theory.  Then we can use cone-flipping to get partial information about the coefficients of $\varphi$.  The following proposition is our "cone-flipping in slices" technique which will be used repeatedly in later sections.

\begin{thm}\label{thm:sumOfCones}
Suppose $\varphi=\sum_\lambda{a_\lambda\Hilb(C_\lambda)}$ is a Laurent polynomial, i.e.\ $\varphi \in \QQ[t_1^{\pm},\ldots,t_n^{\pm}]$, and let $P$ be the convex hull of the vertices of the $C_\lambda$.  For $\bz\in \RR^n$, not necessarily irrational, and $b\in \RR$, suppose that every cone $C_\lambda$ whose vertex $w_\lambda$ satisfies $\langle \bz, w_\lambda \rangle < b$ is $\bz$-pointed.  Then
\[
\varphi|_{H_{\bz=b}}=\sum_{C_\lambda \in \fP_n^{\bz}}{a_\lambda \Hilb(C_\lambda \cap H_{\bz=b})}.
\]
In particular, if $P \cap H_{\bz = b}$ is the face $P^{-\bz}$ of $P$ minimizing in the $\bz$ direction, then
\[
\varphi|_{H_{\bz = b}} = \sum_{\tiny \begin{matrix} \textnormal{$\bz$-pointed $C_\lambda$ whose} \\ \textnormal{vertex $w_\lambda$ is on $P^{-\bz}$} \end{matrix}} a_\lambda \Hilb(C_\lambda \cap H_{\bz = b}).
\]
\end{thm}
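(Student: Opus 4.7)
\emph{Plan.} My approach is to reduce to the case of irrational $\bz$ and then apply the cone-flipping method of \Cref{lem:coneFlip} together with the slice invariance of \Cref{lem:flipInj}. The ``in particular'' statement follows immediately from the main identity: when $P \cap H_{\bz = b} = P^{-\bz}$, every vertex $w_\lambda$ of the cones $C_\lambda$ lies in $P \subseteq H_{\bz \geq b}$, so any $\bz$-pointed $C_\lambda$ with $w_\lambda \notin P^{-\bz}$ satisfies $\langle \bz, w_\lambda\rangle > b$, making $C_\lambda \cap H_{\bz = b}$ empty and causing the corresponding term to vanish from the sum.

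For the main identity, I would first assume $\bz$ is irrational. By \Cref{lem:coneFlip}, each cone $C_\lambda$ has a unique flip $C_\lambda^{\bz} \in \fP_n^{\bz}$ with $\Hilb(C_\lambda^{\bz}) = \Hilb(C_\lambda)$, and for $\bz$-pointed $C_\lambda$ one has $C_\lambda^{\bz} = \One(C_\lambda)$. Setting $F := \sum_\lambda a_\lambda C_\lambda^{\bz}$ gives an element of $\fP_n^{\bz}$ with $\Hilb(F) = \varphi$. Since $\varphi$ is a Laurent polynomial and thus has finite support, $\varphi$ itself lies in $\fP_n^{\bz}$, so applying \Cref{lem:flipInj} to $F$ and $\varphi$ yields $\Hilb(F|_{H_{\bz = b}}) = \varphi|_{H_{\bz = b}}$.

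It then remains to identify $F|_{H_{\bz = b}}$ cone by cone. For $\bz$-pointed $C_\lambda$ the contribution is exactly $\One(C_\lambda \cap H_{\bz = b})$. For non-$\bz$-pointed $C_\lambda$, the hypothesis forces $\langle \bz, w_\lambda\rangle \geq b$, and the explicit formula \eqref{eq:coneFlip} (applied to each piece of a simplicial triangulation, with inclusion-exclusion handling the non-simplicial case) shows that the support of $C_\lambda^{\bz}$ lies strictly in $\{x : \langle \bz, x\rangle > \langle \bz, w_\lambda\rangle\}$: any support point $w_\lambda + \sum a_i v_i$ has $a_i < 0$ on at least one ray $v_i$ with $\langle \bz, v_i\rangle < 0$, contributing $a_i \langle \bz, v_i\rangle > 0$ strictly. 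These terms therefore contribute zero at level $b$, and taking Hilbert series of the resulting slice identity gives the claimed equation.

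The main obstacle is the reduction from arbitrary $\bz$ to irrational $\bz$, since the cone-flip construction of \Cref{lem:coneFlip} requires irrationality. My plan is to perturb $\bz$ to a nearby irrational $\bz'$ with $\bz'$-pointedness of every $C_\lambda$ matching that of $\bz$ — a finite system of strict linear inequalities on the perturbation direction, satisfied generically — and to recover the identity for $\bz$ from the irrational-case identities applied to $\bz'$ at each of the finitely many levels $\langle \bz', p\rangle$ with $p$ in the support of $\varphi|_{H_{\bz = b}}$. Verifying that this reassembly preserves both the hypothesis and the conclusion is the most technical point to check.
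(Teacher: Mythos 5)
Your irrational case and your derivation of the ``in particular'' clause are correct, and the irrational case is essentially the paper's own argument (there the flip direction coincides with $\bz$). The genuine gap is the reduction from general $\bz$ to irrational $\bz$, which you rightly flag as the delicate point but which fails as sketched, for two reasons. (i) The perturbation you want need not exist: $C_\lambda$ is $\bz$-pointed iff $\langle \bz, v\rangle \geq 0$ for every ray $v$ of $C_\lambda$, so if one $\bz$-pointed cone has a ray $v \in \bz^\perp$ and another has the ray $-v$, no irrational $\bz'$ keeps both pointed; preserving pointedness under perturbation is exactly what breaks when rays lie in $\bz^\perp$, i.e.\ in the non-irrational situation the theorem is meant to cover. (ii) More fundamentally, the reassembly cannot produce the asserted identity of rational functions: the right-hand side $\sum a_\lambda \Hilb(C_\lambda \cap H_{\bz=b})$ involves Hilbert series of generally unbounded polyhedral slices with infinitely many lattice points, while each irrational-level identity at $H_{\bz'=c}$ pins down at most one monomial. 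Summing over the finitely many levels meeting the support of $\varphi$ recovers the left-hand side but proves nothing about the cancellation among the infinitely many remaining lattice points of the slices --- which is precisely the content of the theorem.

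The paper sidesteps the reduction entirely. \Cref{lem:flipInj} is proved for arbitrary, not necessarily irrational, $\bz$, so one may slice by the original $H_{\bz=b}$ throughout; the irrational direction $\bz'$ enters only as an irrational approximation (\Cref{def:irrApprox}) used to form the flips $C_\lambda^{\bz'}$, and \Cref{lem:flipinhalfspace} guarantees that these flips are supported in the closed $\bz$-half-space above the vertex, and in the open one when $C_\lambda$ is not $\bz$-pointed. A single application of \Cref{lem:flipInj} at level $b$ then finishes. Adopting this would also repair a smaller soft spot in your irrational case: for a non-simplicial, non-$\bz$-pointed cone, an arbitrary triangulation may have $\bz$-pointed interior pieces whose flips do contain the vertex, so one must triangulate by coning from a ray $v$ with $\langle \bz, v\rangle < 0$, as in the proof of \Cref{lem:flipinhalfspace}.
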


We note two useful immediate consequences of \Cref{thm:sumOfCones} in the following corollary, of which the second statement appeared previously in \cite{FS10}.

\begin{cor}\label{cor:vertexcase}
With assumptions as in \Cref{thm:sumOfCones}, one has the following:
\begin{enumerate}[label=(\alph*)]
    \item \label{vertexcase:1} If $H_{\bz=b}\cap P=\{w\}$ is a vertex of $P$, the coefficient of $\bt^w$ in $\varphi$ is equal to $\sum{a_\lambda}$, where the sum is over all $\lambda$ for which $C_\lambda \in \fP_n^{\bz}$ and the vertex of $C_\lambda$ is at $w$.
    \item \label{vertexcase:2} \cite[Corollary 6.9]{FS10} The Newton polytope $\operatorname{Newt}(\varphi)$ of $\varphi$ is contained in $P$.
\end{enumerate}
\end{cor}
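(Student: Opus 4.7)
The plan is to establish part (b) first and then derive part (a) from it, using the ``in particular'' clause of \Cref{thm:sumOfCones} as the main tool in each case.

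For (b), I argue by contradiction. Suppose some lattice point $u \in \ZZ^n \setminus P$ has nonzero coefficient in $\varphi$. The separating hyperplane theorem furnishes $\bz \in \RR^n$ with $\langle \bz, u\rangle < \min_{v \in P}\langle \bz, v\rangle$; setting $b := \langle \bz, u\rangle$, no vertex $w_\lambda$ of any $C_\lambda$ satisfies $\langle \bz, w_\lambda\rangle < b$, so the hypothesis of \Cref{thm:sumOfCones} is vacuously satisfied. The theorem thus yields
\[
\varphi|_{H_{\bz=b}} = \sum_{C_\lambda \in \fP_n^{\bz}} a_\lambda \Hilb(C_\lambda \cap H_{\bz=b}).
\]
Each $\bz$-pointed $C_\lambda$ lies in $H_{\bz\geq \langle \bz, w_\lambda\rangle} \subseteq H_{\bz > b}$, so $C_\lambda \cap H_{\bz=b} = \emptyset$ and the right-hand side vanishes. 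The coefficient of $\bt^u$ on the left is therefore zero, contradicting our assumption.

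For (a), the hypothesis $H_{\bz=b}\cap P = \{w\}$ forces (after swapping $\bz \leftrightarrow -\bz$ if necessary) $b = \min_{v\in P}\langle \bz, v\rangle$ and $P^{-\bz} = \{w\}$. By (b), $\operatorname{Newt}(\varphi) \subseteq P$, so $\varphi|_{H_{\bz=b}}$ is a Laurent polynomial supported on $P \cap H_{\bz=b}\cap \ZZ^n = \{w\}$; therefore $\varphi|_{H_{\bz=b}} = c\,\bt^w$, where $c$ is the coefficient of $\bt^w$ in $\varphi$. The ``in particular'' clause of \Cref{thm:sumOfCones} gives
\[
c\,\bt^w = \sum_{\substack{C_\lambda \in \fP_n^{\bz}\\ w_\lambda = w}} a_\lambda \Hilb(C_\lambda \cap H_{\bz=b}).
\]
Reading off the coefficient of $\bt^w$ on each side as a lattice-point generating function—each $\Hilb(C_\lambda \cap H_{\bz=b})$ is the generating function of a pointed cone with vertex $w$, so the lattice point $w$ appears with coefficient exactly $1$—then yields $c = \sum a_\lambda \cdot 1 = \sum a_\lambda$, as claimed.

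The main subtlety I anticipate lies in making this final coefficient extraction rigorous, since the right-hand side is a sum of rational functions (each arising from an infinite-support indicator function) and ``the coefficient of $\bt^w$'' in a rational function is generally expansion-dependent. I plan to resolve this using \Cref{lem:flipInj}: each summand on the right lies in $\fP_n^{\bz}$, and the left-hand side $c\,\bt^w$ does too (being supported on the single slice $H_{\bz=b}$), so equality of $\Hilb$-images forces equality of the underlying elements of $\fP_n$ slice-by-slice; restricting to the slice $H_{\bz=b}$, evaluation at the lattice point $w$ is unambiguous and produces the desired identity $c = \sum a_\lambda$.
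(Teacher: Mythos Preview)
Your argument for (b) is essentially the paper's: you use a separating hyperplane, the paper quantifies over all directions, but the content is the same application of \Cref{thm:sumOfCones} with an empty sum on the right.

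For (a), however, the swap $\bz \leftrightarrow -\bz$ is not harmless. The conclusion of (a) is a sum over \emph{$\bz$-pointed} cones with vertex at $w$; replacing $\bz$ by $-\bz$ changes this set, so after the swap you would be proving a different formula. In fact the statement as written is false when $w$ is the $\bz$-\emph{maximizing} vertex of $P$: take $n=1$, $C_1=\{x\ge 0\}$ with $a_1=1$, $C_2=\{x\le 1\}$ with $a_2=1$, so $\varphi=1+t$ and $P=[0,1]$; with $\bz=1$, $b=1$, $w=1$, the hypothesis of \Cref{thm:sumOfCones} holds (the only cone with vertex below $b$ is $C_1$, which is $\bz$-pointed), yet no cone with vertex at $1$ is $\bz$-pointed, while the coefficient of $t^1$ is $1\ne 0$. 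So the corollary is really only asserting the case $\{w\}=P^{-\bz}$, which is also the only case the paper ever uses. In that case no swap is needed and the ``in particular'' clause of \Cref{thm:sumOfCones} applies directly; the paper accordingly just says ``(a) is a special case of \Cref{thm:sumOfCones}.'' Your detour through (b) to identify $\varphi|_{H_{\bz=b}}$ as $c\,\bt^w$ is also unnecessary: you only need the coefficient of $\bt^w$, which already equals the coefficient of $\bt^w$ in $\varphi|_{H_{\bz=b}}$.

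Your final worry about extracting the coefficient of $\bt^w$ from a sum of Hilbert series is legitimate, and your proposed fix via \Cref{lem:flipInj} does not quite close it: applying that lemma with the given $\bz$ just returns the same equation, since everything already lives on $H_{\bz=b}$. A clean fix is to pick an irrational $\bz'$, flip each $D_\lambda:=C_\lambda\cap H_{\bz=b}$ in direction $\bz'$, use injectivity of $\Hilb$ on $\fP_n^{\bz'}$ to get an equality of functions on $\ZZ^n$, and evaluate at $w$; by \Cref{lem:flipinhalfspace} the flips that move off $w$ contribute $0$ there, and the rest contribute $1$. The paper does not spell this out either.
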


\begin{proof}
The first statement is a special case of \Cref{thm:sumOfCones}.  For the second statement, observe that for any lattice point $v\in  \operatorname{Newt}(\varphi)$ and any $\bz\in \RR^n$, there must exist a cone $C_\lambda$ such that its vertex $w_\lambda$ satisfies $\langle \bz, w_\lambda\rangle \leq b$ where $b = \langle \bz, v\rangle$, since otherwise $\varphi|_{H_{\bz = b}} = 0$ by \Cref{thm:sumOfCones}.
\end{proof}

We prepare for the proof by noting a useful feature of the cone-flipping operation, starting with the following notion.

\begin{defn} \label{def:irrApprox}
Let $C$ be a pointed cone, and $\bz \in \RR^n$. We say that an irrational $\bz' \in \RR^n$ is an \textbf{irrational approximation} of $\bz$ with respect to $C$, if for every ray generator $v \in \RR^n$ of $C$ it holds that $\langle \bz, v \rangle >0 \implies \langle \bz', v \rangle >0$ and  that $\langle \bz, v \rangle <0 \implies \langle \bz', v \rangle <0$.
	
\end{defn}

Note that an irrational approximation of $\bz$ can always be obtained as a small perturbation of $\bz$.  The following is a minor generalization of \cite[Lemma 2.3]{FS12}, with almost identical proof, which we have included for completeness.

\begin{lem}\label{lem:flipinhalfspace}
	Let $\bz \in \RR^n$, 
	%not necessarily irrational, 
	let $C$ be a pointed cone with vertex at $w$, and let $\bz' \in \RR^n$ be an irrational approximation of $\bz$.  Then its cone flip $C^{\bz'}$ is supported in the half space $\{x \mid \langle \bz,x \rangle \geq  \langle \bz,w \rangle\}$. 
	Furthermore, if $C$ is not contained in $\{x \mid \langle \bz,x \rangle \geq \langle \bz,w \rangle\}$, then $C^{\bz'}$ is supported in the open half space $\{x \mid \langle \bz,x \rangle > \langle \bz,w \rangle\}$; in particular $w \notin C^{\bz'}$.
\end{lem}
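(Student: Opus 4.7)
The plan is to reduce the statement to a direct sign calculation on simplicial cones using the explicit flipping formula \eqref{eq:coneFlip}. After translating so that $w = 0$ (which simply shifts both $C$ and $C^{\bz'}$), I would triangulate $C$ into smooth simplicial pointed cones $\sigma_1, \ldots, \sigma_k$ with common apex $0$. By the linearity of the cone-flip operation (\Cref{lem:coneFlip}), $C^{\bz'}$ is the corresponding signed combination of the $\sigma_j^{\bz'}$, so the first assertion reduces to controlling the support of each simplicial piece.

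For a simplicial $\sigma$ with primitive ray generators $v_0, \ldots, v_{m-1}$, formula \eqref{eq:coneFlip} presents $\sigma^{\bz'}$ as the signed indicator of the half-open cone $\{\sum a_i v_i \mid a_i \geq 0 \text{ if } \langle \bz', v_i\rangle > 0,\ a_i < 0 \text{ if } \langle \bz', v_i\rangle < 0\}$. For any $x = \sum a_i v_i$ in this set, I would compute $\langle \bz, x\rangle = \sum a_i \langle \bz, v_i\rangle$ and control each term by sign: the irrational-approximation hypothesis matches the signs of $\langle \bz', v_i\rangle$ and $\langle \bz, v_i\rangle$ wherever the latter is nonzero, so each summand is $\geq 0$ (nonnegative coefficient times positive), $> 0$ (negative times negative), or $0$ (vanishing $\langle \bz, v_i\rangle$). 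This yields $\langle \bz, x\rangle \geq 0$, proving the first claim.

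For the strict statement, the simplicial case is again immediate: the hypothesis $C \not\subseteq \{\langle \bz, x\rangle \geq 0\}$ forces some ray $v_0$ of $\sigma$ to satisfy $\langle \bz, v_0\rangle < 0$, making the corresponding summand strictly positive. The main obstacle is the non-simplicial case, in which a generic triangulation of $C$ can include simplicial pieces entirely inside the closed halfspace whose flips touch the boundary hyperplane $H_0 := \{\langle \bz, x\rangle = 0\}$. To address this, I would use a pull-triangulation rooted at a ray $v_0$ of $C$ with $\langle \bz, v_0\rangle < 0$ (which exists by hypothesis), so that every top-dimensional simplex of the triangulation contains $v_0$; its flip then lands in the open halfspace by the argument above. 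The lower-dimensional internal faces in the resulting decomposition $\One(C) = \sum_\sigma \One(\mathrm{relint}(\sigma))$ that omit $v_0$ can have support on $H_0$ and require a cancellation argument; I plan to restrict to $H_0$ and invoke \Cref{lem:flipInj} to recast the vanishing on $H_0$ as a lower-dimensional cone-flipping identity, which can then be established by induction on $\dim C$.
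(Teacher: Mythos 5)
Your simplicial case and your treatment of the first (weak-inequality) statement match the paper's proof exactly, and your idea for the strict statement --- triangulate $C$ starting from a ray $v_0$ with $\langle \bz, v_0\rangle < 0$ so that the simplicial case applies to the pieces --- is also the paper's idea. The one place where you diverge, and where your write-up has a real gap, is the very last step. You decompose $\One(C)=\sum_\sigma \One(\operatorname{relint}(\sigma))$ over \emph{all} faces of the star triangulation, which forces you to confront faces omitting $v_0$ (already the apex $\{w\}$ is such a face, and $\One(\{w\})^{\bz'}=\One(\{w\})$ sits on the boundary hyperplane), and you then only sketch a cancellation argument via \Cref{lem:flipInj} and an induction on dimension. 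As stated this is not a proof: you have not identified what lower-dimensional identity the restriction to $H_0$ is supposed to reduce to, nor verified that the signs work out, so the strict statement is not actually established.

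The detour is avoidable. The paper instead uses the signed inclusion--exclusion identity
\[
\One(C)=\sum_{F}(-1)^{\dim C-\dim F}\,\One(F),
\]
where the sum runs only over the \emph{interior} cones of the triangulation, i.e.\ those not contained in the (relative) boundary of $C$. If the triangulation is built by triangulating the faces of $C$ not containing $v_0$ and coning from $v_0$, then any cone of the triangulation omitting $v_0$ lies in a proper face of $C$ and hence is not interior; so every $F$ appearing in the sum contains the ray $v_0$, and by linearity of $(\cdot)^{\bz'}$ each $F^{\bz'}$ lands in the open halfspace by your simplicial computation. No cancellation on $H_0$ and no induction are needed. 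I recommend replacing your final paragraph with this identity; everything before it can stand as written.
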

\begin{proof}
If $C$ is simplicial, the result follows immediately from the construction of cone flips \eqref{eq:coneFlip} and \Cref{def:irrApprox}. For general $C$, we can obtain the first statement by considering any triangulation of $C$. For the second one, choose a ray $v$ of $C$ such that $\langle \bz,v \rangle< 0$ and a triangulation of $C$ such that every interior cone contains $v$. Such a triangulation can for instance be constructed by triangulating the faces of $C$ that do not contain $v$, and then coning that triangulation from $v$.
Now $C=\sum_F(-1)^{\dim C - \dim F}\One(F)$ and $C^{\bz'}=\sum_F(-1)^{\dim C - \dim F}\One(F)^{\bz'}$, where the sum is over all interior cones of the triangulation.  The result now follows from the simplicial case. 
\end{proof}

\begin{proof}[Proof of \Cref{thm:sumOfCones}]
Since the summation defining $\varphi$ is over a finite collection of cones $\{C_\lambda\}_{\lambda\in \Lambda}$, there exists a $\bz' \in \RR$ which is an irrational approximation of $\bz$ with respect to every cone $C_\lambda$.
By assumption $\varphi=\Hilb(f)$, where $f \in \fP_n$ has finite support, in particular $f \in \fP_n^{\bz}$.
Hence, by \Cref{lem:flipInj}, $\varphi|_{H_{\bz=b}}=\Hilb(\sum{a_\lambda\One(C_\lambda^{\bz'} \cap {H_{\bz=b}})})$. 
If $C_\lambda \notin \fP_n^{\bz}$, then by assumption the vertex $w_\lambda$ of $C_\lambda$ satisfies $\langle \bz, w_\lambda \rangle \geq b $, and by \Cref{lem:flipinhalfspace} $C_\lambda^{\bz'}$ is supported on the open half-space $\{x \mid \langle \bz,x \rangle > b\}$, in particular $C_\lambda^{\bz'} \cap H_{\bz=b}=\emptyset$.
If $C_\lambda \in \fP_n^{\bz}$, then since $C_\lambda$ and $C_\lambda^{\bz'}$ are both in $\fP_n^{\bz}$, it follows from \Cref{lem:flipInj} that $\Hilb(C_\lambda^{\bz'} \cap H_{\bz=b})=\Hilb(C_\lambda \cap H_{\bz=b})$.
\end{proof}

\subsection{Flipping cones for base polytopes}
Let us now specialize our discussion of summing lattice point generating functions to ones arising from flag matroids.
For the rest of this section, let $\MM$ be a flag matroid of rank $\br=(r_1,\ldots,r_k)$ on a ground set $[n]$, whose constituent matroids have rank functions $\rk_1,\ldots,\rk_k$.   As before, for a basis $\BB$ of $\MM$ let us write $\operatorname{Cone}_\BB(\MM) := \operatorname{Cone}(Q(\MM) - \be_\BB)$.

\medskip
Consider the expression below, which is a finite summation
\begin{equation} \label{eqn:sumOfMatroidCones}
    \varphi=\sum_{\lambda\in \Lambda}{a_\lambda \bt^{w_\lambda}\Hilb(\operatorname{Cone}_{\BBi}(\MM))},
\end{equation}
where $a_\lambda \in \QQ$, $w_\lambda \in \ZZ^n$, and $\BBi$ a basis of $\MM$.  We allow the same basis to occur several times in the sum.  Note that $\bt^{w_\lambda}\Hilb(\operatorname{Cone}_{\BBi}(\MM))=\Hilb(C_\lambda)$, where $C_\lambda$ is a cone with vertex at $w_\lambda$, so (\ref{eqn:sumOfMatroidCones}) is a special case of (\ref{eqn:sumOfCones}). As before, we assume that $\varphi \in \QQ[t_1^{\pm},\ldots,t_n^{\pm}]$, i.e.\ $\varphi$ is a Laurent polynomial, and we write $P := \operatorname{Conv}(w_\lambda \mid \lambda \in \Lambda)$ for the convex hull of the $w_\lambda$.
We will assume that all $w_\lambda$ lie in $\ZZ_{\geq 0}^n$, and that there exists a $c \in \ZZ_{\geq 0}$ such that the sum of the entries of any $w_\lambda$ is equal to $c$. 
Let $\widetilde{P} := \operatorname{Conv}(\sigma \cdot w_\lambda \mid \sigma \in S_n,\ \lambda \in \Lambda)$ be the convex hull of all points in $\ZZ_{\geq 0}^n$ that are equal to one of the $w_\lambda$ up to permuting entries.

\medskip
The following theorem will be repeatedly applied in the next sections.

\begin{thm}\label{thm:sumOfMatroidCones}
Let $\varphi$, $P$, and $\widetilde P$ be as above, and let $v$ be a vertex of $\widetilde{P}$. Write $v=\be_{S_1}+\cdots+\be_{S_m}$, with $S_1\subseteq \ldots \subseteq S_m \subseteq [n]$. Fix a basis  $\BB=(B_1,\ldots,B_k)$ of $\MM$ such that $\be_\BB$ is a vertex of the face $Q(\MM)^{v}$ of $Q(\MM)$ maximizing the direction $v$, that is, a basis $\BB$ satisfying $|S_i \cap B_j|=\rk_{M_j}(S_i)$ for all $1\leq i\leq m$ and $1\leq j \leq k$ (\Cref{prop:face}).  Then the coefficient of $\bt^v$ in $\varphi \in \QQ[t_1^\pm, \ldots, t_n^\pm]$ is equal to the sum of all $a_\lambda$ for which $w_\lambda=v$ and $\BBi=\BB$.
\end{thm}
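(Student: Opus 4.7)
The plan is to apply part (a) of \Cref{cor:vertexcase} to extract the coefficient of $\bt^v$ in $\varphi$. Concretely, I would seek a direction $\bz \in \RR^n$ such that (i) $v$ is the unique minimizer of $\bz$ on $P$, and (ii) $\be_\BB$ is the unique minimizer of $\bz$ on $Q(\MM)$. With such a $\bz$ in hand the hypothesis of \Cref{thm:sumOfCones} is vacuously satisfied (no $\lambda$ has $\langle\bz, w_\lambda\rangle < \langle\bz,v\rangle$ by (i)), and the corollary identifies the coefficient of $\bt^v$ with $\sum a_\lambda$ over those $\lambda$ for which $C_\lambda \in \fP_n^{\bz}$ and $w_\lambda = v$. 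Since the cone $\bt^{w_\lambda}\Hilb(\Cone_{\BBi}(\MM))$ is $\bz$-pointed precisely when $\be_{\BBi}$ attains $\min_{y \in Q(\MM)} \langle \bz, y \rangle$, condition (ii) forces $\BBi = \BB$, producing exactly the claimed index set.

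I would first dispose of the case $v \notin P$: by part (b) of \Cref{cor:vertexcase}, $\operatorname{Newt}(\varphi) \subseteq P$, so the coefficient of $\bt^v$ vanishes, while the right-hand side is also zero since no $w_\lambda$ equals $v$. When $v \in P$, the assumption that $v$ is a vertex of $\widetilde P$, combined with $P \subseteq \widetilde P$, implies that $v$ is already a vertex of $P$, so $v = w_{\lambda_0}$ for some $\lambda_0 \in \Lambda$.

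Producing $\bz$ is the main technical step. I would seek $\bz$ in the intersection of the (open) inner normal cones of $P$ at $v$ and of $Q(\MM)$ at $\be_\BB$. The natural starting point is $\bz_0 = -v$, which lies in the inner normal cone of $Q(\MM)$ at the face $Q(\MM)^v$ essentially by definition of $Q(\MM)^v$; hence a small perturbation $\bz = -v + \delta \bz'$ with $\bz'$ in the relative interior of the normal cone of $Q(\MM)^v$ at $\be_\BB$ and $\delta>0$ sufficiently small will satisfy (ii). The main obstacle is condition (i), equivalently showing that $-v$ lies in the inner normal cone of $P$ at $v$ with $v$ as the unique minimum---i.e., $\langle v, w_\lambda\rangle < \langle v, v\rangle$ for all $\lambda$ with $w_\lambda \neq v$. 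I would prove this by combining the $\mathfrak S_n$-invariance of $\widetilde P$ with the Hardy--Littlewood--P\'olya/Birkhoff characterization of majorization---exploiting that a vertex $v$ of $\widetilde P$ cannot be strictly majorized by any $w_\lambda$, since otherwise $v$ would be a nontrivial convex combination of permutations of $w_\lambda$---together with an Abel-summation argument against the sorted partial sums of $v$, in which the nested form $v = \be_{S_1}+\cdots+\be_{S_m}$ arranges the drops in $v^\downarrow$ at positions compatible with the majorization inequalities, thereby promoting the combinatorial fact to the required strict inequality.
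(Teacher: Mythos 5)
Your overall strategy matches the paper's: reduce to \Cref{cor:vertexcase}\ref{vertexcase:1} by producing a single direction $\bz$ that simultaneously exposes $v$ as a vertex of $P$ and $\be_\BB$ as a vertex of $Q(\MM)$, and your handling of the case $v\notin P$ and of the identification of the $\bz$-pointed cones is fine. The gap is in how you produce $\bz$. Your plan hinges on the claim that $-v$ already exposes $v$ on $P$, i.e.\ that $\langle v, w_\lambda\rangle < \langle v,v\rangle$ for every $w_\lambda\neq v$, and you propose to deduce this from ``$v$ is not strictly majorized by any $w_\lambda$'' via Abel summation. This is where the argument breaks: majorization is only a \emph{partial} order, so a vertex $v$ of $\widetilde P$ and a generator $w_\lambda$ can be incomparable, in which case the Abel sum $\sum_j (v_j-v_{j+1})(W_j-V_j)$ has terms of both signs and need not be positive. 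Concretely, take $n=4$, $v=(6,4,0,0)$ and $w=(8,1,1,0)$, both in $\ZZ_{\geq 0}^4$ with coordinate sum $10$. Then $v$ is a vertex of $\widetilde P=\Conv(\mathfrak S_4\cdot v\cup \mathfrak S_4\cdot w)$ (it is exposed by, e.g., $(1.1,1,0,-1)$), the two points are majorization-incomparable, and yet $\langle v,w\rangle = 48+4 = 52 = 36+16 = \langle v,v\rangle$. So with $\bz=-v$ the minimizing face of $P$ can contain points other than $v$, \Cref{cor:vertexcase}\ref{vertexcase:1} does not apply, and the coefficient of $\bt^v$ would pick up contributions from slices of cones based at $w$. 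Your perturbation $\bz=-v+\delta\bz'$ does not repair this, because $\bz'$ is chosen from the normal data of $Q(\MM)$ and has no reason to break the tie between $v$ and $w$ inside the face $P^{v}$.

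The paper's proof avoids this entirely by \emph{not} starting from $-v$: it takes an arbitrary functional $\bz'$ exposing $v$ on $\widetilde P$ (which exists because $v$ is a vertex), and then uses the $\mathfrak S_n$-symmetry of $\widetilde P$ to arrange, after permuting coordinates so that $v$ is weakly decreasing, that $\bz'$ has strictly decreasing entries. Such a $\bz'$ lies in the interior of a maximal braid cone having $\Cone(\be_{S_1},\ldots,\be_{S_m})+\RR\be_{[n]}$ as a face, and since the normal fan of $Q(\MM)$ coarsens the braid fan (\Cref{thm:flagMatroidBasePolytope}), this same $\bz'$ exposes a vertex of $Q(\MM)^{v}$, which the tie-breaking convention identifies as $\be_\BB$. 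If you want to salvage your write-up, you should replace the $-v$/majorization step by this argument (or otherwise prove that an exposing functional for $v$ on $\widetilde P$ can be chosen to be strictly decreasing in the appropriate coordinate order); as it stands, the key inequality you rely on is false.
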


\begin{proof}
Since the Newton polytope of $\varphi$ is contained in $P$ (\Cref{cor:vertexcase}.\ref{vertexcase:2}), the result is true for $v \notin P$.
So, we now consider the case $v \in P$.  Let us write $v = (v_1, \ldots, v_n)$ and $\be_\BB = (b_1, \ldots, b_n)$.  By permuting the coordinates of $\mathbb{N}^n$, we may assume that $v_i\geq v_{i+1}$ for all $i \in [n]$, and that $b_i \geq b_{i+1}$ whenever $v_i=v_{i+1}$.

We first show that we may choose a $\bz' =(\zeta_1', \ldots, \zeta_n')\in \RR^n$ that satisfies the following properties:
\begin{enumerate}[label=(\roman*)]
    \item\label{item:bz} The vertex $\{v\}$ is the face of $\widetilde P$ maximizing in the $\bz'$ direction, and hence is the vertex of $P$ maximizing in the $\bz'$ direction.
    \item $\zeta_1' > \zeta_2' > \ldots > \zeta_n'$.
\end{enumerate}
To choose such a $\bz'$, start with any $\bz'$ satisfying \ref{item:bz}, which by perturbing the entries, we may assume to have all distinct entries. For any pair $i,j \in [n]$ such that $v_i > v_j$, we have $\zeta_i' > \zeta_j'$, since else we can swap $v_i$ and $v_j$ and obtain a vertex of $\widetilde{P}$ where $\bz'$ attains a larger value. For any collection $i, i+1, \ldots, j \in [n]$ such that $v_{i} = \cdots = v_{j}$, we may reorder the corresponding entries of $\bz'$ in decreasing order, since such a reordering does not change the value of $\langle \bz', v \rangle$.  This procedure produces the desired $\bz'$ since we had assumed $(v_1, \ldots, v_n)$ to be weakly decreasing.

We then claim that the vertex face of $Q(\MM)$ maximizing in the $\bz'$ direction is $\{\be_\BB\}$. Indeed, note that $\bz'$ is an interior point in the cone  
\[
\operatorname{Cone}(\be_1, \be_1+\be_2, \ldots, \be_1+\cdots + \be_{n-1}) + \RR\be_{[n]},
\]
of which the cone
\[
\operatorname{Cone}(\be_{S_1}, \be_{S_2}, \ldots, \be_{S_m}) + \RR\be_{[n]}
\]
is a face.  This face contains $v$ in its relative interior.  These two cones are cones in the braid arrangement, of which the normal fan of $Q(\MM)$ is a coarsening (\Cref{thm:flagMatroidBasePolytope}).  Thus, the vertex face of $Q(\MM)$ maximizing in the $\bz'$ direction is among the vertices of $Q(\MM)^{v}$, and our assumption $b_i \geq b_{i+1}$ for all $i = 1, \ldots, n$ such that $v_i = v_{i+1}$ ensures that $\be_\BB$ is indeed the one.  Now, applying \Cref{cor:vertexcase}.\ref{vertexcase:1} with $\bz = - \bz'$ gives the desired statement.
\end{proof}

\begin{eg}\label{eg:illustration}
We illustrate \Cref{thm:generalbrion}, \Cref{thm:sumOfCones}, and \Cref{thm:sumOfMatroidCones} in an example. Let $\MM$ be the flag matroid $(U_{1,3},U_{2,3})$. Its base polytope $Q(\MM)$ is drawn on the left in \Cref{fig:eg}. We arrange the six vertex cones of $Q(\MM)$ as on the right hand side of the figure,
getting a summation
\[
\begin{split}
\varphi =& t_1t_2\Hilb(\Cone_{(2,1,0)}(\MM)) + t_1t_3\Hilb(\Cone_{(2,0,1)}(\MM)) \qquad(\text{in blue})\\
& + t_2^2\Hilb(\Cone_{(1,2,0)}(\MM))
+t_3^2\Hilb(\Cone_{(1,0,2)}(\MM)) \qquad(\text{in green})\\
& + t_2^2\Hilb(\Cone_{(0,2,1)}(\MM)) + t_3^2\Hilb(\Cone_{(0,1,2)}(\MM)) \qquad(\text{in red})
\end{split}
\]

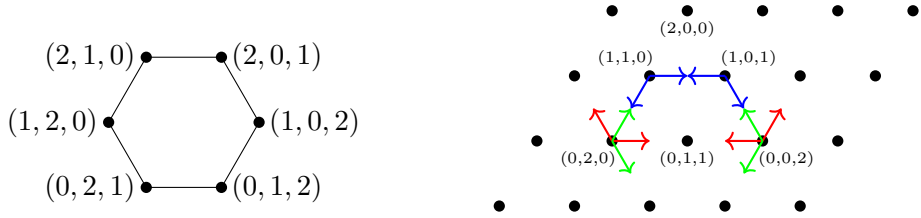
\begin{figure}[h]
\usetikzlibrary{calc}
\begin{tikzpicture}
         \foreach\i/\j in {0/0,1/0,-1/1,1/1,-1/2,0/2}{
     \node at (\i+0.5*\j,{\j*sqrt(3)*0.5}) [circle,fill,inner sep=1.5pt]{};
     }
    \draw[] (0,0)--(1,0)--(1.5,{0.5*sqrt(3)})--(1,{sqrt(3)})--(0,{sqrt(3)})--(-0.5,{0.5*sqrt(3)})--cycle;
    \node at (0,0) [left]{$(0,2,1)$};
    \node at (-0.5,{0.5*sqrt(3)}) [left]{$(1,2,0)$};
    \node at (0,{sqrt(3)}) [left]{$(2,1,0)$};
     \node at (1,0) [right]{$(0,1,2)$};
    \node at (1.5,{0.5*sqrt(3)}) [right]{$(1,0,2)$};
    \node at (1,{sqrt(3)}) [right]{$(2,0,1)$};
\end{tikzpicture}
\qquad\qquad
\begin{tikzpicture}
    \foreach \i in {-1, 0, 1, 2, 3}{\foreach \j in {-1,0,1,2}{
    \node at (\i+0.5*\j,{\j*sqrt(3)*0.5}) [circle,fill,inner sep=1.5pt]{};
    }}
    \draw[->,red,thick] (0,0)--(0.5,0);
    \draw[->,red,thick] (2,0)--(1.5,0);
    \draw[->,red,thick] (0,0)--(-0.25,{sqrt(3)*0.25});
    \draw[->,red,thick] (2,0)--(2.25,{sqrt(3)*0.25});
    \draw[->,green,thick] (0,0)--(0.25,{-sqrt(3)*0.25});
    \draw[->,green,thick] (2,0)--(1.75,{-sqrt(3)*0.25});
    \draw[->,green,thick] (0,0)--(0.25,{sqrt(3)*0.25});
    \draw[->,green,thick] (2,0)--(1.75,{sqrt(3)*0.25});
    \draw[->,blue,thick] (0.5,{sqrt(3)*0.5})--(0.25,{sqrt(3)*0.25});
    \draw[->,blue,thick] (0.5,{sqrt(3)*0.5})--(1,{sqrt(3)*0.5});
    \draw[->,blue,thick] (1.5,{sqrt(3)*0.5})--(1.75,{sqrt(3)*0.25});
    \draw[->,blue,thick] (1.5,{sqrt(3)*0.5})--(1,{sqrt(3)*0.5});
    \node at (0,0) [below left,xshift=0.5em]{$\scriptscriptstyle{(0,2,0)}$};
    \node at (1,0) [below]{$\scriptscriptstyle{(0,1,1)}$};
    \node at (2,0) [below right,xshift=-0.5em]{$\scriptscriptstyle{(0,0,2)}$};
    \node at (0.5,{sqrt(3)*0.5}) [above left,xshift=0.5em]{$\scriptscriptstyle{(1,1,0)}$};
    \node at (1.5,{sqrt(3)*0.5}) [above right,xshift=-0.5em]{$\scriptscriptstyle{(1,0,1)}$};
    \node at (1,{sqrt(3)}) [below]{$\scriptscriptstyle{(2,0,0)}$};
\end{tikzpicture}
\caption{Base polytope $Q(\MM)$ and translates of its vertex cones}
\label{fig:eg}
\end{figure}

   % \foreach\i/\j in {0/0,1/0,2/0,0/1,1/1,0/2}{
%    \node at (\i+0.5*\j,{\j*sqrt(3)*0.5}) {$(i,j)$};
%    }

By generalized Brion's theorem (second half of  \Cref{thm:generalbrion}), one may replace the two cones at $(0,2,0)$, one red and one green, by a single cone $\operatorname{Cone}(\be_1-\be_2, \be_3 - \be_2)$, and likewise for the two cones at $(0,0,2)$.  Then $\varphi$ is now a summation of the four vertex cones of the trapezoid $\operatorname{Conv}(2\be_2, 2\be_3, \be_1+\be_2, \be_1+\be_3)$, whose value by Brion's theorem (first half of \Cref{thm:generalbrion}) is 
\[
\varphi=t_1t_2+t_1t_3+t_2^2+t_2t_3+t_3^2.
\]

If we apply \Cref{thm:sumOfCones} with $\bz=\be_1$ and $b=0$, noting that the $\be_1$-pointed cones are $\Cone_{(0,2,1)}(\MM)$ and $\Cone_{(0,1,2)}(\MM)$, colored red in the figure, we find that, as expected,
\[
\varphi|_{H_{\be_1=0}}=\frac{t_2^2}{1-t_3t_2^{-1}}+\frac{t_3^2}{1-t_2t_3^{-1}}=\frac{t_2^3-t_3^3}{t^2-t_3}=t_2^2+t_2t_3+t_3^2.
\]

To apply \Cref{thm:sumOfMatroidCones}, we first note that the polytope $\widetilde{P}$ is the convex hull of $(2,0,0),(0,2,0),(0,0,2)$. Since there are no cones placed at $(2,0,0)$, \Cref{thm:sumOfMatroidCones} says that the coefficient of $t_1^2$ is equal to $0$. For the coefficient of $t_2^2$, we can take either $\BB=(1,2,0)$ or $\BB=(0,2,1)$; in both cases \Cref{thm:sumOfMatroidCones} tells us that the coefficient of $t_2^2$ is equal to $1$.
\end{eg}

In \Cref{eg:illustration}, because we had only one translate of each vertex cone, arranged in a suitable manner, we could apply Brion's theorem to compute $\varphi$. In subsequent sections, we will typically have several parallel translates of each vertex cone, where \Cref{thm:sumOfCones} or \Cref{thm:sumOfMatroidCones} will better suit our needs.

\section{The Las Vergnas Tutte polynomial of a matroid quotient}\label{section:LVT}

In \cite{LV75}, Las Vergnas introduced a Tutte polynomial of a matroid quotient as follows, and studied its properties in a series of subsequent works \cite{LV80, LV84, LV99, ELV04, LV07, LV13}.  The reader may find the survey \cite{LV80} particularly useful.

\begin{defn}\label{defn:LVT}
Let $\MM = (M_1, M_2)$ be a two-step flag matroid on a ground set $[n]$.  For $i = 1,2$ write $r_i$ for the rank of $M_i$ and $r_i(S)$ for the rank of $S\subseteq [n]$ in $M_i$.  The \textbf{Las Vergnas Tutte polynomial} of $\MM$ is
\begin{equation}\label{eqn:LVT}
LV\mathcal T_{\MM}(x,y,z) := \sum_{S \subseteq [n]} (x-1)^{r_1 - r_1(S)}(y-1)^{|S| - r_2(S)} z^{r_2 - r_2(S) - (r_1 - r_1(S))}
\end{equation}
\end{defn}

For the remainder of this section, we let $\MM$ be a two-step flag matroid, i.e.\ a matroid quotient $M_1 \twoheadleftarrow M_2$ on a ground set $[n]$.  We show in this section that $LV\mathcal T_{\MM}$ arises $K$-theoretically from $y(\MM)$.  We start by recalling the construction \eqref{construction2} of $\widetilde{Fl}(1,r_1,r_2,n-1;n)$ in \S\ref{section:fundcomp} with the maps
\[
\xymatrix{
& &\widetilde{Fl}(1,r_1,r_2,n-1;n) \ar[ld]_{\widetilde\pi_\br} \ar[rd]^{\widetilde\pi_{(n-1)1}} &\\
&Fl(r_1,r_2;n) & & (\PP^{n-1})^\vee \times \PP^{n-1}.
}
\]
We have an inclusion of tautological vector bundles $0 \to \mathcal S_1 \to \mathcal S_2$ on the flag variety $Fl(r_1,r_2;n)$.  Let $\mathcal S_2/\mathcal S_1$ be the quotient bundle.

\begin{thm}\label{thm:LVT}
With the notations as above, we have
\begin{equation}\label{eqn:KLVT}
LV\mathcal T_{\MM}(\alpha, \beta, w) =  \sum_{m=0}^{r_2-r_1} ({\widetilde \pi}_{(n-1)1})_* {\widetilde \pi}_\br^* \Big(  y(\MM) [\OO(0,1)][\bigwedge^m(\mathcal S_2/\mathcal S_1)] \Big)w^m
\end{equation}
as elements in $K^0((\PP^{n-1})^\vee\times \PP^{n-1})[w]$.
\end{thm}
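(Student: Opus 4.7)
The strategy is to establish a torus-equivariant refinement and specialize. Applying \Cref{prop:fundcomp} with $k = 2$ to the right-hand side of \eqref{eqn:KLVT} and substituting $u = \alpha - 1$, $v = \beta - 1$, the theorem reduces to proving, for every triple $(p, q, m)$,
\[
\chi\Big( y(\MM)\cdot [\OO(0,1)] \cdot [\bigwedge^m (\mathcal S_2/\mathcal S_1)] \cdot [\bigwedge^p \mathcal S_1] \cdot [\bigwedge^q \mathcal Q_2^\vee]\Big) = \big|\{S \subseteq [n] : r_1(S) = r_1 - p,\ r_2(S) = r_2 - p - m,\ |S| = r_2 - p - m + q\}\big|,
\]
since the Las Vergnas definition expands as $LV\mathcal T_\MM(u+1, v+1, w) = \sum_S u^{r_1 - r_1(S)}v^{|S|-r_2(S)}w^{r_2 - r_2(S) - (r_1 - r_1(S))}$.

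I lift this to a torus-equivariant statement: I aim to show
\[
\chi^T\Big( y(\MM) \cdot [\OO(0,1)] \cdot [\bigwedge^m (\mathcal S_2/\mathcal S_1)] \cdot [\bigwedge^p \mathcal S_1] \cdot [\bigwedge^q \mathcal Q_2^\vee]\Big) = \sum_{S \textnormal{ as above}} \bt^{\be_S},
\]
from which the non-equivariant identity follows by the augmentation $\bt = 1$. Using the localization formula \Cref{thm:localization}.(3) together with the character data of \S\ref{subsection:GKZ} --- notably $[\OO(0,1)]^T(x_\SS) = \bt^{\be_{S_2}}$ and $[\bigwedge^m(\mathcal S_2/\mathcal S_1)]^T(x_\SS) = \sum_{A \subseteq S_2 \setminus S_1,\, |A| = m} \bt^{-\be_A}$ --- and exploiting the cancellation between $y(\MM)^T(x_\SS) = \Hilb_\SS(\MM)\prod_{Ex(\SS)}(1 - t_i^{-1}t_j)$ and the localization denominators, the equivariant Euler characteristic rewrites as
\[
\sum_{\SS = (S_1, S_2) \in \mathcal B(\MM)} \sum_{A,\, A',\, A''} \bt^{\be_T} \cdot \Hilb_\SS(\MM),
\]
where the inner sum runs over $A \subseteq S_2 \setminus S_1$, $A' \subseteq S_1$, $A'' \subseteq [n] \setminus S_2$ of sizes $m, p, q$, and $T = (S_1 \setminus A') \sqcup ((S_2 \setminus S_1) \setminus A) \sqcup A''$ always has size $r_2 - p - m + q$.

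This expression has exactly the form $\sum_\lambda a_\lambda \bt^{w_\lambda} \Hilb_{\BB_\lambda}(\MM)$ in \eqref{eqn:sumOfMatroidCones}, with $a_\lambda = 1$ and all $w_\lambda = \be_T \in \{0,1\}^n$. The associated polytope $\widetilde P$ is therefore a $0/1$ polytope, so every one of its lattice points is a vertex, and by \Cref{cor:vertexcase}.\ref{vertexcase:2} the Newton polytope of the resulting Laurent polynomial $\varphi$ is contained in $\widetilde P$. For each $S \subseteq [n]$ with $|S| = r_2 - p - m + q$, \Cref{prop:face} furnishes a basis $\BB = (B_1, B_2)$ of $\MM$ with $|B_j \cap S| = r_j(S)$; \Cref{thm:sumOfMatroidCones} then identifies the coefficient of $\bt^{\be_S}$ in $\varphi$ with the number of quadruples $(\SS, A, A', A'')$ satisfying $\SS = \BB$ and $T = S$. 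The system $T = S$ forces $A' = B_1 \setminus S$, $A = (B_2 \setminus B_1) \setminus S$, $A'' = S \setminus B_2$, and the required size constraints $|A'| = p$, $|A| = m$, $|A''| = q$ translate exactly into $r_1(S) = r_1 - p$, $r_2(S) = r_2 - p - m$, $|S| = r_2 - p - m + q$. So the coefficient is $1$ or $0$ according to whether $S$ belongs to the Las Vergnas enumeration, as desired.

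The main obstacle will be the equivariant bookkeeping: arranging the four index sets $(\SS, A, A', A'')$ so that the combined exponent $\be_{S_2} - \be_A - \be_{A'} + \be_{A''}$ collapses to a clean $0/1$ vector $\be_T$ of constant weight, which is what makes $\widetilde P$ a $0/1$ polytope and lets \Cref{thm:sumOfMatroidCones} apply coefficient-by-coefficient. Once that reduction is in place, the match between the constraints $\SS = \BB$, $|A'| = p$, $|A| = m$, $|A''| = q$ on the one side, and the rank-deficiency conditions defining the Las Vergnas sum on the other, is a direct verification.
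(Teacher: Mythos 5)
Your proposal is correct and follows essentially the same route as the paper: reduce via \Cref{prop:fundcomp} to an identity of (equivariant) Euler characteristics, apply localization (\Cref{thm:localization}.(3)) and re-index the exponents into $0/1$ vectors $\be_T$, then extract coefficients with \Cref{thm:sumOfMatroidCones} and \Cref{prop:face}, matching the constraints on $(p,q,m)$ to the corank--nullity data in the Las Vergnas sum. This is precisely the paper's proof of the equivariant statement \Cref{thm:equivLVT}, from which \Cref{thm:LVT} follows.
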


We will prove the stronger statement that the $T$-equivariant version of \Cref{thm:LVT} holds.  By \Cref{prop:fundcomp}, we have that the following equality implies \Cref{thm:LVT}:
\begin{equation}\label{eqn:KLVT2}
LV\mathcal T_{\MM}(u+1,v+1,w) = \sum_{p,q,m} \chi \Big( y(\MM) [\OO(0,1)][\bigwedge^p \mathcal S_1][\bigwedge^q \mathcal Q_2^\vee][\bigwedge^m (\mathcal  S_2/\mathcal S_1)] \Big) u^pv^qw^m.
\end{equation}
We thus define the \textbf{$T$-equivariant Las Vergnas Tutte polynomial} of $\MM$ by
\[
LV\mathcal T^T_\MM(u+1, v+1, w) := \sum_{p,q,m} \chi^T \Big( y(\MM)^T  [\OO(0,1)]^T[\bigwedge^p \mathcal S_1]^T[\bigwedge^q \mathcal Q_2^\vee]^T[\bigwedge^m (\mathcal S_2/\mathcal S_1)]^T \Big) u^pv^qw^m.
\]

\begin{thm}\label{thm:equivLVT}
With the notations as above, we have
\[
LV\mathcal T^T_\MM(u+1,v+1,w) = \sum_{S\subseteq [n]} \bt^{\be_S} u^{r_1 - r_1(S)} v^{|S| - r_2(S)} w^{r_2 - r_1 - r_2(S) + r_1(S)}.
\]
\end{thm}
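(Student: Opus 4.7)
My plan is to apply equivariant localization to $\chi^T$ and then extract $\bt^{\be_S}$-coefficients using \Cref{thm:sumOfMatroidCones}. By the restriction formulas at the end of \S\ref{subsection:GKZ}, at each $T$-fixed point $x_\SS$ of $Fl(r_1,r_2;n)$ with $\SS=(S_1,S_2)$ one has $[\OO(0,1)]^T(x_\SS) = \bt^{\be_{S_2}}$, as well as $\sum_p[\bigwedge^p\mathcal S_1]^T(x_\SS) u^p = \prod_{i\in S_1}(1+ut_i^{-1})$, $\sum_q[\bigwedge^q\mathcal Q_2^\vee]^T(x_\SS) v^q = \prod_{j\notin S_2}(1+vt_j)$, and $\sum_m[\bigwedge^m(\mathcal S_2/\mathcal S_1)]^T(x_\SS) w^m = \prod_{k\in S_2\setminus S_1}(1+wt_k^{-1})$. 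Applying \Cref{thm:localization}(3) with target a point, the denominator $\prod_{(i,j)\in Ex(\SS)}(1-t_i^{-1}t_j)$ cancels exactly against the matching factor inside $y(\MM)^T(x_\SS)$, and only bases $\SS\in\mathcal B(\MM)$ contribute, yielding
\[
LV\mathcal T^T_\MM(u+1,v+1,w)=\sum_{\SS\in\mathcal B(\MM)}\Hilb_\SS(\MM)\,\bt^{\be_{S_2}}\prod_{i\in S_1}(1+ut_i^{-1})\prod_{j\notin S_2}(1+vt_j)\prod_{k\in S_2\setminus S_1}(1+wt_k^{-1}).
\]

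Next, I would expand each of the three products as a sum over subsets $A\subseteq S_1$, $B\subseteq[n]\setminus S_2$, and $C\subseteq S_2\setminus S_1$. For each fixed triple $(p,q,m)$, the coefficient of $u^p v^q w^m$ then becomes a finite sum of translated lattice cones $\bt^{\be_{S_2}-\be_A+\be_B-\be_C}\Hilb_\SS(\MM)$, which is a Laurent polynomial in $\bt$ since $\chi^T$ lands in $K^0_T(\mathrm{pt})=\ZZ[\bt^\pm]$. Since $A,B,C$ are pairwise disjoint with $A\cup C\subseteq S_2$ and $B\cap S_2=\emptyset$, each cone vertex $\be_{S_2}-\be_A+\be_B-\be_C$ simplifies to $\be_T$ for some $T\subseteq[n]$; in particular every vertex is a $0/1$-vector with common coordinate-sum $r_2-p+q-m$, so the convex hull lies in $[0,1]^n$ and every $\be_S$ that appears is automatically a vertex. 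This places us squarely within the hypotheses of \Cref{thm:sumOfMatroidCones}.

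For each $S\subseteq[n]$ I would fix a basis $\BB=(B_1,B_2)$ of $\MM$ satisfying $|B_j\cap S|=r_j(S)$ (available by \Cref{prop:face}) and apply \Cref{thm:sumOfMatroidCones} with $v=\be_S$. The equation $\be_{B_2}-\be_A+\be_B-\be_C=\be_S$, together with the constraints $A\subseteq B_1$, $B\cap B_2=\emptyset$, and $C\subseteq B_2\setminus B_1$, forces the unique solution
\[
A=B_1\setminus S,\qquad B=S\setminus B_2,\qquad C=(B_2\setminus B_1)\setminus S,
\]
whose cardinalities are $r_1-r_1(S)$, $|S|-r_2(S)$, and $(r_2-r_2(S))-(r_1-r_1(S))$ respectively. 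Hence the coefficient of $\bt^{\be_S}$ in the $u^p v^q w^m$ slice equals $1$ exactly when $(p,q,m)$ is this triple, and $0$ otherwise; summing over $(p,q,m)$ and $S$ then delivers the claimed formula.

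The main obstacle is the polyhedral book-keeping in the last step: one must verify (i) that for the matching $(p,q,m)$ the point $\be_S$ really lies in the relevant polytope $\widetilde P$ -- which is immediate from exhibiting the tuple $(\BB,A,B,C)$ above -- and (ii) that for mismatched $(p,q,m)$ no other tuple with the rank-compatible $\BB$ can hit $\be_S$, which is exactly the uniqueness of $(A,B,C)$. A sanity check is that, although different rank-compatible $\BB$'s produce different witnessing tuples, each still contributes $1$, consistent with $\BB$-independence of the actual coefficient. The remaining steps are routine expansion of the localization formulas from \S\ref{subsection:GKZ}.
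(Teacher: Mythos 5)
Your proposal is correct and follows essentially the same route as the paper: equivariant localization via \Cref{thm:localization}(3) to express $LV\mathcal T^T_\MM$ as a sum of translated vertex cones $\Hilb(\Cone_\SS(\MM))$, followed by extraction of the $\bt^{\be_S}$-coefficient via \Cref{thm:sumOfMatroidCones} with a rank-compatible basis, where the unique witnessing tuple gives exactly the exponents $r_1-r_1(S)$, $|S|-r_2(S)$, $r_2-r_1-r_2(S)+r_1(S)$. The only (cosmetic) difference is that the paper complements the subsets of $B_1$ and $B_2\setminus B_1$ so that all cone vertices are written as $\be_\pp+\be_\mm+\be_\qq$ with nonnegative entries from the start, whereas you keep the raw expansion $\be_{S_2}-\be_A+\be_B-\be_C$ and observe afterwards that it is a $0/1$-vector.
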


\begin{proof}
First, it follows from \Cref{thm:localization}.(3) that
\begin{multline} \label{eq:LVCones}
\sum_{p,q,m} \chi^T \Big( y(\MM)^T [\OO(0,1)]^T[\bigwedge^p \mathcal S_1]^T[\bigwedge^q \mathcal Q_2^\vee]^T[\bigwedge^m (\mathcal S_2/\mathcal S_1)]^T \Big) u^pv^qw^m\\
= \sum_{\tiny \begin{matrix}\BB = (B_1, B_2),\\ \BB \in \mathcal{B}(\MM)\end{matrix} } \Hilb(\Cone_\BB(\MM)) \cdot  \bt^{\be_{B_2}} \Big( \sum_{\pp' \subseteq B_1} \bt^{-\be_{\pp'}} u^{|\pp'|} \Big) \Big(\sum_{\qq\subset [n]\setminus B_2} \bt^{\be_\qq}v^{|\qq|} \Big) \Big(\sum_{\mm' \subseteq B_2\setminus B_1} \bt^{-\be_{\mm'}}w^{|\mm'|}\Big)  \\
= \sum_{\BB\in \mathcal B(\MM)} \Hilb(\Cone_\BB(\MM)) \cdot   \Big( \bt^{\be_{B_1}}\sum_{\pp' \subseteq B_1} \bt^{-\be_{\pp'}} u^{|\pp'|} \Big) \Big(\sum_{\qq\subset [n]\setminus B_2} \bt^{\be_\qq}v^{|\qq|} \Big) \Big( \bt^{\be_{B_2\setminus B_1}}\sum_{\mm' \subseteq B_2\setminus B_1} \bt^{-\be_{\mm'}}w^{|\mm'|}\Big)  \\
=\sum_{\BB \in \mathcal B(\MM)} \Hilb(\Cone_{\BB}(\MM)) \sum_{
\tiny\begin{matrix} \pp \subseteq B_1\\ \mm \subseteq B_2\setminus B_1\\ \qq \subseteq [n]\setminus B_2 \end{matrix}} \bt^{\be_\pp + \be_\mm + \be_\qq} u^{r_1 - |\pp|}v^{|\qq|}w^{r_2 - r_1 - |\mm|}.
\end{multline}

We can now compute the sum 
\[
\sum_{\BB \in \mathcal B(\MM)} \Hilb(\Cone_{\BB}(\MM)) \sum_{
\tiny\begin{matrix} \pp \subseteq B_1, |\pp|=p\\ \mm \subseteq B_2\setminus B_1, |\mm|=m\\ \qq \subseteq [n]\setminus B_2, |\qq|=q \end{matrix}} \bt^{\be_\pp + \be_\mm + \be_\qq}.
\]
for fixed $p,m,q$.

To compute the coefficient of $\bt^{\be_S}$, we apply Theorem \ref{thm:sumOfMatroidCones}.  Pick a basis $(B_1,B_2)$ such that $|S\cap B_1|=\rk_1(S)$ and $|S\cap B_2|=\rk_2(S)$. We need to compute the number of terms in the sum above for which $\BB=(B_1,B_2)$ and $\be_\pp+\be_\mm+\be_\qq=\be_S$. But such a term needs to satisfy $\pp=S \cap B_1$, $\pp \cup \mm=S\cap B_2$, and $\pp \cup \mm \cup \qq=S$. In particular $p=\rk_1(S)$, $p+m=\rk_2(S)$, and $p+m+q=|S|$. If these three equalities are satisfied, there is indeed exactly one such term. 
\end{proof}

\begin{rem}\label{rem:LVTtoT}
We remark that the Las Vergnas polynomial, and our $K$-theoretic interpretation of it, generalizes the Tutte polynomial of a matroid in the following ways.  Recall that any matroid $M$ has two canonical matroid quotients, $M\twoheadrightarrow M$ and $M \twoheadrightarrow U_{0,n}$.
\begin{itemize}
\item When $\MM = (M)$ (i.e.\ one constituent), the equation \eqref{eqn:KLVT} reduces to the one in \Cref{thm:tuttematroid} \cite[Theorem 5.1]{FS12}.
\item When $\MM = (M,M)$, one can observe from \eqref{eqn:LVT} or \eqref{eqn:KLVT} that $LV\mathcal T_\MM(x,y,z) = T_M(x,y)$.
\item When $\MM = (U_{0,n},M)$, one can observe from \eqref{eqn:LVT} or \eqref{eqn:KLVT2} that $LV\mathcal T_\MM(x,y,z) = T_M(z+1,y)$.
\end{itemize}
\end{rem}

\begin{rem}\label{rem:LVTdelcont}
The Las Vergnas Tutte polynomial satisfies a deletion-contraction relation similar to that of the Tutte polynomial \cite[Proposition 5.1]{LV80}.  We remark that our "cone-flipping with slices" (\Cref{thm:sumOfCones}) can be used to show deletion-contraction relation for $LV\mathcal T_\MM$ and $T_M$.  For example, if $i \in [n]$ is neither a loop nor a coloop of $M_2$,
\[
LV\mathcal T_{M_1, M_2}(x,y,z) = LV\mathcal T_{M_1 \setminus i, M_2 \setminus i}(x,y,z) + LV\mathcal T_{M_1/i, M_2/i}(x,y,z).
\]
This identity is obtained by applying \Cref{thm:sumOfCones} to \eqref{eq:LVCones} as follows.  By considering $\bz=\be_i$, we find that the terms in \eqref{eq:LVCones} that are not divisible by $t_i$ sum to $LV\mathcal T^{T'}_{M_1 \setminus i, M_2 \setminus i}(x,y,z)$, where $T'= (\CC^*)^{n-1}$. By considering $\bz=-\be_i$, we find that the terms that are divisible by $t_i$ sum to $t_i LV\mathcal T^{T'}_{M_1 / i, M_2 / i}(x,y,z)$. We leave the details to the reader.
\end{rem}

\begin{rem}
Unlike the Tutte polynomials of matroids, the constant term of $LV\mathcal T_\MM$ is no longer necessarily zero.  This reflects the fact that for most $\mathbf L \in Fl(r_1,r_2;n)$, the map $\widetilde\pi_{(n-1)1}: \widetilde\pi_\br^{-1}(\overline{T\cdot {\mathbf L}})\to (\PP^{n-1})^\vee \times \PP^{n-1}$ is surjective.  If further $r_2 - r_1 = 1$, then this map is a finite morphism, and by a similar computation as in \cite[Theorem 5.1]{Spe09}, one can show that the degree of the map is the Crapo's beta invariant $\beta(N)$ where $N$ is a matroid such that $M_1 = N/e$ and $M_2 = N\setminus e$.
\end{rem}

\begin{rem}
It follows from \Cref{rem:valuative} and \Cref{thm:LVT} that the assignment $\MM \mapsto LV\mathcal T_\MM$ is valuative.
\end{rem}

\section{flag-geometric Tutte polynomial of a flag matroid}\label{section:KT}

In this section, we explore the behavior of another notion of Tutte polynomials of flag matroids that differs from that of Las Vergnas in the previous section.  Here, instead of the construction \eqref{construction2}, we consider the more geometrically natural construction \eqref{construction1} in \S\ref{section:fundcomp} with the maps
\[
\xymatrix{
& &Fl(1,\br,n-1;n) \ar[ld]_{\pi_\br} \ar[rd]^{\pi_{(n-1)1}} &\\
&Fl(\br;n) & & (\PP^{n-1})^\vee \times \PP^{n-1}.
}
\]

\begin{defn}\label{defn:KTdefn} \cite[Definition 8.23]{CDMS20}
Let $\MM$ be a flag matroid of rank $\br = (r_1, \ldots, r_k)$ on $[n]$.  Then the \textbf{flag-geometric Tutte polynomial} of $\MM$, denoted $\KT_\MM(x,y) \in \ZZ[x,y]$, is the (unique) polynomial of bi-degree at most $(n-1,n-1)$ such that
\begin{equation}\label{eqn:KTdefn}
\KT_\MM(\alpha, \beta)  =  (\pi_{(n-1)1})_* \pi_r^* \Big( y (\MM) \cdot [\OO(\mathbf 1)]\Big).
\end{equation}
\end{defn}

While the construction \eqref{construction1} leading to $\KT_\MM$ may be more geometrically natural than \eqref{construction2}, the combinatorial properties of $\KT_\MM$ seem more mysterious than those of $LV\mathcal T_\MM$.  For example, in contrast to $LV\mathcal T_\MM$, the polynomial $\KT_\MM$ does not readily reduce to the Tutte polynomial of $M$ when $\MM$ is one of the two canonical matroid quotients of a matroid $M$ (i.e.\ $M\twoheadleftarrow M$ and $U_{0,n}\twoheadleftarrow M$).

\medskip
We illuminate some combinatorial structures of $\KT_\MM$ as follows.
\begin{itemize}
\item There is no known (corank-nullity) combinatorial formula for $\KT_\MM$ that is similar to \eqref{eqn:LVT} for $LV\mathcal T_\MM$.  Our result in \S\ref{subsection:uv1}, which in particular computes $\KT_\MM(2,2)$, can be considered as a first step in this direction.
\item No deletion-contraction relation is known to hold for $\KT_\MM$; one may construe this to be a consequence of the fact that the base polytope of a flag matroid generally has lattice points that are not vertices.  In \S\ref{subsection:delcont} we formulate and prove a deletion-contraction-like relation for elementary matroid quotients.
\end{itemize}

\subsection{First properties of $\KT_\MM$}\label{subsection:firstKT}

Again, by \Cref{prop:fundcomp}, we have that
\[
\KT_\MM(u+1, v+1) = \sum_{p,q} \chi \Big( y(\MM) [\OO(\mathbf 1)][\bigwedge^p \mathcal S_k][\bigwedge^q \mathcal Q_1^\vee]   \Big)u^pv^q,
\]
which leads us to the following $T$-equivariant version of $\KT_\MM$.

\begin{defn}\label{defn:FSTutte}
The $T$-equivariant flag-geometric Tutte polynomial of a flag matroid $\MM$ is
\[
\KT^T_\MM(u+1, v+1) := \sum_{p,q} \chi^T \Big( y(\MM)^T [\OO(\mathbf 1)]^T[\bigwedge^p \mathcal S_k]^T[\bigwedge^q \mathcal Q_1^\vee]^T   \Big)u^pv^q.
\]
\end{defn}

\Cref{thm:localization}.(3) again yields $\KT_\MM^T$ as a sum of rational functions as follows via a similar computation as one in the proof of \Cref{thm:LVT}.

\begin{lem}\label{lem:KTT}
For a flag matroid $\MM = (M_1, \ldots, M_k)$ on a ground set $[n]$, we have
\begin{equation}\label{eqn:KTT}
\KT^T_{\MM}(u+1,v+1)=\sum_{\BB \in \MM}{\Hilb(\Cone_{\BB}(\MM))\sum_{\pp \subseteq B_k}\sum_{\qq\subseteq [n]\setminus B_1}{\bt^{\be_{B_1}+\cdots + \be_{B_{k-1}}+ \be_\pp+\be_\qq}u^{r_k-|\pp|}v^{|\qq|}}}.
\end{equation}
\end{lem}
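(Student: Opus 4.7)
The plan is to evaluate $\KT^T_\MM(u+1,v+1)$ using the equivariant localization formula of \Cref{thm:localization}.(3) applied to the structure map $\chi^T$ of $Fl(\br;n)$, following the same template as the proof of \Cref{thm:equivLVT}. The $T$-fixed points of $Fl(\br;n)$ are indexed by flags of subsets $\SS = (S_1 \subseteq \cdots \subseteq S_k)$ with $|S_i|=r_i$, and at the fixed point $x_\SS$ the characters of the tangent space are $\{\be_i - \be_j \mid (i,j) \in Ex(\SS)\}$. Since $y(\MM)^T(x_\SS) = 0$ unless $\SS$ is a basis of $\MM$, the localization sum collapses to a sum over $\BB \in \mathcal B(\MM)$.

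The key cancellation is that, by definition of $y(\MM)^T$, the factor $\prod_{(i,j)\in Ex(\BB)}(1 - t_i^{-1}t_j)$ in $y(\MM)^T(x_\BB)$ exactly cancels the denominator $\prod_{(i,j) \in Ex(\BB)}(1 - \bt^{-(\be_i-\be_j)})$ coming from the Lefschetz trace formula. This leaves $\Hilb_\BB(\MM) = \Hilb(\Cone_\BB(\MM))$ as the contribution of $y(\MM)^T$ at $x_\BB$, and reduces the problem to computing the restrictions of the remaining equivariant classes at $x_\BB$. These restrictions are standard:
\[
[\OO(\mathbf 1)]^T(x_\BB) = \bt^{\be_{B_1}+\cdots+\be_{B_k}}, \qquad [\textstyle\bigwedge^p \mathcal S_k]^T(x_\BB) = \sum_{\pp \subseteq B_k,\, |\pp|=p} \bt^{-\be_\pp},\qquad [\textstyle\bigwedge^q \mathcal Q_1^\vee]^T(x_\BB) = \sum_{\qq \subseteq [n]\setminus B_1,\, |\qq|=q}\bt^{\be_\qq},
\]
as recorded in \S\ref{subsection:GKZ}.

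Multiplying these together, summing against $u^p v^q$, and collecting gives
\[
\KT^T_\MM(u+1,v+1) = \sum_{\BB \in \mathcal B(\MM)} \Hilb(\Cone_\BB(\MM)) \cdot \bt^{\be_{B_1}+\cdots+\be_{B_k}} \Big(\sum_{\pp \subseteq B_k} \bt^{-\be_\pp} u^{|\pp|}\Big)\Big(\sum_{\qq\subseteq [n]\setminus B_1} \bt^{\be_\qq} v^{|\qq|}\Big).
\]
The final step is the re-indexing $\pp \mapsto B_k \setminus \pp$ in the first inner sum, which transforms $\bt^{\be_{B_k}}\sum_{\pp \subseteq B_k} \bt^{-\be_\pp} u^{|\pp|}$ into $\sum_{\pp \subseteq B_k} \bt^{\be_\pp} u^{r_k - |\pp|}$; distributing this over the remaining factor $\bt^{\be_{B_1}+\cdots+\be_{B_{k-1}}}$ and the $\qq$-sum yields \eqref{eqn:KTT} exactly.

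No step is genuinely hard; the only point requiring care is bookkeeping of the monomial shifts, particularly the absorption of the $\bt^{\be_{B_k}}$ coming from $[\OO(\mathbf 1)]^T$ into $[\bigwedge^p \mathcal S_k]^T$ via complementation in $B_k$, which converts the exponent $|\pp|$ of $u$ into $r_k - |\pp|$ as required in the statement.
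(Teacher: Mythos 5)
Your proof is correct and is exactly the argument the paper intends: the paper derives \eqref{eqn:KTT} from \Cref{thm:localization}.(3) "via a similar computation as one in the proof of \Cref{thm:LVT}," which is precisely the localization-plus-cancellation-plus-complementation computation you carry out (cf.\ the chain of equalities in \eqref{eq:LVCones}, where the same re-indexing $\pp\mapsto B\setminus\pp$ absorbs the $\bt^{\be_{B_k}}$ from $[\OO(\mathbf 1)]^T$ and converts $u^{|\pp|}$ into $u^{r_k-|\pp|}$). All restriction formulas you quote agree with those recorded in \S\ref{subsection:GKZ}, so there is nothing to add.
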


Many of our results on $\KT_\MM$ will be obtained by manipulation with the equation \eqref{eqn:KTT}.  We start with the following example.

\begin{eg}\label{eg:canonicalbad}
For any matroid $M$ on $[n]$, we have $\KT_{U_{0,n},M}(x,y) = y^n T_M(x,1)$\footnote{The diagram \eqref{construction1} makes sense only when $r_1\geq 1$, so $\KT_{U_{0,n},M}$ cannot be defined as a push-pull of a $K$-class.  However, we define $\KT_{U_{0,n},M}$ by specializing $\KT_{U_{0,n},M}^T$ at $t_i = 1$.}.  To verify this, we compute
\begin{equation}\label{eqn:canonicalbad}
\begin{split}
\KT_{U_{0,n},M}^T(u+1,v+1) &= \sum_{B\in \mathcal B(M)} \Hilb(\Cone_B(M)) \sum_{\pp\subseteq B}\sum_{\qq\subseteq [n]} \bt^{\be_\pp+\be_\qq}u^{r-|\pp|}v^{|\qq|}\\
&=  \Big(\prod_{i=1}^n (1+t_iv)\Big) \cdot \sum_{B\in \mathcal B(M)} \Hilb(\Cone_B(M)) \sum_{\pp\subseteq B} \bt^{\be_\pp}u^{r-|\pp|}\\
&= \Big(\prod_{i=1}^n (1+t_iv)\Big) \cdot \KT_M^T(u+1,1).
\end{split}
\end{equation}
Setting $t_i = 1$, $u = x-1$, and $v = y-1$ yields the desired claim.  This example shows that we cannot recover $T_M$ from $\KT_{U_{0,n},M}$ although $U_{0,n}\twoheadleftarrow M$ is a canonical matroid quotient of $M$.
\end{eg}

\begin{prop}\label{prop:firstproperties} Let $\MM = (M_1, \ldots, M_k)$ be a flag matroid on $[n]$.  The following properties hold for the flag-geometric Tutte polynomial $\KT^T_\MM$:
\begin{enumerate}
\item (Direct sum) If $\MM$ is a direct sum $\MM' \oplus \MM''$ of two flag matroids on ground sets $A,B$ with $A\sqcup B = [n]$, then $\KT_\MM^T(x,y) = \KT_{\MM'}^{T'}(x,y) \cdot \KT_{\MM''}^{T''}(x,y)$ (where $T' = (\CC^*)^A,$ $T'' = (\CC^*)^B$).
\item (Loops \& coloops) Let $\ell$ be the number of loops in $M_1$, and $c$ the number of coloops in $M_k$.  Then $x^c y^\ell$ divides $\KT_\MM(x,y)$.
\item (Duality) If $\MM^\vee$ is the dual flag matroid of $\MM$, whose constituents are matroid duals of the original, then $\KT_\MM(y,x) = \KT_{\MM^\vee}(x,y)$.
\item (Base polytope) $\KT_\MM^T(1,1) = \Hilb(Q(\MM))$.
\item (Valuativeness) The map $\MM\mapsto \KT_\MM$ is valuative.
\end{enumerate}
\end{prop}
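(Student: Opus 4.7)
The plan is to derive each of the five properties from the equivariant formula \eqref{eqn:KTT} of \Cref{lem:KTT}, by manipulating the summation in the variables $\bt, u, v$ and specializing $\bt = 1$ at the end. Properties (1), (4), and (5) follow almost immediately, (2) is a short factoring argument, and the main work is in (3).

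For (1), I would observe that a direct-sum decomposition $\MM = \MM' \oplus \MM''$ over a partition $A \sqcup B = [n]$ induces basis decompositions $\BB = \BB' \sqcup \BB''$, factorizes each tangent cone as $\Cone_\BB(\MM) = \Cone_{\BB'}(\MM') \times \Cone_{\BB''}(\MM'')$ in disjoint coordinates, and splits the inner sums over $\pp$ and $\qq$. Since Hilbert series in disjoint variables multiply and $u^{r_k-|\pp|} = u^{r'_k-|\pp'|}\, u^{r''_k-|\pp''|}$, the triple sum in \eqref{eqn:KTT} factors into a product. For (4), setting $u = v = 0$ in \eqref{eqn:KTT} kills every term except those with $\pp = B_k$ and $\qq = \emptyset$, leaving $\sum_\BB \bt^{\be_\BB} \Hilb(\Cone_\BB(\MM))$; by Brion's formula (\Cref{thm:generalbrion}) this equals $\Hilb(Q(\MM))$. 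For (5), the map $y(\MM)^T \mapsto \KT^T_\MM$ is $\QQ$-linear, being built from pullback, multiplication by fixed $K$-classes, pushforward $\chi^T$, and extraction of coefficients of $u^p v^q$; the valuativeness of $\MM \mapsto y(\MM)^T$ (\Cref{rem:valuative}) thus transfers to $\KT^T_\MM$ and hence to its specialization $\KT_\MM$.

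For (2), I would note that if $\ell$ is a loop of $M_1$ then $\ell \in [n]\setminus B_1$ for every basis of $\MM$, so
\[
\sum_{\qq \subseteq [n]\setminus B_1} \bt^{\be_\qq} v^{|\qq|} \;=\; (1 + t_\ell v) \sum_{\qq \subseteq ([n]\setminus B_1)\setminus \ell} \bt^{\be_\qq} v^{|\qq|}
\]
uniformly in $\BB$. Iterating over all loops of $M_1$ pulls $\prod_\ell(1+t_\ell v)$ out of every summand in \eqref{eqn:KTT}; at $\bt = 1$ this gives divisibility by $(1+v)^\ell = y^\ell$. Symmetrically, a coloop $c$ of $M_k$ satisfies $c \in B_k$ for every basis, so $(u+t_c)$ factors uniformly out of $\sum_{\pp \subseteq B_k} \bt^{\be_\pp} u^{r_k-|\pp|}$, yielding $x^c \mid \KT_\MM(x,y)$ after $\bt = 1$.

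The hard part will be (3). I would compute $\KT^T_{\MM^\vee}$ via \eqref{eqn:KTT} using that bases of $\MM^\vee$ correspond to bases of $\MM$ via $\BB^\vee = ([n]\setminus B_k, \ldots, [n]\setminus B_1)$, and that $Q(\MM^\vee) = k\,\be_{[n]} - Q(\MM)$ gives $\Cone_{\BB^\vee}(\MM^\vee) = -\Cone_\BB(\MM)$, hence $\Hilb(\Cone_{\BB^\vee}(\MM^\vee))(\bt) = \Hilb(\Cone_\BB(\MM))(\bt^{-1})$. Reparametrizing the inner sums via the complementary subsets $\widetilde\pp := ([n]\setminus B_1)\setminus \pp^\vee \subseteq [n]\setminus B_1$ and $\widetilde\qq := B_k \setminus \qq^\vee \subseteq B_k$, and collecting the resulting $\bt$-exponent from $\be_{B^\vee_1} + \cdots + \be_{B^\vee_{k-1}}$, I expect to arrive at
\[
\KT^T_{\MM^\vee}(u+1,v+1)\big|_{\bt \to \bt^{-1}} \;=\; \bt^{-k\be_{[n]}} \cdot \KT^T_\MM(v+1, u+1),
\]
which at $\bt = 1$ gives $\KT_{\MM^\vee}(x,y) = \KT_\MM(y,x)$. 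The delicate part is the bookkeeping: keeping the $\bt$-inversion coming from the Hilbert series aligned with the complement substitutions in the exponents, and correctly identifying the $(k-1)\be_{[n]}$ shift from the reindexing $B^\vee_j = [n] \setminus B_{k+1-j}$.
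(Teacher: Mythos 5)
Your proposal is correct and follows essentially the same route as the paper, which likewise derives all five properties by manipulating the localization formula \eqref{eqn:KTT} (in the manner of \Cref{eg:canonicalbad}), invokes Brion's formula for (4) and \Cref{rem:valuative} for (5), and proves (3) through a $T$-equivariant identity obtained by substituting $t_i \mapsto t_i^{-1}$. Your prefactor $\bt^{k\be_{[n]}}$ in the equivariant duality identity is the correct one (the paper writes $\bt^{\be_{[n]}}$, which appears to be a typo when $k>1$); in either case the non-equivariant statement follows upon setting $\bt = 1$.
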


\begin{proof}
The first two statements follow from manipulating with the identity \eqref{eqn:KTT} in a similar way as the computation \eqref{eqn:canonicalbad} in \Cref{eg:canonicalbad}.  For the third statement, we claim that the $T$-equivariant version of the statement is $t^{\be_{[n]}}\KT_\MM^{T^{-1}}(y,x) = \KT_{\MM^\vee}(x,y)$, where the $T^{-1}$ superscript means that we have replaced $t_i$ by $t_i^{-1}$.  Verifying this identity is then another easy manipulation with \eqref{eqn:KTT}.  The fourth statement follows from Brion's formula (\Cref{thm:generalbrion}).  The last statement follows from \Cref{rem:valuative}.
\end{proof}

We can use Theorem \ref{thm:sumOfMatroidCones} to compute some of the terms in (\ref{eqn:KTT}):
\begin{thm}
Let $\MM=(M_1,M_2)$ be a 2-step flag matroid and let $\bt^{\bk}u^{r_2-i}v^j$ be a monomial occurring in \eqref{eqn:KTT}. Then $\sum_{\ell=1}^n{k_\ell}=r_1+i+j$. Let $c$ denote the number of entries in $\bk$ that are equal to $1$. If $c \leq |r_1+j-i|$, the coefficient of $\bt^{\bk}u^{r_2-i}v^j$ is equal to 
\begin{enumerate}
	\item \label{a} $1$, if $S_2$ is spanning for $M_1$, $S_1$ is independent in $M_2$, and $c = |r_1+j-i|$,
	\item \label{b} $0$, otherwise,
\end{enumerate}
where $S_1$ and $S_2$ are defined by $S_1 \subseteq S_2$ and $\bk=\be_{S_1}+\be_{S_2}$.
\end{thm}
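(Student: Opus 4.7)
My plan is to apply \Cref{thm:sumOfMatroidCones} to the Laurent polynomial
\[
\varphi_{i,j} \;:=\; \sum_{\BB\in\mathcal{B}(\MM)} \Hilb(\Cone_\BB(\MM)) \sum_{\substack{\pp\subseteq B_2\\|\pp|=i}}\sum_{\substack{\qq\subseteq[n]\setminus B_1\\|\qq|=j}} \bt^{\be_{B_1}+\be_\pp+\be_\qq},
\]
which by \Cref{lem:KTT} is the coefficient of $u^{r_2-i}v^j$ in $\KT^T_\MM(u+1,v+1)$. The first claim $\sum_{\ell=1}^n k_\ell=r_1+i+j$ is immediate, since each summand $\bt^{\be_{B_1}+\be_\pp+\be_\qq}$ has total $\bt$-degree $|B_1|+|\pp|+|\qq|=r_1+i+j$.

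For the main assertion, I would fix a basis $\BB=(B_1,B_2)$ with $|S_a\cap B_b|=\rk_b(S_a)$ for $a,b\in\{1,2\}$; such $\BB$ exists by \Cref{prop:face}. By \Cref{thm:sumOfMatroidCones}, the coefficient of $\bt^\bk$ in $\varphi_{i,j}$ equals the number of pairs $(\pp,\qq)$ with $\BBi=\BB$ satisfying $\be_{B_1}+\be_\pp+\be_\qq=\bk$. I would analyze this equation coordinate-by-coordinate: a coordinate $\bk_\ell=0$ forces $\ell\notin B_1$, hence $B_1\subseteq S_2$, so that $|S_2\cap B_1|=r_1$ yields $\rk_1(S_2)=r_1$, the spanning condition on $S_2$. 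A coordinate $\bk_\ell=2$ with $\ell\notin B_1$ forces $\ell\in\pp\subseteq B_2$, hence $S_1\subseteq B_2$, so that $|S_1\cap B_2|=|S_1|$ yields $\rk_2(S_1)=|S_1|$, the independence condition on $S_1$. Once both hold, the membership of every coordinate outside the \emph{free set} $T:=(S_2\setminus S_1)\cap(B_2\setminus B_1)$ is uniquely determined, while each element of $T$ independently lies in exactly one of $\pp$ or $\qq$. Hence the count of admissible $(\pp,\qq)$ is $\binom{|T|}{i-|S_1|}$, where $|T|=\rk_2(S_2)-|S_1|-r_1+\rk_1(S_1)$.

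To finish, I would show that the hypothesis $c\leq|r_1+j-i|$ forces this binomial into $\{0,1\}$, with value $1$ iff (a), (b), and $c=|r_1+j-i|$ all hold. Using $|S_1|+|S_2|=r_1+i+j$ and $c=|S_2|-|S_1|$, a direct substitution gives $r_1+j-i = c-2(i-|S_1|)$, so the hypothesis reads $c\leq|c-2(i-|S_1|)|$. This forces either $i=|S_1|$ (so $\binom{|T|}{0}=1$ and $r_1+j-i=c$) or $i-|S_1|\geq c$; in the latter regime, the rank bounds $\rk_2(S_2)\leq|S_2|$ and $\rk_1(S_1)\leq r_1$ yield $|T|\leq c\leq i-|S_1|$, so $\binom{|T|}{i-|S_1|}$ vanishes unless $|T|=c=i-|S_1|$, in which case $r_1+j-i=-c$ and $c=|r_1+j-i|$ still holds. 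Collecting these subcases yields the stated dichotomy.

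The principal obstacle will be the coordinate-by-coordinate bookkeeping that isolates conditions (a) and (b) and identifies the free locus $T$: one must track, for each $\ell\in[n]$, how the equations forced by $\bk_\ell\in\{0,1,2\}$ interact with the membership of $\ell$ in each of $B_1,B_2,\pp,\qq,S_1,S_2$. Once that is laid out, the rank bounds that give $|T|\leq c$ and the manipulation of the hypothesis are routine.
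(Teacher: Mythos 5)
Your overall strategy coincides with the paper's: work in the fixed $(i,j)$-slice, choose a basis $\BB$ adapted to the flag $S_1\subseteq S_2$ via \Cref{prop:face}, and count representations $\bk=\be_{B_1}+\be_\pp+\be_\qq$ using \Cref{thm:sumOfMatroidCones}. Your identification of the free set $T$ and the count $\binom{|T|}{i-|S_1|}$ is correct and in fact more explicit than the published argument. There are, however, two gaps. The smaller one: \Cref{thm:sumOfMatroidCones} computes the coefficient of $\bt^v$ only when $v$ is a \emph{vertex} of $\widetilde P$, whereas you apply it to every $\bk$ with $c\le|r_1+j-i|$. The case $c<|r_1+j-i|$ should instead be dispatched by \Cref{cor:vertexcase}.\ref{vertexcase:2}: each $w_\lambda=\be_{B_1}+\be_\pp+\be_\qq$ has doubly-covered set $(B_1\cap\pp)\sqcup(\pp\cap\qq)$ of size at most $\min(i,r_1+j)$, hence at least $|r_1+j-i|$ coordinates equal to $1$, so such $\bk$ lies outside $\widetilde P\supseteq\operatorname{Newt}(\varphi_{i,j})$; and when $c=|r_1+j-i|$ one still has to check that $\bk$ is genuinely a vertex of $\widetilde P$.

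The more serious gap is that your closing ``dichotomy'' does not give the stated equivalence in the regime $i>r_1+j$, where $m:=i-|S_1|=c>0$. Your own formula yields the coefficient $\binom{|T|}{c}$, which equals $1$ only if $|T|=c$, i.e.\ only if \emph{in addition} $S_1$ is spanning in $M_1$ and $S_2$ is independent in $M_2$; conditions (a) alone do not imply this. Concretely, take $M_1=U_{1,4}$ and $M_2$ the rank-$2$ matroid on $[4]$ in which $3$ and $4$ are parallel, with $i=2$, $j=0$, $\bk=(0,0,1,2)$: then $S_2=\{3,4\}$ spans $M_1$, $S_1=\{4\}$ is independent in $M_2$, and $c=1=|r_1+j-i|$, yet the coefficient is $0$ (indeed $(0,0,1,2)\notin Q(\MM)$). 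The statement survives only because of the hypothesis that the monomial \emph{occurs} in \eqref{eqn:KTT}: if $\bk=\be_{B_1'}+\be_{\pp'}+\be_{\qq'}$ for some summand and $|S_1|=r_1+j$, then the doubly-covered set has maximal size $|B_1'|+|\qq'|$, which forces $B_1'\subseteq\pp'$ and $\qq'\subseteq\pp'$, whence $S_1=B_1'\sqcup\qq'\supseteq B_1'$ is spanning in $M_1$ and $S_2=\pp'\subseteq B_2'$ is independent in $M_2$, so that $|T|=c$ and the binomial is $1$. You never use the occurrence hypothesis beyond the degree count, so this direction of part (1) is missing from your argument; it needs to be added for the proof to close.
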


\begin{proof}
The equality $\sum_{\ell=1}^{n}{k_\ell}=r_1+i+j$ follows immediately from \eqref{eqn:KTT}. 
	The coefficient of $u^{r_2-i}v^j$ is equal to
	\begin{equation}\label{eqn:KTTuv}
	\sum_{\tiny \begin{matrix} \BB=(B_1, B_2),\\ \BB \in \mathcal B(\MM)\end{matrix}}{\Hilb(\Cone_\BB(\MM))\sum_{\tiny \begin{matrix} \pp \subseteq B_2, \\ |\pp|=i\end{matrix}}\sum_{\tiny \begin{matrix} \qq\subseteq J_1,\\  |\qq|=j\end{matrix}}{\bt^{\be_{B_1}+\be_\pp+\be_\qq}}},
	\end{equation}
where we have denoted $J_1 := [n]\setminus B_1$.  The vertices of $\widetilde{P}$ have $|r_1+j-i|$ entries equal to $1$. This proves that the coefficient is $0$ if $c < |r_1+j-i|$. So from now on we assume $c = |r_1+j-i|$.
	
	Next, we apply \Cref{thm:sumOfMatroidCones}. 
	Writing $\bk=\be_{S_1}+\be_{S_2}$, and noting that $|S_1|=\min(i,r_1+j)$ and $|S_2|=\max(i,r_1+j)$, we find a basis $\BB = (B_1,B_2)$ of $\MM$ for which $r_j(S_i)=|S_i \cap B_j|$. We now need to compute the number of ways $\bk$ can be written as a sum $\be_{B_1}+\be_\pp+\be_\qq$.
	If $S_2$ is not spanning for $M_1$, or if $S_1$ is not independent in $M_2$, there are no ways to do this, and the coefficient is $0$.
	Otherwise, if $i \leq r_1+j$, we need to put $\pp=S_1$ and $\qq=S_2 \setminus S_1$. If $i \geq r_1+j$, we need to put $\qq=S_1 \cap J_1$ and $\pp=S_1 \cup J_1$. In both cases, there is just one way, so the coefficient is $1$.
\end{proof}

\subsection{Towards a corank-nullity formula}\label{subsection:uv1}

For a matroid $M$ on $[n]$, the corank-nullity formula for the Tutte polynomial
$T_M(x,y) = \sum_{S\subseteq [n]} (x-1)^{r - r(S)}(y-1)^{|S| - r(S)}$
expresses $T_M$ as a sum over all subsets of $[n]$.  In particular, we have $T_M(2,2) = 2^n$; in fact, $\KT^T_M(2,2) = \prod_{i=1}^n (1+t_i)$.  As a first step towards a similar formula for $\KT_\MM$, we show the following for a two-step flag matroid.

\begin{thm}\label{thm:KT22}
Let $\MM$ be a two-step flag matroid $\MM = (M_1,M_2)$ of rank $(r_1, r_2)$, and let $p\mathcal B(\MM)$ be the set of pseudo-bases of $\MM$, i.e.\ subsets $S\subseteq [n]$ such that $S$ is spanning in $M_1$ and independent in $M_2$.  With $q$ as a formal variable, we have
\[
\KT_{\MM}^T(1+q^{-1},1+q) = q^{-r_2} \Big( \prod_{i =1}^{n}{(1+t_i q)} \Big) \Big(\sum_{S \in p\mathcal B(\MM)} \bt^{\be_S} q^{|S|}\Big),
\]
and in particular, we have
\[
\begin{split}
& \KT_\MM(1+q^{-1},1+q) = q^{-r_2} \cdot (1+q)^n \cdot \Big(\sum_{S \in p\mathcal B(\MM)} q^{|S|}\Big),\\
& \KT_\MM^T(2,2) = \Big( \prod_{i=1}^n (1+t_i) \Big) \Big(\sum_{S \in p\mathcal B(\MM)} \bt^{\be_S} \Big), \textnormal{ and}\\
& \KT_\MM(2,2) = 2^n|p\mathcal B(\MM)|.
\end{split}
\]
\end{thm}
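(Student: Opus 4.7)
I will prove the $T$-equivariant identity — the first display in the theorem — since the other assertions follow by specializing $q=1$ and then $t_i=1$. Starting from the $k=2$ case of \Cref{lem:KTT}, substitute $u=q^{-1}$ and $v=q$; the inner sums over $\pp\subseteq B_2$ and $\qq\subseteq[n]\setminus B_1$ become $q^{-r_2}\prod_{i\in B_2}(1+t_iq)$ and $\prod_{i\notin B_1}(1+t_iq)$ respectively. Using $B_1\subseteq B_2$ these factor as $q^{-r_2}\prod_{i=1}^n(1+t_iq)\cdot\prod_{i\in B_2\setminus B_1}(1+t_iq)$, reducing the claim to
\[
\sum_{\BB=(B_1,B_2)\in\mathcal{B}(\MM)}\bt^{\be_{B_1}}\Hilb(\Cone_\BB(\MM))\prod_{i\in B_2\setminus B_1}(1+t_iq)=\sum_{S\in p\mathcal{B}(\MM)}\bt^{\be_S}q^{|S|}.
\]
Comparing coefficients of $q^j$, the remaining task is to show
\[
\varphi_j:=\sum_\BB\sum_{\substack{\mm\subseteq B_2\setminus B_1\\ |\mm|=j}}\bt^{\be_{B_1}+\be_{\mm}}\Hilb(\Cone_\BB(\MM))=\sum_{\substack{S\in p\mathcal{B}(\MM)\\ |S|=r_1+j}}\bt^{\be_S}.
\]
A quick localization check identifies $\varphi_j=\chi^T\bigl(y(\MM)^T\cdot[\OO(1,0)]^T\cdot[\bigwedge^j(\mathcal{S}_2/\mathcal{S}_1)^\vee]^T\bigr)\in K_T^0(pt)$, so $\varphi_j$ is a Laurent polynomial in $\bt$, the hypothesis required by \Cref{thm:sumOfMatroidCones}.

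\textbf{Cone-flipping.} I will verify the identity on $\varphi_j$ by computing each of its coefficients via \Cref{thm:sumOfMatroidCones}. Since $B_1\cap\mm=\emptyset$, every vertex position $w_\lambda=\be_{B_1}+\be_{\mm}$ is a $0$–$1$ vector of weight $r_1+j$, so the symmetrized polytope $\widetilde P$ of \Cref{thm:sumOfMatroidCones} is the hypersimplex $\Delta_{r_1+j,n}$. Crucially $\Delta_{r_1+j,n}\subseteq[0,1]^n$, so its lattice points are exactly its vertices; together with the Newton-polytope containment of \Cref{cor:vertexcase}, this means every coefficient of $\varphi_j$ is determined by \Cref{thm:sumOfMatroidCones} applied at some vertex of $\widetilde P$. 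For a vertex $v=\be_T$, use \Cref{prop:face} to pick a basis $\BB=(B_1,B_2)$ of $\MM$ satisfying $|B_l\cap T|=\rk_{M_l}(T)$ for $l=1,2$. Then $[\bt^{\be_T}]\varphi_j$ equals the number of pairs $(\BB_\lambda,\mm_\lambda)$ with $\BB_\lambda=\BB$ and $\be_{B_1}+\be_{\mm_\lambda}=\be_T$; the latter forces $B_1\subseteq T\subseteq B_2$ with $\mm_\lambda=T\setminus B_1$. Given the rank conditions on $\BB$, these two inclusions are equivalent to $\rk_{M_1}(T)=r_1$ and $\rk_{M_2}(T)=|T|$, i.e., $T\in p\mathcal{B}(\MM)$. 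Hence $[\bt^{\be_T}]\varphi_j=\One(T\in p\mathcal{B}(\MM))$, as desired.

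\textbf{Main obstacle.} The key observation is that $\widetilde P$ is a hypersimplex, whose lattice points coincide with its vertices. This fortunate feature converts \Cref{thm:sumOfMatroidCones} — a priori only a vertex statement — into a complete description of $\varphi_j$, bypassing the need to separately analyze non-vertex lattice points. If the $w_\lambda$ could have coordinates larger than $1$, Theorem \Cref{thm:sumOfMatroidCones} alone would no longer pin down every coefficient.
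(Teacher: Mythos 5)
Your approach is essentially the same as the paper's: substitute $u=q^{-1}$, $v=q$ into \Cref{lem:KTT}, factor out $q^{-r_2}\prod_i(1+t_iq)$, and evaluate the remaining sum coefficient-by-coefficient via \Cref{thm:sumOfMatroidCones}, using exactly the observation you highlight — that the symmetrized polytope is a hypersimplex, so every lattice point in play is a vertex. Your computation of $\varphi_j$ is correct and matches the paper's treatment of its $\varphi_r$ (with $r=r_1+j$), and your remark identifying $\varphi_j$ as an equivariant Euler characteristic, which verifies the Laurent-polynomial hypothesis of \Cref{thm:sumOfMatroidCones}, is a worthwhile addition that the paper leaves implicit.

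There is, however, a bookkeeping slip in your reduction, and it is worth flagging because the paper's statement shares it. Your displayed ``reduced claim'' has right-hand side $\sum_{S}\bt^{\be_S}q^{|S|}$, but the coefficient of $q^j$ on its left-hand side is $\varphi_j$, which you correctly show equals $\sum_{|S|=r_1+j}\bt^{\be_S}$; so what your argument actually establishes is
\[
\sum_{\BB}\bt^{\be_{B_1}}\Hilb(\Cone_\BB(\MM))\prod_{i\in B_2\setminus B_1}(1+t_iq)=\sum_{S\in p\mathcal{B}(\MM)}\bt^{\be_S}q^{|S|-r_1},
\]
and hence
\[
\KT_{\MM}^T(1+q^{-1},1+q) = q^{-r_1-r_2}\Big(\prod_{i=1}^n(1+t_iq)\Big)\Big(\sum_{S\in p\mathcal B(\MM)}\bt^{\be_S}q^{|S|}\Big),
\]
which differs from the stated theorem by a factor of $q^{r_1}$. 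The paper's own examples confirm that this corrected version is the true one: for $\MM=(U_{1,2},U_{1,2})$ one has $\KT_\MM(x,y)=xy+x+y$, so $\KT_\MM(1+q^{-1},1+q)=2q^{-1}(1+q)^2=q^{-2}(1+q)^2\cdot 2q$, whereas the stated formula would give $q^{-1}(1+q)^2\cdot 2q=2(1+q)^2$. So the discrepancy lies in the theorem statement itself (the exponent should be $|S|-r_1$, equivalently the prefactor should be $q^{-r_1-r_2}$); the $q=1$ specializations, including $\KT_\MM(2,2)=2^n|p\mathcal B(\MM)|$, are unaffected. Apart from making this shift explicit rather than silently absorbing it in the ``comparing coefficients'' step, your proof is sound and identical in substance to the paper's.
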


\begin{proof}
Setting $u = q^{-1}$ and $v = q$ in \eqref{eqn:KTT} of \Cref{lem:KTT} gives us
\begin{align*}
\KT^T_{\MM}(1+q^{-1},1+q) &= \sum_{\tiny \begin{matrix} \BB=(B_1, B_2),\\\BB \in \mathcal B(\MM) \end{matrix} }{\Hilb(\Cone_\BB(\MM))\sum_{\pp \subseteq B_2}\sum_{\qq \subseteq [n]\setminus B_1}{\bt^{\be_{B_1}+\be_\pp+\be_\qq}q^{|\pp|+|\qq|-r_2}}}\\
&= \sum_{\tiny \begin{matrix} \BB=(B_1, B_2),\\\BB \in \mathcal B(\MM) \end{matrix} }{\Hilb(\Cone_\BB(\MM))\sum_{R \subseteq E}\sum_{S\subseteq B_2 \setminus B_1}{\bt^{\be_{B_1}+\be_R+\be_S}q^{|R|+|S|-r_2}}}\\
&= q^{-r_2}\prod_{i \in E}{(1+t_iq)}\sum_{\tiny \begin{matrix} \BB=(B_1, B_2),\\\BB \in \mathcal B(\MM) \end{matrix} }{\Hilb(\Cone_B(\MM))\sum_{S\subseteq B_2 \setminus B_1}{\bt^{\be_{B_1}+\be_S}q^{|S|}}}.
\end{align*}
We now use Theorem \ref{thm:sumOfMatroidCones} to compute the sum 
\[
\varphi_r := \sum_{\tiny \begin{matrix} \BB=(B_1, B_2),\\\BB \in \mathcal B(\MM) \end{matrix} }{\Hilb(\Cone_\BB(\MM))\sum_{\tiny \begin{matrix} B_1 \subseteq \pp \subseteq B_2,\\ |\pp|= r \end{matrix}}\bt^{\be_\pp}}
\]
for a fixed $r_1 \leq r \leq r_2$.  First, we note that the polytope $\widetilde{P} = \operatorname{Conv}(\be_S \mid S\subseteq E,\ |S| = r)$, obtained as the convex hull of the $S_n$-orbit of $\{\be_\pp \mid B_1\subseteq \pp \subseteq B_2,\ |\pp| = r\}$, has no interior lattice points.

For $S \subseteq E$ with $|S| = r$, if $S$ is not a pseudo-basis of $M_1 \twoheadleftarrow M_2$, then there is no basis $\BB$ of $\MM$ such that $B_1 \subseteq S \subseteq B_2$, and hence the coefficient of $\bt^{\be_S}$ is $0$ in this case. Now, suppose $S$ is a pseudo-basis of $M_1 \twoheadleftarrow M_2$, which by definition implies that there exists basis $\BB=(B_1,B_2)$ of $\MM$ with $B_1 \subseteq S \subseteq B_2$. This basis $\BB$ is a vertex of the face $Q(\MM)^{\be_S}$ by \Cref{prop:face}, and thus by \Cref{thm:sumOfMatroidCones} the coefficient of $\bt^{\be_S}$ is equal to 1 in $\varphi_r$.
\end{proof}

We do not know of analogues of \Cref{thm:KT22} for flag matroids with more than two constituents.

\subsection{A deletion-contraction-like relation}\label{subsection:delcont}

In this section, we consider $\KT_\MM$ of an elementary quotient $\MM = (M_1, M_2)$.  By definition we have $r(M_2) - r(M_1) = 1$, and in this case there is a unique matroid $M$ on a ground set $[\widetilde n] := \{0\}\sqcup [n]$ such that $M_1 = M/0$ and $M_2 = M\setminus 0$ \cite[\S7.3]{Oxl11}.  Our main theorem of this subsection is the following deletion-contraction-like relation.

\begin{thm}\label{thm:delcont}
Let $M$ be a matroid of rank $r$ on $[\widetilde n] := \{0\} \sqcup [n]$ such that the element 0 is neither a loop nor a coloop in $M$.  Let $\widetilde T = \CC^* \times T = (\CC^*)^{n+1}$ be the torus with character ring $\ZZ[t_0^\pm, \ldots, t_n^\pm]$.  Then we have
\begin{equation}\label{eqn:delcontrelation}
\KT^{\widetilde T}_{M,M}(x,y) = t_0^2 \KT^T_{M/0,M/0}(x,y) + t_0 \KT^T_{M/0,M\setminus 0}(x,y)  + \KT^T_{M\setminus 0, M\setminus 0}(x,y).
\end{equation}
In particular, we have $\KT_{M,M}(x,y) =  \KT_{M/0,M/0}(x,y) + \KT_{M/0,M\setminus 0}(x,y)  + \KT_{M\setminus 0, M\setminus 0}(x,y).$
\end{thm}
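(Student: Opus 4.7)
The strategy is to prove the stronger torus-equivariant identity \eqref{eqn:delcontrelation}, from which setting $t_i = 1$ recovers the non-equivariant statement.  By \Cref{lem:KTT}, the LHS $\KT^{\widetilde T}_{M,M}(u+1,v+1)$ expands as a sum over bases $B \in \mathcal B(M)$ of Hilbert series of (translated) tangent cones $\Cone_B(M) \subseteq \RR^{\widetilde n}$, and analogously each RHS term expands over bases of the corresponding flag matroid on $[n]$.  The driving observation is that $\Cone_B(M)$ is $\be_0$-pointed when $0 \notin B$ and $-\be_0$-pointed when $0 \in B$, since its rays $\be_j - \be_i$ arising from basis exchanges have nonzero $\be_0$-component only when $\{i,j\}$ meets $\{0\}$, and in that case the sign is dictated by whether $0 \in B$.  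Combined with $\pp \subseteq B$ and $\qq \subseteq [\widetilde n] \setminus B$, this forces every $t_0$-degree appearing in $\KT^{\widetilde T}_{M,M}$ to lie in $\{0,1,2\}$.  The plan is to slice the polynomial by $t_0$-degree using \Cref{thm:sumOfCones} and identify each slice with the corresponding RHS term.

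For the $t_0^0$ slice, apply \Cref{thm:sumOfCones} with $\bz = \be_0$ and $b = 0$; the hypothesis is vacuous and only $\be_0$-pointed cones ($0 \notin B$) survive.  The face identity $\Cone_B(M) \cap H_{\be_0 = 0} = \Cone_B(M \setminus 0)$—since the face generated by rays of $\be_0$-component zero corresponds to basis exchanges of the deletion—then matches the slice term-by-term with $\KT^T_{M \setminus 0, M \setminus 0}(u+1,v+1)$.  Symmetrically, for the $t_0^2$ slice, apply \Cref{thm:sumOfCones} with $\bz = -\be_0$ and $b = -2$: only the $-\be_0$-pointed cones ($0 \in B$) survive, and writing $B = B' \cup \{0\}$, the identity $\Cone_B(M) \cap H_{\be_0 = 0} = \Cone_{B'}(M/0)$ yields $t_0^2 \KT^T_{M/0,M/0}(u+1,v+1)$, with the required $0 \in \pp$ accounting for the extra $t_0$ factor.

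The main challenge is the $t_0^1$ slice.  Applying \Cref{thm:sumOfCones} with $\bz = \be_0$ and $b = 1$ is valid because every non-$\be_0$-pointed cone has vertex $\be_0$-value at least $1$, so only $\be_0$-pointed cones ($0 \notin B$) contribute.  The key geometric input is the Minkowski-sum decomposition
\[
\Cone_B(M) \;=\; \operatorname{Cone}(\be_0 - \be_i : i \in C_0(B)) + \Cone_B(M \setminus 0),
\]
where $C_0(B) := \{i \in B : (B \setminus i) \cup \{0\} \in \mathcal B(M)\}$ consists of the elements of the fundamental circuit of $0$ with respect to $B \cup \{0\}$ other than $0$.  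Two sub-cases arise based on $\qq$: when $0 \in \qq$, the vertex $\be_B + \be_\pp + \be_\qq$ lies on $H_{\be_0 = 1}$ and contributes $\bt^{\mathrm{vertex}} \Hilb(\Cone_B(M \setminus 0))$; when $0 \notin \qq$, the vertex lies on $H_{\be_0 = 0}$ and the slice is the polyhedron $\operatorname{Conv}(\be_0 - \be_i : i \in C_0(B)) + \Cone_B(M \setminus 0)$.

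The heart of the proof, and the main obstacle, is verifying that the sum of these two LHS contributions matches $t_0 \KT^T_{M/0,M \setminus 0}(u+1,v+1)$.  Expanding the RHS via \Cref{lem:KTT} shows its flag bases are exactly the pairs $(B \setminus j, B)$ for $j \in C_0(B)$, each with tangent cone $\Cone_{B \setminus j}(M/0) + \Cone_B(M \setminus 0)$.  I expect to establish the required Hilbert-series identity by applying Brion's formula (\Cref{thm:generalbrion}) to the simplex $\operatorname{Conv}(\be_0 - \be_i : i \in C_0(B))$: its vertices are naturally indexed by the same $j$ as the flag bases on the RHS, and the tangent cone at vertex $\be_0 - \be_j$, Minkowski-summed with $\Cone_B(M \setminus 0)$, should correspond to the flag-matroid cone $\Cone_{B \setminus j}(M/0) + \Cone_B(M \setminus 0)$ up to a shift.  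The difficulty is that Hilbert series of Minkowski sums of cones are not multiplicative, so this matching requires delicate rational-function manipulation; a small-example check ($M = U_{2,4}$ at $B = \{1,2\}$) confirms that both sides agree through nontrivial cancellation, reinforcing that the Brion-based identification is the right bridge between the two descriptions.
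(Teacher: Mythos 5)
Your overall strategy is the paper's: expand $\KT^{\widetilde T}_{M,M}$ via \Cref{lem:KTT}, note that $\Cone_B(Q(M,M))$ is $\be_0$-pointed exactly when $0\notin B$ (and $(-\be_0)$-pointed when $0\in B$), and extract the three $t_0$-slices with \Cref{thm:sumOfCones}. Your treatment of the $t_0^0$ and $t_0^2$ slices is correct and matches the paper, and your Minkowski decomposition $\Cone_B(M)=\Cone(\be_0-\be_i : i\in C_0(B))+\Cone_B(M\setminus 0)$ correctly identifies the middle slice polyhedron as $P:=\operatorname{Conv}(\be_0-\be_i : i\in C_0(B))+\Cone_B(M\setminus 0)$.

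The genuine gap is the step you flag yourself. Applying Brion to the simplex $\operatorname{Conv}(\be_0-\be_i)$ alone and then ``Minkowski-summing with $\Cone_B(M\setminus 0)$'' is not a legitimate operation: Minkowski sum is not linear on $\fP_n$, so a Brion decomposition of the simplex does not transport to a decomposition of $P$, and no amount of rational-function manipulation repairs this summand-by-summand. The fix (this is the paper's \Cref{lem:delcont}) is to apply \Cref{thm:generalbrion} to $P$ itself. First show $P=\be_0-\be_B+Q(M/0)+\Cone_B(Q(M\setminus 0))$; the containment $\subseteq$ is clear, and $\supseteq$ uses the quotient relation (every basis of $M/0$ lies in some basis of $M\setminus 0$) to absorb the vertices $\be_I$ of $Q(M/0)$ with $I\not\subseteq B$ into the cone. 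The vertices of $P$ are then exactly $\be_0-\be_B+\be_I$ for $I\in\mathcal B(M/0)$, $I\subseteq B$, with tangent cone $\Cone_I(M/0)+\Cone_B(M\setminus 0)=\Cone_{(I,B)}(Q(M/0,M\setminus 0))$ (the tangent cone of a Minkowski sum at a vertex is the Minkowski sum of tangent cones), and the first half of \Cref{thm:generalbrion} gives your ``$0\notin\qq$'' contribution. You also omit how the ``$0\in\qq$'' contribution, which naturally carries $\Hilb(\Cone_B(M\setminus 0))$, gets rewritten over flag bases: this needs the recession-cone half of \Cref{thm:generalbrion} applied to the same $P$, namely $\Hilb(\Cone_B(M\setminus 0))=\sum_{I}\Hilb(\Cone_{(I,B)}(Q(M/0,M\setminus 0)))$, after which the two sub-cases recombine via $(1+t_{B\setminus I}v)\sum_{\qq\subseteq[n]\setminus B}=\sum_{\qq\subseteq[n]\setminus I}$ to yield $t_0\KT^T_{M/0,M\setminus 0}$. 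With these two identities your argument closes exactly as in the paper.
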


We use $\{\be_0, \ldots, \be_n\}$ for the standard basis of $\RR^{n+1} = \RR\oplus \RR^n$.  For a polyhedron $P \subset \RR^n$, we will often abuse the notation and write $P$ also for $\{\mathbf 0 \}\times P \subset \RR \oplus \RR^n$.  We prepare for the proof of \Cref{thm:delcont} by an observation that motivated the theorem.  

\medskip
As the base polytope $Q(M)$ is a $(0,1)$-polytope (i.e.\ a lattice polytope contained in the Boolean cube $[0,1]^{n+1} \subset \RR^{n+1}$), every lattice point is a vertex.  Moreover, observe that the vertices of $Q(M)$ partition into two parts, the bases of $M/0$ and the bases of $M\setminus 0$.
As a result, the lattice points of $Q(M,M) = Q(M) + Q(M)$ partition into the following three parts, with $Q_1 = \frac{1}{2} (Q_0 + Q_2)$:
\begin{itemize}
\item $Q_2 := Q(M,M) \cap H_{\be_0 = 2} = \{2\be_0\} \times Q(M/0,M/0)$,
\item $Q_1 := Q(M,M) \cap H_{\be_0 = 1} = \{\be_0\} \times Q(M/0,M\setminus 0)$, and
\item $Q_0 := Q(M,M) \cap H_{\be_0 = 0} = \{\mathbf 0\}\times Q(M\setminus 0, M \setminus 0)$.
\end{itemize}
The case of setting $x = y = 1$ (cf.\ \Cref{prop:firstproperties}.(4)) in \eqref{eqn:delcontrelation} of \Cref{thm:delcont} witnesses this partition of the lattice points of $Q(M,M)$ .  The following lemma in preparation for the proof of \Cref{thm:delcont} is a consequence of $Q_1 = \frac12 (Q_0 + Q_2)$.

\begin{lem}\label{lem:delcont}
Let the notations be as above.  Then for $B\in \mathcal B(M)$ with $0\notin B$, we have
\[
\Hilb(\Cone_B(Q(M,M)) \cap H_{\be_0 = 1}) = \sum_{\tiny\begin{matrix} I \in \mathcal B(M/0), \\ I \subset B\end{matrix}} t_0t_{B\setminus I}^{-1} \Hilb(\Cone_{(I,B)}(Q(M/0,M\setminus 0))
\]
and
\[
\Hilb(\Cone_B(Q(M,M)) \cap H_{\be_0 = 0}) = \sum_{\tiny\begin{matrix} I \in \mathcal B(M/0), \\ I \subset B\end{matrix}} \Hilb(\Cone_{(I,B)}(Q(M/0,M\setminus 0)).
\]
\end{lem}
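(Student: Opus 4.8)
The plan is to reduce both identities to the generalized Brion formula (\Cref{thm:generalbrion}) applied to a single polyhedron. Since $Q(M,M) = Q(M)+Q(M) = 2Q(M)$, the point $2\be_B$ is a vertex of $Q(M,M)$ and $\Cone_B(Q(M,M)) := \Cone(Q(M,M)-2\be_B) = \Cone(2Q(M)-2\be_B) = \Cone_B(Q(M)) =: C$. This is a pointed cone with apex $\mathbf 0$; by \Cref{thm:flagMatroidBasePolytope} its extreme rays are the edge directions $\be_i-\be_{i'}$ of $Q(M)$ at $\be_B$. Because $0$ is not a coloop, $\be_B$ lies on the facet $Q(M)^{-\be_0} = \{\mathbf 0\}\times Q(M\setminus 0)$, so $C\subseteq H_{\be_0\geq 0}$, and $C\cap H_{\be_0=0}$ is the tangent cone at $\be_B$ of that facet, namely $\Cone_B(Q(M\setminus 0))$ viewed inside $H_{\be_0=0}\cong\RR^n$. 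I would then work with $P_B := C\cap H_{\be_0=1}$, which (as $\be_B$ has zero $\be_0$-coordinate) coincides with $\Cone_B(Q(M,M))\cap H_{\be_0=1}$.

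The next step is to show that $P_B$ is a lattice polyhedron with recession cone $C\cap H_{\be_0=0} = \Cone_B(Q(M\setminus 0))$ and with vertices exactly the points $w_I := \be_0+\be_I-\be_B$ for $I\in\mathcal B(M/0)$ with $I\subset B$. The recession cone claim is immediate from $C(C\cap H_{\be_0=1}) = C\cap H_{\be_0=0}$. For the vertices: the vertices of a cross-section $C\cap H_{\be_0=1}$ of a pointed cone are precisely the points where the extreme rays of $C$ with positive $\be_0$-coordinate meet $H_{\be_0=1}$; such an extreme ray is the direction of an edge of $Q(M)$ from $\be_B$ to an adjacent vertex $\be_{B'}$ with $0\in B'$, which forces $B'=\{0\}\cup I$ with $I = B\setminus\{j\}\in\mathcal B(M/0)$, and the edge direction $\be_{B'}-\be_B = \be_0-\be_j$ meets $H_{\be_0=1}$ at the lattice point $\be_0-\be_j = \be_0+\be_I-\be_B = w_I$ (using $\be_B=\be_I+\be_j$). (The index set is nonempty: since $M/0\twoheadleftarrow M\setminus 0$, the basis $B$ spans $M/0$ and so contains a basis of $M/0$.)

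The crux of the argument, and the step I expect to be the main obstacle, is to identify the tangent cone of $P_B$ at $w_I$ with $\Cone_I(Q(M/0))+\Cone_B(Q(M\setminus 0))$. Setting $v:=\be_0-\be_j$, the point $w_I=v$ lies in the relative interior of the extreme ray $\RR_{\geq 0}v$ of $C$, and a short direct computation gives $\Cone(C-w_I) = C+\RR v$; intersecting with $H_{\be_0=0}$ then shows the tangent cone of $P_B = C\cap H_{\be_0=1}$ at $w_I$ equals $\pi_j(C)$, where $\pi_j\colon\RR^{n+1}\to H_{\be_0=0}\cong\RR^n$ is the linear projection along $v$, determined by $\be_0\mapsto\be_j$ and $\be_i\mapsto\be_i$ for $i\in[n]$. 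Now $\pi_j(C) = \Cone(\pi_j(Q(M))-\be_B)$, and I would split the vertices of $Q(M)$: a vertex $\be_{B'}$ with $0\notin B'$ is $\pi_j$-fixed and, after subtracting $\be_B$, contributes a generator of $\Cone_B(Q(M\setminus 0))$; a vertex $\be_0+\be_{I'}$ with $I'\in\mathcal B(M/0)$ gives $\pi_j(\be_0+\be_{I'})-\be_B = \be_j+\be_{I'}-\be_B = \be_{I'}-\be_I$, a generator of $\Cone_I(Q(M/0))$. Hence this tangent cone is $\Cone_B(Q(M\setminus 0))+\Cone_I(Q(M/0))$, which equals $\Cone_{(I,B)}(Q(M/0,M\setminus 0))$ since a flag matroid base polytope is the Minkowski sum of those of its constituents.

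Finally I would invoke \Cref{thm:generalbrion} for $P_B$, which gives $\Hilb(P_B) = \sum_I\bt^{w_I}\Hilb(\Cone_{(I,B)}(Q(M/0,M\setminus 0)))$ and $\Hilb(C(P_B)) = \sum_I\Hilb(\Cone_{(I,B)}(Q(M/0,M\setminus 0)))$. Since $w_I = \be_0-\be_{B\setminus I}$ gives $\bt^{w_I} = t_0 t_{B\setminus I}^{-1}$, and $C(P_B) = \Cone_B(Q(M\setminus 0)) = \Cone_B(Q(M,M))\cap H_{\be_0=0}$, these are precisely the two asserted identities. Apart from the tangent-cone computation of the third paragraph, the only care needed is in the standard facts about cross-sections, recession cones, and tangent cones of polyhedral cones used along the way.
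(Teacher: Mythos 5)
Your proof is correct, and it ends where the paper's does --- an application of Brion's formula (\Cref{thm:generalbrion}) to the slice --- but it extracts the required Brion data (vertices and tangent cones of the slice) by a genuinely different route. The paper first asserts the Minkowski-sum identity $\Cone_B(Q(M,M)) \cap H_{\be_0 = 1} = \Cone_B(Q(M\setminus 0)) + Q_1 - 2\be_B$ and then locates the vertices of $\Cone_B(Q(M\setminus 0)) + Q_1$ via the relation $Q_1 = \tfrac12(Q_0+Q_2)$ and a linear-functional minimization argument, reading the tangent cones off the Minkowski-sum structure. You instead work directly with the pointed cone $C=\Cone_B(Q(M))$: the vertices of its height-one cross-section are where the extreme rays with positive $\be_0$-coordinate (the basis exchanges $B \mapsto (B\setminus j)\cup\{0\}$) pierce the hyperplane, and you compute each tangent cone by projecting $C$ along the corresponding ray. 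What your route buys is self-containedness: it proves, rather than asserts, the structure of the slice, and it makes explicit the tangent-cone identification $\Cone_{(I,B)}(Q(M/0,M\setminus 0)) = \Cone_I(Q(M/0)) + \Cone_B(Q(M\setminus 0))$ that the paper leaves implicit; the cost is the extra projection computation. The only fact you use tacitly when identifying the \emph{full} vertex set $\{w_I\}$ is that two bases at symmetric difference two span an edge of the matroid polytope, so that every $I\in\mathcal B(M/0)$ with $I\subset B$ really does arise from an extreme ray of $C$; this is classical and does not affect correctness.
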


\begin{proof}
We have an equality of polyhedra
\[
\Cone_B(Q(M,M)) \cap H_{\be_0 = 1} = \Cone_B(Q(M\setminus 0)) + Q_1 - 2\be_B.
\]
We claim that $\Cone_B(Q(M\setminus 0)) + Q_1$ has vertices $\{\be_I + \be_B\}$ for $I \in \mathcal B(M/0)$ such that $I\subset B$.  The two statements in the lemma then follow from Brion's formula \Cref{thm:generalbrion}.

For the claim, we start by noting that if $I \in \mathcal B(M/0)$ then there exists $B' \in \mathcal B(M\setminus 0)$ such that $I\subset B'$ (since $M/0 \twoheadleftarrow M\setminus 0$).  Consequently, if $\be_B$ is the vertex of $Q(M\setminus 0)$ that minimizes $\langle v, \be_B\rangle$ for some $v\in \RR^n$, then a vertex of $Q(M/0)$ that minimizes $\langle v, \cdot\rangle$ must be $\be_I$ satisfying $I\subset B$.  Our claim now follows from $Q_1 = \frac12 (Q_0 + Q_2)$.
\end{proof}

\begin{proof}[Proof of \Cref{thm:delcont}]
  Let us begin by noting that the equation \eqref{eqn:KTT} for $\KT^{\widetilde T}_{M,M}$ reads
\begin{equation}\label{eqn:KTMM}
\KT^{\widetilde T}_{M,M}(u+1, v+1) = \sum_{B\in \mathcal B(M)} \Hilb(\Cone_B(Q(M,M))) \sum_{\pp \subseteq B}\sum_{\qq \subseteq [\widetilde n]\setminus B} \bt^{\be_B + \be_\pp + \be_\qq}u^{r-|\pp|}v^{|\qq|}.
\end{equation}
We apply \Cref{thm:sumOfCones} with $\bz = \be_0$ and $L$ defined by $t_0=0$.
Note that $\Cone_B(Q(M,M)) \in \fP_n^{\bz}$ if and only if $0 \notin B$. Hence all cones occurring in \eqref{eqn:KTMM} with vertex on $L$ are in $\fP_n^{\bz}$, and we find that the terms in \eqref{eqn:KTMM} not divisible by $t_0$ sum to
\begin{multline*}
\sum_{\tiny\begin{matrix} B \in \mathcal B(M),\\ 0 \notin B\end{matrix}}  \Hilb(\Cone_B(Q(M,M))\cap H_{\be_0 = 0}) \sum_{\pp \subseteq B}\sum_{\qq \subseteq [n]\setminus B} \bt^{\be_B + \be_\pp + \be_\qq}u^{r-|\pp|}v^{|\qq|}\\
= \sum_{B\in \mathcal B(M\setminus 0)} \Hilb(\Cone_{B}(M\setminus 0))\sum_{\pp\subseteq B}\sum_{\qq\subseteq [n]\setminus B} \bt^{\be_B + \be_\pp + \be_\qq}u^{r-|\pp|}v^{|\qq|} \\
= \KT^T_{M\setminus 0, M\setminus 0}(u+1, v+1).
\end{multline*}

A similar argument, with $\bz = - \be_0$, shows that the coefficient of $t_0^2$ in \eqref{eqn:KTMM} is $\KT^T_{M/0,M/0}$.

Finally, we apply \Cref{thm:sumOfCones} once more, this time with $\bz=\be_0$ and $L=H_{\be_0=1}$. We find that the terms in \eqref{eqn:KTMM} divisible by $t_0$ but not by $t_0^2$ sum to
\[
\begin{split}
&\Big( \sum_{\tiny\begin{matrix} B \in \mathcal B(M),\\ 0 \notin B\end{matrix}}  \Hilb(\Cone_B(Q(M,M))) \sum_{\pp \subseteq B}\sum_{\qq \subseteq [\widetilde n]\setminus B} \bt^{\be_B + \be_\pp + \be_\qq}u^{r-|\pp|}v^{|\qq|} \left.\Big)\right|_{H_{\be_0 = 1}}\\
&= \Big( \sum_{\tiny\begin{matrix} B \in \mathcal B(M),\\ 0 \notin B\end{matrix}}  \Hilb(\Cone_B(Q(M,M))) \sum_{\pp \subseteq B}\sum_{\qq \subseteq [n]\setminus B} \bt^{\be_B + \be_\pp + \be_\qq}(1+t_0v)u^{r-|\pp|}v^{|\qq|} \left.\Big)\right|_{H_{\be_0 = 1}}\\
&= \sum_{\tiny\begin{matrix} B \in \mathcal B(M),\\ 0 \notin B\end{matrix}}  \Hilb(\Cone_B(Q(M,M))\cap H_{\be_0 = 1}) \sum_{\pp \subseteq B}\sum_{\qq \subseteq [n]\setminus B} \bt^{\be_B + \be_\pp + \be_\qq}u^{r-|\pp|}v^{|\qq|} \\
&\qquad + t_0 \sum_{\tiny\begin{matrix} B \in \mathcal B(M),\\ 0 \notin B\end{matrix}}  \Hilb(\Cone_B(Q(M,M))\cap H_{\be_0 = 0}) \sum_{\pp \subseteq B}\sum_{\qq \subseteq [n]\setminus B} \bt^{\be_B + \be_\pp + \be_\qq}u^{r-|\pp|}v^{|\qq|+1},\\
\end{split}
\]
which by \Cref{lem:delcont} is equal to
\[
\begin{split}
& \sum_{\tiny\begin{matrix} B \in \mathcal B(M),\\ 0 \notin B\end{matrix}}  \sum_{\tiny\begin{matrix} I \in \mathcal B(M/0),\\ I\subset B\end{matrix}}  t_0t_{B\setminus I}^{-1} \Hilb(\Cone_{(I,B)}(Q(M/0,M\setminus 0)))  \sum_{\pp \subseteq B}\sum_{\qq \subseteq [n]\setminus B} \bt^{\be_B + \be_\pp + \be_\qq}u^{r-|\pp|}v^{|\qq|}\\
&\qquad + t_0\sum_{\tiny\begin{matrix} B \in \mathcal B(M),\\ 0 \notin B\end{matrix}}  \sum_{\tiny\begin{matrix} I \in \mathcal B(M/0),\\ I\subset B\end{matrix}}  \Hilb(\Cone_{(I,B)}(Q(M/0,M\setminus 0)))  \sum_{\pp \subseteq B}\sum_{\qq \subseteq [n]\setminus B} \bt^{\be_B + \be_\pp + \be_\qq}u^{r-|\pp|}v^{|\qq|+1}\\
&= t_0 \sum_{(I,B) \in \mathcal B(M/0,M\setminus 0)} \Hilb(\Cone_{(I,B)}(Q(M/0,M\setminus 0))) \Big( \sum_{\pp \subseteq B}\sum_{\qq \subseteq [n]\setminus B} \bt^{\be_I + \be_\pp + \be_\qq}(1+t_{B\setminus I}v) \Big) u^{r-|\pp|}v^{|\qq|}\\
& = t_0 \sum_{(I,B) \in \mathcal B(M/0,M\setminus 0)} \Hilb(\Cone_{(I,B)}(Q(M/0,M\setminus 0))) \Big( \sum_{\pp \subseteq B}\sum_{\qq \subseteq [n]\setminus I} \bt^{\be_I + \be_\pp + \be_\qq} \Big) u^{r-|\pp|}v^{|\qq|}\\
&= t_0\KT^T_{M/0,M\setminus 0}(u+1, v+1),
\end{split}
\]
as desired.
\end{proof}

\begin{rem}
We remark that for a general flag matroid $\MM$, the slices $\{Q(\MM) \cap H_{\be_i = k}\}_{k\in \ZZ}$ need not be flag matroid base polytopes.  Moreover, even when they are, we do not observe an identity like the one in \Cref{thm:delcont} that expresses $\KT_\MM$ in terms of the slices.

For example, consider $\MM = (U_{1,3}, U_{2,3})$.  We have $\KT_\MM(x,y) = x^2y^2 + x^2y+xy^2 + x^2 + 2xy + y^2$.  In any coordinate direction, its three slices are $(U_{0,2},U_{1,2})$, $(U_{1,2},U_{1,2})$, and $(U_{1,2},U_{2,2})$, whose $\KT$ are (respectively), $xy^2+y^2$, $xy+x+y$, and $x^2y+x^2$.
\end{rem}

\begin{rem}
One can generalize \Cref{thm:delcont} as follows.  Denote by $M^\ell := (M, \ldots, M)$, the flag matroid whose constituents are $M$ repeated $\ell$ times.  Then we have
\[
\KT^{\widetilde T}_{M^\ell} = t_0^\ell \KT^T_{(M/0)^\ell} + t_0^{\ell-1}\KT^T_{(M/0)^{\ell-1},M\setminus 0}  + \cdots + \KT_{(M\setminus 0)^\ell}.
\]
The proof is essentially identical to one given for \Cref{thm:delcont}. 
\end{rem}

%%%%%%%%%%%%%%%%%%%%%%%%%%%%%%%%%%%%%%%
\section{Future directions}\label{section:future}

We list two future directions stemming from our work here.

\subsection{$g$ and $h$ polynomials for flag matroids}

For a matroid $M$, Speyer introduced in \cite{Spe09} a polynomial invariant $g_M(t) \in \QQ[t]$ and a close cousin $h_M(t) \in \QQ[t]$, which is related to $g_M(t)$ by $ h_M(t) = (-1)^c g_M(-t)$ where $c$ is the number of connected components of $M$.  A $K$-theoretic interpretation of the polynomial $h_M$ was given in \cite{FS12}.

\begin{thm}\cite[Theorem 6.1 \& Theorem 6.5]{FS12}
Let $M$ be a matroid of rank $r$ on $[n]$ without loops or coloops.  Let $\pi_r, \pi_{(n-1)1}, \alpha, \beta$ be as in \S\ref{subsection:flagmatKclass}.  Then the polynomial $h_M$ is the (unique) univariate polynomial of degree at most $n-1$ such that
\[
(\pi_{(n-1)1})_* \pi_r^* \Big(y(M)\Big) = h_M(\alpha + \beta - \alpha\beta).
\]
\end{thm}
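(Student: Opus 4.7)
The plan is to parallel the strategy behind \Cref{thm:LVT} and \Cref{thm:KT22}: use the push-pull formula of \Cref{prop:fundcomp} to reduce the pushforward to a bivariate polynomial $R_{y(M)}(u,v)$, pass to a $T$-equivariant sum of lattice-point generating functions via \Cref{thm:localization}.(3), and control its coefficients using the cone-flipping tools of \S\ref{section:latticegenfct}.

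First, applying \Cref{prop:fundcomp} with $k=1$ (so that $Fl(\br;n) = Gr(r;n)$) and $\epsilon = y(M)$ gives $(\pi_{(n-1)1})_*\pi_r^*(y(M)) = R_{y(M)}(\alpha-1,\beta-1)$, where $R_{y(M)}(u,v) = \sum_{p,q}\chi(y(M) \cdot [\bigwedge^p\mathcal S][\bigwedge^q \mathcal Q^\vee])u^p v^q$. With $u = \alpha - 1,\ v = \beta - 1$, one has $1 - uv = \alpha + \beta - \alpha\beta$. Since a polynomial $R \in \QQ[u,v]$ of bi-degree at most $(n-1, n-1)$ equals $h(1-uv)$ for a univariate $h$ of degree at most $n-1$ precisely when $[u^p v^q]R = 0$ for all $p \neq q$, the theorem reduces to establishing this off-diagonal vanishing and then identifying the resulting univariate polynomial with Speyer's $h_M$.

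By \Cref{thm:localization}.(3), the $T$-equivariant refinement is
\[
R^T_{y(M)}(u,v) = \sum_{B \in \mathcal B(M)}\Hilb(\Cone_B(M)) \prod_{i \in B}(1 + u t_i^{-1})\prod_{j \in [n]\setminus B}(1 + v t_j),
\]
which upon expanding the two products yields a sum of shifted Hilbert series indexed by triples $(B, \pp, \qq)$ with $\pp \subseteq B$ and $\qq \subseteq [n]\setminus B$. The off-diagonal vanishing can then be attacked via \Cref{thm:sumOfMatroidCones}: for each lattice point $\bk = \be_{S_1} + \cdots + \be_{S_m}$ in the Newton polytope and each off-diagonal bi-degree $(p,q)$, the coefficient of $\bt^{\bk} u^p v^q$ reduces to a combinatorial count over bases $\BB$ of $M$ compatible with the flag $S_1 \subseteq \cdots \subseteq S_m$ (in the sense of \Cref{prop:face}), and the no-loop/no-coloop hypothesis is expected to supply pairings that make these counts vanish. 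Once $R_{y(M)}(u,v) = h(1 - uv)$ is established, the identification $h = h_M$ can be carried out by verifying the deletion-contraction recursion $h_M = h_{M \setminus i} + h_{M/i}$ for non-loop, non-coloop $i$: apply \Cref{thm:sumOfCones} with $\bz = \pm\be_i$ to slice $R^T_{y(M)}$ into deletion and contraction contributions, in the style of \Cref{rem:LVTdelcont} and the proof of \Cref{thm:delcont}, and check a small base case.

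The main obstacle is the off-diagonal vanishing. At the level of individual bases, $\prod_{i \in B}(1 + u t_i^{-1}) \prod_{j \in [n]\setminus B}(1 + v t_j)$ is manifestly not $u \leftrightarrow v$ symmetric, so the collapse to a function of $uv$ is a delicate global cancellation. The no-loops/no-coloops hypothesis is essential: \Cref{eg:canonicalbad} shows that for $\MM = (U_{0,n}, M)$, whose first constituent consists entirely of loops, the analogous pushforward $\KT_\MM$ fails to be a univariate polynomial in $\alpha + \beta - \alpha\beta$. Orchestrating the cone-flipping across bases to exhibit the required cancellations is the technical heart of the argument, and is the step most likely to require a new organizational idea beyond the direct applications of \Cref{thm:sumOfMatroidCones} used elsewhere in the paper.
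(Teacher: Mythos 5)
This theorem is quoted from \cite{FS12} rather than proved in the paper, but the paper's own \Cref{lem:inuv} is exactly the ($k\ge 1$ generalization of the) hard step, so your proposal can be judged against that. Your setup is correct and matches the paper: \Cref{prop:fundcomp} reduces the pushforward to $R_{y(M)}(\alpha-1,\beta-1)$, the substitution $1-uv=\alpha+\beta-\alpha\beta$ reduces the theorem to off-diagonal vanishing plus an identification, and \Cref{thm:localization}.(3) gives the equivariant sum over bases that you write down. The gap is that you do not actually prove the off-diagonal vanishing, and the tool you point to is the wrong one. \Cref{thm:sumOfMatroidCones} computes coefficients only at \emph{vertices} of the permutation-closure polytope $\widetilde P$ and is stated for exponent vectors in $\ZZ_{\ge 0}^n$, whereas here the exponents $-\be_\pp+\be_\qq$ have negative entries and you need \emph{every} coefficient in bidegree $(p,q)$ with $p\ne q$ to vanish, not just the vertex ones. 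No ``pairing'' is needed either: the actual mechanism (the proof of \Cref{lem:inuv}) is that for $p\ne q$ each exponent vector lies in $\{-1,0,1\}^n$ with nonzero coordinate sum, hence lies on some hyperplane $H_{\be_\ell=\pm 1}$; one then applies \Cref{thm:sumOfCones} with $\bz=\pm\be_\ell$, where cooloplessness (resp.\ looplessness) guarantees that the vertex cones minimizing $\langle\be_\ell,\cdot\rangle$ are $\be_\ell$-pointed (resp.\ $-\be_\ell$-pointed) while no contributing cone has its vertex on that slice, so each such slice of $\varphi_{pq}$ is zero and hence $\varphi_{pq}=0$. This is the organizational idea you correctly sense is missing.

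Your second step --- identifying the resulting univariate polynomial with Speyer's $h_M$ by ``verifying the deletion--contraction recursion $h_M=h_{M\setminus i}+h_{M/i}$'' --- is not viable as stated. Unlike the Tutte polynomial, $g_M$ (hence $h_M$) is a valuative invariant that is not known to satisfy the standard deletion--contraction recursion, and you offer no argument for it; moreover the slicing trick of \Cref{rem:LVTdelcont} does not transfer, because without the $\OO(1)$ twist the $t_i$-exponents in the equivariant sum range over $\{-1,0,1\}$, so slicing by $\bz=\pm\be_i$ produces three pieces (with a middle piece involving the pair $(M/i,M\setminus i)$) rather than the two minors you want. The identification in \cite{FS12} is instead carried out by matching the coefficients of the expansion in $\alpha+\beta-\alpha\beta$ directly against the $K$-theoretic definition of $g_M$, not via a recursion.
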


For a flag matroid $\MM$ on $[n]$, this motivates us to consider $(\pi_{(n-1)1})_* \pi_\br^* \Big(y(\MM)\Big)$, where the maps are as in the flag-geometric construction \eqref{construction1}.  By \Cref{prop:fundcomp}, this is equal to
\[\sum_{p,q}\chi \Big( y(\MM) [\bigwedge^p \mathcal S_k][\bigwedge^q \mathcal Q_1^\vee]  \Big)(\alpha-1)^p(\beta-1)^q.
\]
Let us consider its torus-equivariant version
\[
\sum_{p,q} \chi^T \Big( y(\MM)^T [\bigwedge^p \mathcal S_k]^T[\bigwedge^q \mathcal Q_1^\vee]^T   \Big)u^pv^q
\]
where $u$ and $v$ are formal variables.  We show that this is a polynomial in $uv$, which thereby establishes that $(\pi_{(n-1)1})_* \pi_\br^* \Big(y(\MM)\Big)$ is a polynomial in $\alpha+\beta - \alpha\beta$ (since the substitution $u= \alpha-1, v = \beta -1$ yields $1 -uv = \alpha + \beta - \alpha\beta$).

\begin{lem}\label{lem:inuv} (cf.\ \cite[Lemma 6.2]{FS12})
Let $\MM=(M_1, \ldots, M_k)$ be a flag matroid on $[n]$, and suppose every constituent of $\MM$ is both loopless and coloopless.  Then
\[
\sum_{p,q} \chi^T \Big( y(\MM)^T [\bigwedge^p \mathcal S_k]^T[\bigwedge^q \mathcal Q_1^\vee]^T   \Big)u^pv^q \in \QQ[u,v]
\]
is a polynomial in $\QQ[uv]$.
\end{lem}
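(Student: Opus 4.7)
The plan is to adapt Fink and Speyer's proof of the corresponding matroid statement \cite[Lemma 6.2]{FS12} to the flag setting. By equivariant localization (\Cref{thm:localization}.(3)) applied to the proper structure map $Fl(\br;n) \to \operatorname{Spec}\CC$, together with the explicit fiber formulas for $[\bigwedge^p \mathcal S_k]^T$ and $[\bigwedge^q \mathcal Q_1^\vee]^T$ recalled in \S\ref{subsection:GKZ}, I would first expand
\[
\Phi^T(u,v) := \sum_{p,q}\chi^T\Big(y(\MM)^T[\bigwedge^p \mathcal S_k]^T[\bigwedge^q \mathcal Q_1^\vee]^T\Big)u^pv^q = \sum_{\BB \in \mathcal B(\MM)} \Hilb(\Cone_\BB(\MM))\Big(\sum_{\pp \subseteq B_k}\bt^{-\be_\pp}u^{|\pp|}\Big)\Big(\sum_{\qq \subseteq [n]\setminus B_1}\bt^{\be_\qq}v^{|\qq|}\Big).
\]
A priori the coefficient $c_{p,q}(\bt)$ of $u^pv^q$ lies in $\ZZ[\bt^\pm]$, so the claim reduces to showing that each $c_{p,q}(\bt)$ is a scalar in $\QQ$ that vanishes unless $p = q$.

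The key observation is a hyperplane-grading argument: each monomial $\bt^{-\be_\pp + \be_\qq}$ has coordinate sum $q-p$, while $\Hilb(\Cone_\BB(\MM))$ is supported on the hyperplane $\{x : \sum x_i = 0\}$, since $\Cone_\BB(\MM)$ is generated by root directions $\be_i - \be_j$ by \Cref{thm:flagMatroidBasePolytope}. Hence $c_{p,q}(\bt)$ is supported in $\{x : \sum x_i = q - p\}$, and it therefore suffices to prove the stronger fact that the Newton polytope of $c_{p,q}(\bt)$ is contained in $\{0\}$: once this is established, a nonzero constant can only live in the hyperplane $\{\sum x_i = 0\}$, forcing $c_{p,q}(\bt) = 0$ whenever $p \neq q$ and $c_{p,p}(\bt) \in \QQ$.

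To show that the Newton polytope of $c_{p,q}(\bt)$ is contained in $\{0\}$, I would shift by $\bt^{\be_{[n]}}$ to place all vertices in $\ZZ_{\geq 0}^n$ and apply \Cref{thm:sumOfMatroidCones}. The theorem computes the coefficient at each vertex $v$ of the symmetrized Newton polytope $\widetilde P_{p,q}$ as a combinatorial count of admissible tuples $(\pp, \qq)$ compatible with a specific adapted basis $\BB = (B_1, \ldots, B_k)$. The loopless hypothesis on $M_1$ guarantees that every element appears in some basis of $M_1$, eliminating boundary vertices of $v$ that would force particular elements into $\qq$; dually, the coloopless hypothesis on $M_k$ eliminates the $\pp$-side boundary vertices. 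Lattice points of $P_{p,q}$ that fail to be vertices of $\widetilde P_{p,q}$ are handled by iterating the slice-of-cones technique \Cref{thm:sumOfCones} along generic directions $\bz$, isolating each monomial in turn.

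The main obstacle is the detailed combinatorial case analysis in the previous step: verifying that the loopless and coloopless conditions on every constituent of $\MM$ genuinely preclude all nonzero Newton polytope points besides the origin. The matroid case ($k=1$) is treated in \cite[Lemma 6.2]{FS12}; for general $k$ I expect to reduce to the two-step case $(M_1, M_k)$, since only $B_1$ and $B_k$ appear directly in the formula for $\Phi^T$, and intermediate constituents $M_2, \ldots, M_{k-1}$ enter only through the compatibility constraint that any valid $(B_1, B_k)$ extends to a full basis of $\MM$, which is always achievable via the Higgs factorization.
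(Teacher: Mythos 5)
Your setup matches the paper's: the localization expansion, the observation that $\Hilb(\Cone_\BB(\MM))$ is supported on $\{\sum_i x_i = 0\}$ so that the coefficient of $u^pv^q$ lives on $\{\sum_i x_i = q-p\}$, and the reduction to showing each coefficient has Newton polytope contained in $\{0\}$ are all exactly what the paper does. The gap is in how you establish that last containment. \Cref{thm:sumOfMatroidCones} only computes coefficients at \emph{vertices} of $\widetilde P$, and your fallback for the remaining lattice points --- ``iterating \Cref{thm:sumOfCones} along generic directions $\bz$, isolating each monomial in turn'' --- does not go through: for a lattice point in the interior of $P$ and a generic $\bz$, there are cones whose vertices lie strictly below the slicing hyperplane and which are not $\bz$-pointed, so the hypothesis of \Cref{thm:sumOfCones} fails and it gives no information about that slice. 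The paper avoids this entirely by exploiting that all the cone vertices $-\be_\pp+\be_\qq$ lie in the cube $[-1,1]^n$, whose only interior lattice point is the origin; it therefore suffices to kill the $2n$ boundary slices $\varphi_{ij}|_{H_{\be_\ell=\pm 1}}$. Taking $\bz=\be_\ell$ and slicing at $-1$, the hypothesis of \Cref{thm:sumOfCones} is vacuous (no vertex lies below the slice), and the coloopless assumption enters precisely here: since $\ell$ is not a coloop of $M_k$, the cone $\Cone_\BB(\MM)$ is $\be_\ell$-pointed exactly when $\ell\notin B_k$, and for such $\BB$ one has $\ell\notin\pp$, so every surviving shifted cone sits in $\{x_\ell\geq 0\}$ and misses $H_{\be_\ell=-1}$; dually, looplessness of $M_1$ kills the slice at $H_{\be_\ell=1}$ via $\bz=-\be_\ell$. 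Your stated intuition for the role of the hypotheses is in the right direction, but as written it is not a proof.

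A second, smaller problem: your proposed reduction from general $k$ to the two-step case $(M_1,M_k)$ is not justified. Although only $B_1$ and $B_k$ appear in the inner sums, the factor $\Hilb(\Cone_\BB(\MM))$ is the Hilbert series of the tangent cone of $Q(\MM)=Q(M_1)+\cdots+Q(M_k)$ and genuinely depends on all constituents. Fortunately no such reduction is needed: the facet-slicing argument above works verbatim for arbitrary $k$, since the only input it requires is which tangent cones are $\pm\be_\ell$-pointed, and that is controlled by $M_k$ and $M_1$ alone.
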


We remark that the condition about a flag matroid $\MM = (M_1, \ldots, M_k)$ being loopless or coloopless depends only on $M_1$ or $M_k$, respectively.  First, note that by the condition (2) in \Cref{defn:matroidquotient}, if $\ell\in [n]$ is a loop in $M_i$ then it is a loop in $M_{i-1}$ also.  By duality, if $\ell\in [n]$ is a coloop in $M_i$ then it is a coloop in $M_{i+1}$ also.  Hence, the flag matroid $\MM$ is loopless (coloopless) if and only if $M_1$ has no loops ($M_k$ has no coloops).

\begin{proof}
Once more by \Cref{thm:localization}.(3), we get
\[
\sum_{p,q} \chi^T \Big( y(\MM)^T [\bigwedge^p \mathcal S_k]^T[\bigwedge^q \mathcal Q_1^\vee]^T   \Big)u^pv^q = \sum_{\BB \in \MM}{\Hilb(\Cone_{\BB}(\MM))\sum_{\pp \subseteq B_k}\sum_{\qq\subseteq [n]\setminus B_1}{\bt^{-\be_\pp+\be_\qq}u^{|\pp|}v^{|\qq|}}}.
\]
Fix $|\pp|=i, |\qq|=j$, and consider the sum 
\begin{equation}
\label{eqn:HTterm}
\varphi_{ij}=\sum_{\BB \in \MM}{\Hilb(\Cone_{\BB}(\MM))\sum_{\tiny\begin{matrix} \pp \subseteq B_k,\\ |\pp|=i\end{matrix}}\sum_{\tiny\begin{matrix} \qq \subseteq [n] \setminus B_1,\\ |\qq|=j\end{matrix}}{\bt^{-\be_\pp+\be_\qq}}}.
\end{equation}
We need show that $\varphi_{ij}$ is zero if $i\neq j$.  Let $P$ be the convex hull of $\{-\be_\pp+\be_\qq\}$ appearing in the summation \eqref{eqn:HTterm}.  Note that $P$ is contained in the intersection of $H_{\be_{[n]} = j-i}$ and the cube $\{ x\in \RR^n \mid -1 \leq x_\ell \leq 1 \ \forall \ell \in [n]\}$.  By \Cref{cor:vertexcase}.\ref{vertexcase:2}, it thus suffices to show that $\varphi_{ij}|_{H_{\be_\ell = -1}} = 0$ and $\varphi_{ij}|_{H_{\be_\ell = 1}} = 0$ for all $\ell \in [n]$.

Let us now fix any $\ell\in [n]$.
As none of the constituents have coloops (and in particular $\ell$ is not a coloop in $M_k$), the intersection $Q(\MM) \cap H_{\be_\ell = 0}$ is a non-empty face of $Q(\MM)$ minimizing in the $\be_\ell$ direction, consisting of bases $\BB = (B_1, \ldots B_k)$ such that $\ell \notin B_k$.  Thus, we have that $\Cone_{\BB}(\MM) \in \fP_n^{\be_\ell}$ if and only if $\ell \notin B_k$, and by \Cref{thm:sumOfCones} with $\bz = \be_\ell$ we have
\[
\varphi_{ij}|_{H_{\be_\ell=-1}} = \sum_{\BB: \ell \notin B_k}\sum_{\tiny\begin{matrix} \pp \subseteq B_k,\\ |\pp|=i\end{matrix}}\sum_{\tiny\begin{matrix} \qq \subseteq [n] \setminus B_1,\\ |\qq|=j\end{matrix}}{\Hilb((-\be_\pp+\be_\qq+\Cone_{\BB}(\MM))|_{H_{\be_\ell =-1}})}.
\]
But since $\ell \notin B_k$ implies $\ell \notin \pp$, every cone $-\be_\pp+\be_\qq+\Cone_{\BB}(\MM)$ occurring in the sum above will have vertex $v$ with $v_\ell >-1$.  Hence, noting that $\Cone_{\BB}(\MM) \in \fP_n^{\be_\ell}$ for such cones, we have $\varphi_{ij}|_{H_{\be_\ell=-1}}=0$. A similar argument with $\bz=-\be_\ell$, noting that $\ell$ is not a loop in $M_1$, shows that $\varphi_{ij}|_{H_{\be_\ell=1}}=0$.
\end{proof}

We thus make the following definition that generalizes the polynomial $h_M$ of a matroid $M$ to the setting of flag matroids.  It is well-defined by \Cref{lem:inuv}.

\begin{defn}
Let $\MM=(M_1, \ldots, M_k)$ be a flag matroid $[n]$ such that every constituent of $\MM$ is both loopless and coloopless.  Let $\pi_{(n-1)1}, \pi_\br, \alpha, \beta$ be as in \S\ref{section:fundcomp}.  Then the polynomial $h_\MM$ is defined as the (unique) univariate polynomial of degree at most $n-1$ such that
\[
(\pi_{(n-1)1})_* \pi_\br^* \Big(y(\MM)\Big) = h_\MM(\alpha+\beta-\alpha\beta).
\]
\end{defn}

\begin{rem}\label{rem:LVHbad}
We have constructed the polynomial $h_\MM$ via the flag-geometric diagram \eqref{construction1}.  Although one may also consider a similar construction via the "Las Vergnas" diagram \eqref{construction2}, a computer computation (\S\ref{subsection:computer}) shows that the analogue of \Cref{lem:inuv} fails in this case, for instance with $\MM = (U_{2,4},U_{3,4})$.
\end{rem}

In the case of matroids realizable over $\CC$, the behavior of the polynomial $g_M$ of a matroid $M$, in particular the non-negativity of its coefficients, was used to establish a bound on the number of interior faces in a matroidal subdivision of a base polytope of a matroid \cite{Spe09}.  Extending these results to arbitrary matroids is so far open, but an announcement of a relevant forthcoming work has been made in \cite{LdMRS20}.

\medskip
In another work \cite{BEZ20}, the authors study flag-matroidal subdivisions of base polytopes of flag matroids, and extend the tropical geometry of matroids used in \cite{Spe09} to the setting of flag matroids.  We are thus led to ask the following.

\begin{ques}
Does a suitable modification of our polynomial $h_\MM$ give an analogue of the polynomial $g_M$ for flag matroids, and does its behavior lead to a bound on the number of interior faces in a flag-matroidal subdivision of a base polytope of a flag matroid?
\end{ques}

\subsection{Characteristic polynomials of matroid morphisms}

A recent breakthrough in matroid theory is the log-concavity of the coefficients of the characteristic polynomial of a matroid \cite{AHK18}.  We consider here several candidates for characteristic polynomials of morphisms of matroids.  We begin with the one coming from the flag-geometric Tutte polynomial.

\begin{defn} For a flag matroid $\MM$, define the \textbf{flag-geometric characteristic polynomial} $K\chi_\MM(q)$ of $\MM$ by
\[
K\chi_\MM(q) := (-1)^{r_k}\KT_\MM(1-q,0).
\]
\end{defn}

Like the usual characteristic polynomial, the polynomial $\KT_\MM$ satisfies $K\chi_\MM(q) = 0$ whenever the first constituent $M_1$ of $\MM$ has a loop by \Cref{prop:firstproperties}.  The following conjecture is supported by computer computations (\S\ref{subsection:computer}).  It suggests that the flag-geometric characteristic polynomial of a two-step flag matroid may contain little information about the flag matroid itself.

\begin{conj}
Let $M$ be a matroid of rank $r$ with no loops, so that $U_{1,n} \twoheadleftarrow M$ is a valid matroid quotient.  Then $K\chi_{(U_{1,n},M)}(q) = (q-1)^r$.
\end{conj}

Let us now turn to the Las Vergnas Tutte polynomial.  The last two bullet points of \Cref{rem:LVTtoT} suggest two different ways of generalizing the characteristic polynomial of a matroid.  The case of $\MM = (M,M)$ gives rise to the polynomial $p_\MM(q,s) := (-1)^{r_2} LV\mathcal T_\MM(1-q,0,-s)$, which was studied by Las Vergnas as the \textbf{Poincar\'e polynomial} of a matroid quotient \cite[\S4]{LV80}.  Here we introduce another generalization following the case of $\MM = (U_{0,n},M)$.

\begin{defn}
For a flag matroid $\MM = (M_1, M_2)$ on $[n]$, define its \textbf{beta polynomial} $\beta_\MM(q)$ by
\[
\beta_\MM(q) := (-1)^{r_2 - r_1}LV\mathcal T_\MM(0,0,-q).
\]
\end{defn}

When $\MM = (U_{0,n},M)$, it follows from $LV\mathcal T_\MM(x,y,z) = T_M(z+1,y)$ that $\beta_\MM(q) = \chi_M(q)$, the characteristic polynomial of $M$.  The terminology for $\beta_\MM(q)$ is motivated by \Cref{prop:betapolynomial} below.  First, let us recall that the \textbf{beta invariant} $\beta(M)$ of a matroid $M$ of rank $r$ is defined as
\[
\beta(M) := \textstyle(-1)^{r-1} \left( \left.\frac{d}{dq}\chi_M(q)\right|_{q=1}\right),
\]
and that if $e$ is an element that is neither a loop nor a coloop, then $\beta(M/e) + \beta(M\setminus e) = \beta(M)$ \cite{Cra67}.

\begin{prop}\label{prop:betapolynomial}
Let $M_1 = M^{(r_2 - r_1)}\twoheadleftarrow \cdots\twoheadleftarrow M^{(1)} \twoheadleftarrow M^{(0)} = M_2$ be the Higgs factorization of a matroid quotient $M_1 \twoheadleftarrow M_2$ as described in \S\ref{subsection:flagmat}.  The beta polynomial $\beta_{M_1,M_2}(q)$ is divisible by $(q-1)$, and the \textbf{reduced beta polynomial} of $M_1 \twoheadleftarrow M_2$, defined as $\overline{\beta}_{M_1,M_2}(q) := \beta_{M_1,M_2}(q)/(q-1)$, satisfies
\[
\overline \beta_{M_1,M_2}(q) = \sum_{i = 0}^{r_2 - r_1 -1} (-1)^{r_2 - r_1 -1 - i} \big(\beta(M^{(i)}) + \beta(M^{(i+1)}) \big) q^i.
\]
\end{prop}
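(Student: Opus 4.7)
The plan is a direct corank-nullity computation, together with sign-matching against the derivatives of the characteristic polynomials of the Higgs lifts. Throughout let $m := r_2 - r_1$.

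I would first substitute $(x,y,z)=(0,0,-q)$ into Definition~\ref{defn:LVT}: writing $d(S):=r_2(S)-r_1(S)$ and simplifying the exponent of $-1$ modulo $2$, the formula collapses to
\[
\beta_{M_1,M_2}(q)\;=\;(-1)^{r_1}\sum_{S\subseteq[n]}(-1)^{|S|}\,q^{m-d(S)}\;=\;(-1)^{r_1}\sum_{k=0}^{m}N_k\,q^{m-k},
\]
where $N_k:=\sum_{S:\,d(S)=k}(-1)^{|S|}$. Assuming $n\geq 1$ (the case $n=0$ is vacuous), $\sum_k N_k = \sum_{S\subseteq[n]}(-1)^{|S|}=0$, so $\beta_\MM(1)=0$ and $(q-1)\mid\beta_\MM(q)$. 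Dividing by $(q-1)$ via partial summation yields
\[
[q^j]\,\overline\beta_\MM(q)\;=\;-(-1)^{r_1}\sum_{k=m-j}^{m}N_k.
\]

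Next I would use the standard rank function of the Higgs lift, $\operatorname{rk}_{M^{(i)}}(S)=\min\bigl(r_2(S),\,r_1(S)+m-i\bigr)$, which follows from the basis description in \S\ref{subsection:flagmat} by a short exchange argument. Setting $a(S):=r_1-r_1(S)$ and $b(S):=r_2-r_2(S)$ so that $b(S)-a(S)=m-d(S)$, the corank-nullity formula for $\chi_{M^{(j)}}$ rewrites as
\[
\chi_{M^{(j)}}(q)\;=\;\sum_S(-1)^{|S|}\,q^{\max(a(S),\,b(S)-j)}.
\]
A case split on the exponent shows $\max(a,b-j)-\max(a,b-j-1)\in\{0,1\}$, equaling $1$ exactly when $d(S)\leq m-j-1$. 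Differentiating at $q=1$ and again using $\sum_S(-1)^{|S|}=0$ then telescopes to
\[
\chi'_{M^{(j)}}(1)-\chi'_{M^{(j+1)}}(1)\;=\;\sum_{S:\,d(S)<m-j}(-1)^{|S|}\;=\;-\sum_{k=m-j}^{m}N_k.
\]

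Finally, the identity $\beta(M^{(i)})=(-1)^{r_2-i-1}\chi'_{M^{(i)}}(1)$ (a consequence of $\chi_M(q)=\sum_S(-1)^{|S|}q^{r-r(S)}$ together with $\chi_M(1)=0$) produces
\[
\beta(M^{(j)})+\beta(M^{(j+1)})\;=\;(-1)^{r_2-j-1}\bigl(\chi'_{M^{(j)}}(1)-\chi'_{M^{(j+1)}}(1)\bigr)\;=\;(-1)^{r_2-j}\sum_{k=m-j}^{m}N_k.
\]
Comparing with the formula for $[q^j]\,\overline\beta_\MM(q)$ above and using $r_1+r_2+m\equiv 0\pmod 2$ yields the claimed sign $(-1)^{m-1-j}$.

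The main obstacle is careful sign bookkeeping across the several factors of $(-1)^{(\text{stuff})}$, together with ensuring the Higgs-lift rank-function formula is available — the latter is classical (cf.~\cite[\S7.3]{Oxl11}) but must be cited or verified in a line. Beyond that, the argument is a sequence of direct manipulations with no deeper input.
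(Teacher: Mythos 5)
Your proposal is correct, but it takes a genuinely different route from the paper. The paper's proof leans on Las Vergnas's structural theorem \cite[Theorem 3.1]{LV80}, which expands $LV\mathcal T_\MM(x,y,z)$ in powers of $z$ with coefficients $t_i(\MM;x,y)$ written as rational expressions in the Tutte polynomials $T_{M^{(i)}}(x,y)$ of the Higgs factorization; it then evaluates each $t_i$ at $(0,0)$ via the limit $\lim_{q\to 1} T_M(1-q,0)/(q-1)$ to extract the beta invariants. You instead compute everything directly from corank--nullity data: you collapse the definition of $LV\mathcal T_\MM(0,0,-q)$ to $(-1)^{r_1}\sum_S(-1)^{|S|}q^{m-d(S)}$, divide by $(q-1)$ by partial summation, and independently express $\chi'_{M^{(j)}}(1)-\chi'_{M^{(j+1)}}(1)$ using the classical Higgs-lift rank function $\operatorname{rk}_{M^{(i)}}(S)=\min\bigl(r_2(S),\,r_1(S)+m-i\bigr)$. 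I checked the sign bookkeeping ($r_1+r_2+m\equiv 0 \bmod 2$, the telescoping via $\sum_S(-1)^{|S|}=0$, and the exponent identity $\max(a,b-j)-\max(a,b-j-1)=\mathbf{1}[d(S)<m-j]$) and it all goes through; the formula also agrees on small cases. Each approach imports one external ingredient --- the paper cites \cite[Theorem 3.1]{LV80}, you cite the Higgs-lift rank formula --- but yours is arguably more self-contained and elementary, at the cost of more delicate sign manipulation; the paper's route makes the appearance of the $\beta(M^{(i)})$ more structurally transparent. One small remark: your identity $\beta(M^{(i)})=(-1)^{r_2-i-1}\chi'_{M^{(i)}}(1)$ is not something you need to derive --- it is literally the paper's definition of the beta invariant with $r=r_2-i$.
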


If $\widetilde{M}^{(i)}$ is the (unique) matroid on $[n]\sqcup \{0\}$ such that $\widetilde{M}^{(i)} / 0 = {M}^{(i+1)}$ and $\widetilde{M}^{(i)} \setminus 0 = {M}^{(i)}$, which exists by \cite[\S7.3]{Oxl11}, then $\beta({M}^{(i)}) + \beta({M}^{(i+1)}) = \beta(\widetilde{M}^{(i)})$.  So, the proposition says that 
\[
\overline{\beta}_{M_1,M_2}(q) = \sum_{i=0}^{d-1} (-1)^{d-1-i}\beta(\widetilde{M}^{(i)}) \qquad \text{where $d = r_2 - r_1$}.
\]

\begin{proof}
Let $\MM = (M_1, M_2)$ and $d = r_2 - r_1$.  \cite[Theorem 3.1]{LV80} states that
\[
LV\mathcal T_\MM(x,y,z) = \sum_{i=0}^d t_i(\MM;x,y)z^i,
\]
where
\[
t_i(\MM;x,y) = \frac{1}{xy-x-y} \Big((y-1)T_{M^{(i-1)}}(x,y) + (-xy+x+y-2)T_{M^{(i)}}(x,y)+ (x-1)T_{M^{(i+1)}}(x,y) \Big)
\]
for $i = 1, \ldots, d-1$, and
\[
\begin{split}
t_0(\MM; x,y)= \frac{1}{xy-x-y} \Big( -T_{M^{(0)}}(x,y) + (x-1)T_{M^{(1)}}(x,y) \Big),\\
t_d(\MM;x,y) = \frac{1}{xy-x-y} \Big( (y-1)T_{M^{(d-1)}}(x,y) - T_{M^{(d)}}(x,y) \Big).
\end{split}
\]
Let us express the beta invariant $\beta(M)$ of a matroid $M$ of rank $r$ equivalently as
\[
\begin{split}
\beta(M) &= \textstyle(-1)^{r-1} \left( \left.\frac{d}{dq}\chi_M(q)\right|_{q=1}\right),\\
&=  (-1)^{r-1}\lim_{q\to 1} \frac{(-1)^rT_M(1-q,0) - (-1)^rT_M(0,0)}{q-1} = - \lim_{q\to 1} \frac{T_M(1-q,0)}{q-1}.
\end{split}
\]
As $LV\mathcal T_\MM(x,y,z)$ is a polynomial, each $t_i(\MM;x,y)$ is also a polynomial.  Hence, we have $t_i(\MM;0,0) = \lim_{q\to 1} t_i(\MM; 1-q,0)$, and thus the above expressions for $t_i(\MM;x,y)$ give
\[
\begin{split}
&t_i(\MM;0,0) = \beta(M^{(k-1)}) + 2\beta(M^{(k)}) + \beta(M^{(k+1)}) \quad \textnormal{for }i = 1, \ldots, d-1, \textnormal{ and}\\
&t_0(\MM;0,0) = \beta(M^{(0)}) + \beta(M^{(1)}),\\
&t_d(\MM;0,0) = \beta(M^{(d-1)}) + \beta(M^{(d)}).
\end{split}
\]
As a result, we have
\[
(-1)^d LV\mathcal T_\MM(0,0,-q) = (q-1) \Big(\sum_{i = 0}^{d-1} (-1)^{d-1-i}\big(\beta(M^{(i)}) + \beta(M^{(i+1)}) \big)q^i\Big),
\]
yielding the desired result for the reduced beta polynomial $\overline{\beta}_\MM(q)$.
\end{proof}

Log-concavity of the coefficients of the reduced characteristic polynomial $\overline{\chi}_M(q) = \frac{\chi_M(q)}{q-1}$ was established in \cite{AHK18}.  This motivates the following conjecture.  

\begin{conj}
The coefficients of $\overline \beta_{M_1,M_2}(q)$ form a log-concave sequence.  Consequently, the coefficients of $LV\mathcal T_\MM(0,0,q)$ form a log-concave sequence.
\end{conj}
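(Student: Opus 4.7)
The plan is to reduce the first assertion to a log-concavity inequality for the beta invariants of the matroids in the Higgs tower, then derive the second assertion from the first by an elementary manipulation, and finally attempt the beta-invariant log-concavity either by exhibiting an auxiliary matroid whose characteristic polynomial packages these invariants or by a direct Hodge-theoretic argument in the flag-matroidal setting.

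First, applying Proposition~\ref{prop:betapolynomial}, the coefficient of $q^i$ in $\overline{\beta}_{M_1,M_2}(q)$ is $(-1)^{d-1-i}\beta(\widetilde{M}^{(i)})$, with $d = r_2 - r_1$. Hence the signs strictly alternate and the absolute-value sequence is precisely $(\beta(\widetilde{M}^{(i)}))_{i=0}^{d-1}$, so the first assertion reduces to the inequalities
\[
\beta(\widetilde{M}^{(i)})^2 \;\geq\; \beta(\widetilde{M}^{(i-1)})\,\beta(\widetilde{M}^{(i+1)}) \quad \text{for } 1 \leq i \leq d-2.
\]
For the second assertion, unraveling the definitions of $\beta_\MM$ and $\overline{\beta}_\MM$ gives
\[
LV\mathcal{T}_\MM(0,0,q) \;=\; (1+q)\sum_{i=0}^{d-1}\beta(\widetilde{M}^{(i)})\,q^i,
\]
and since multiplication by $(1+q)$ preserves log-concavity of strictly positive sequences (no internal zeros), the second assertion follows from the first.

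For the key inequality itself, two parallel strategies appear most promising. Strategy (a) is to find a single matroid $N$ on an enlarged ground set whose reduced characteristic polynomial (or a mild polynomial transform thereof) has $\beta(\widetilde{M}^{(i)})$ as its $i$-th coefficient, thereby allowing direct invocation of the log-concavity theorem of \cite{AHK18}. A natural candidate is the matroid $N$ on $[n] \sqcup [d]$ with $M_1 = N/[d]$ and $M_2 = N \setminus [d]$, which exists by condition (3) of Definition~\ref{defn:matroidquotient}; one would seek an identity expressing $\beta(\widetilde{M}^{(i)})$ combinatorially in terms of ranked-subset counts for $N$. Strategy (b) is to show directly that the polynomial $p(q) := \sum_{i=0}^{d-1}\beta(\widetilde{M}^{(i)})\,q^i$ is Lorentzian in the sense of Brändén--Huh, for instance by realizing it (after homogenization) as a degree polynomial of a distinguished class in a Chow ring attached to the flag matroid base polytope $Q(\MM)$ or to the Higgs tower, and deducing the inequality from the corresponding Hodge--Riemann relations.

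The principal obstacle is that the invariants $\beta(\widetilde{M}^{(i)})$ for distinct $i$ are attached to genuinely different matroids, linked only by the combinatorial structure of the Higgs factorization, whereas \cite{AHK18} yields log-concavity along the coefficient sequence of a single matroid's characteristic polynomial. Bridging this gap requires either an explicit combinatorial-algebraic identity repackaging the Higgs tower into a single matroid (Strategy (a))---for which the behaviour of $\beta$ under single-element extensions suggests inductive promise but no clean closed form is yet apparent---or a Hodge theory on the base polytope of the flag matroid furnishing the required Hodge--Riemann inequalities (Strategy (b)); verifying the latter in full flag-matroidal generality, beyond the realizable case, is expected to be the technically demanding step.
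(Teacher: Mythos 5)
The statement you are addressing is a \emph{conjecture} in the paper: the authors offer no proof, and your attempt does not supply one either. Your preliminary reductions are sound and essentially reproduce what the paper already makes explicit around \Cref{prop:betapolynomial}: the coefficients of $\overline\beta_{M_1,M_2}(q)$ are, up to alternating sign, the beta invariants $\beta(\widetilde M^{(i)})$ of the single-element extensions interpolating the Higgs factorization, so the first assertion is equivalent to $\beta(\widetilde M^{(i)})^2 \geq \beta(\widetilde M^{(i-1)})\beta(\widetilde M^{(i+1)})$, and the identity $LV\mathcal T_\MM(0,0,q) = (1+q)\sum_i \beta(\widetilde M^{(i)})q^i$ correctly relates the two assertions. (One caveat even here: your deduction of the second assertion from the first invokes preservation of log-concavity under multiplication by $1+q$, which requires the sequence $\beta(\widetilde M^{(0)}),\ldots,\beta(\widetilde M^{(d-1)})$ to have no internal zeros; since $\beta$ vanishes on disconnected matroids, this is not automatic and would need to be addressed.)

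The genuine gap is the central inequality itself, which is the entire content of the conjecture and which you explicitly leave open. Strategy (a) founders on exactly the obstacle you identify: the invariants $\beta(\widetilde M^{(i)})$ belong to $d$ distinct matroids, and \cite{AHK18} controls only the coefficient sequence of a single matroid's characteristic polynomial; no identity packaging the Higgs tower into one such sequence is exhibited, and the candidate matroid $N$ with $M_1 = N/S$, $M_2 = N\setminus S$ is not even unique when $d\geq 2$, so the proposed construction is not well-posed as stated. Strategy (b) asks for a Lorentzian or Hodge--Riemann structure on a Chow ring attached to the flag matroid, which is precisely the kind of theory whose existence (beyond the realizable case) is the open problem; asserting that it ``would'' yield the inequality is not an argument. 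In short, your write-up is a correct restatement of the conjecture in terms of beta invariants together with two research proposals, not a proof, and it should not be presented as resolving the statement.
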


The coefficients of $LV\mathcal T_\MM(1,1,q)$ were shown to be (ultra) log-concave in \cite{EH20}.

\subsection*{Acknowledgements}

The authors thank Alex Fink and Mateusz Micha{\l}ek for helpful conversations.  The authors also thank the hospitality of the Max Planck Institute for Mathematics in the Sciences (MiS) in Leipzig.  Lastly, the authors thank the anonymous referee for a careful reading and helpful suggestions.  C.E.\ is partially supported by US National Science Foundation (DMS-2001854).

\bibliography{DinuEurSeynnaeve_KTutteMatMorph_v2}

\begin{thebibliography}{LdMRS20}

\bibitem[AHK18]{AHK18}
Karim Adiprasito, June Huh, and Eric Katz.
\newblock Hodge theory for combinatorial geometries.
\newblock {\em Ann. of Math. (2)}, 188(2):381--452, 2018.

\bibitem[BEZ20]{BEZ20}
Madeline Brandt, Christopher Eur, and Leon Zhang.
\newblock The flag variety of tropical linear spaces.
\newblock {\em forthcoming}, 2020.

\bibitem[BGW03]{BGW03}
Alexandre~V. Borovik, Israel~M. Gelfand, and Neil White.
\newblock {\em Coxeter matroids}, volume 216 of {\em Progress in Mathematics}.
\newblock Birkh\"{a}user Boston, Inc., Boston, MA, 2003.

\bibitem[BHS09]{BHS09}
Matthias Beck, Christian Haase, and Frank Sottile.
\newblock Formulas of {B}rion, {L}awrence, and {V}archenko on rational
  generating functions for cones.
\newblock {\em Math. Intelligencer}, 31(1):9--17, 2009.

\bibitem[Bri37]{brianchon}
Charles~Julien Brianchon.
\newblock {\em Th{\'e}oreme nouveau sur les polyedres}, volume~15.
\newblock 1837.

\bibitem[Bri88]{Bri88}
Michel Brion.
\newblock Points entiers dans les poly\`edres convexes.
\newblock {\em Ann. Sci. \'{E}cole Norm. Sup. (4)}, 21(4):653--663, 1988.

\bibitem[Bry86]{Bry86}
Thomas Brylawski.
\newblock Constructions.
\newblock In {\em Theory of matroids}, volume~26 of {\em Encyclopedia Math.
  Appl.}, pages 127--223. Cambridge Univ. Press, Cambridge, 1986.

\bibitem[CDMS20]{CDMS20}
Amanda Cameron, Rodica Dinu, Mateusz Michalek, and Tim Seynnaeve.
\newblock Flag matroids: algebra and geometry.
\newblock {\em to appear in: Interactions with Lattice Polytopes, Springer,
  \url{https://arxiv.org/abs/1811.00272}}, 2020.

\bibitem[CG10]{CG10}
Neil Chriss and Victor Ginzburg.
\newblock {\em Representation theory and complex geometry}.
\newblock Modern Birkh\"{a}user Classics. Birkh\"{a}user Boston, Inc., Boston,
  MA, 2010.
\newblock Reprint of the 1997 edition.

\bibitem[Cra67]{Cra67}
Henry~H. Crapo.
\newblock A higher invariant for matroids.
\newblock {\em J. Combinatorial Theory}, 2:406--417, 1967.

\bibitem[Cra69]{Cra69}
Henry~H. Crapo.
\newblock The {T}utte polynomial.
\newblock {\em Aequationes Math.}, 3:211--229, 1969.

\bibitem[EH20]{EH20}
Christopher Eur and June Huh.
\newblock Logarithmic concavity for morphisms of matroids.
\newblock {\em Adv. Math.}, 367:107094, 2020.

\bibitem[Eis95]{Eis95}
David Eisenbud.
\newblock {\em Commutative algebra}, volume 150 of {\em Graduate Texts in
  Mathematics}.
\newblock Springer-Verlag, New York, 1995.
\newblock With a view toward algebraic geometry.

\bibitem[ELV04]{ELV04}
Gwihen Etienne and Michel Las~Vergnas.
\newblock {The Tutte polynomial of a morphism of matroids. III. Vectorial
  matroids}.
\newblock {\em Adv. in Appl. Math.}, 32(1-2):198--211, 2004.
\newblock Special issue on the Tutte polynomial.

\bibitem[FS10]{FS10}
Alex Fink and David Speyer.
\newblock K-classes of matroids and equivariant localization.
\newblock {\em https://arxiv.org/abs/1004.2403}, 2010.

\bibitem[FS12]{FS12}
Alex Fink and David~E. Speyer.
\newblock {$K$}-classes for matroids and equivariant localization.
\newblock {\em Duke Math. J.}, 161(14):2699--2723, 2012.

\bibitem[Ful98]{Ful98}
William Fulton.
\newblock {\em Intersection theory}, volume~2 of {\em Ergebnisse der Mathematik
  und ihrer Grenzgebiete. 3. Folge. A Series of Modern Surveys in Mathematics
  [Results in Mathematics and Related Areas. 3rd Series. A Series of Modern
  Surveys in Mathematics]}.
\newblock Springer-Verlag, Berlin, second edition, 1998.

\bibitem[GGMS87]{GGMS87}
Israel~M. Gelfand, Robert~M. Goresky, Robert~D. MacPherson, and Vera~V.
  Serganova.
\newblock Combinatorial geometries, convex polyhedra, and {S}chubert cells.
\newblock {\em Adv. in Math.}, 63(3):301--316, 1987.

\bibitem[Gra74]{gram}
J{\o}rgen~P Gram.
\newblock Om rumvinklerne i et polyeder.
\newblock {\em Tidsskrift for mathematik}, 4:161--163, 1874.

\bibitem[Haa05]{Haa05}
Christian Haase.
\newblock Polar decomposition and {B}rion's theorem.
\newblock In {\em Integer points in polyhedra---geometry, number theory,
  algebra, optimization}, volume 374 of {\em Contemp. Math.}, pages 91--99.
  Amer. Math. Soc., Providence, RI, 2005.

\bibitem[Har77]{Har77}
Robin Hartshorne.
\newblock {\em Algebraic geometry}.
\newblock Springer-Verlag, New York-Heidelberg, 1977.
\newblock Graduate Texts in Mathematics, No. 52.

\bibitem[HP18]{HP18}
Chris Heunen and Vaia Patta.
\newblock The category of matroids.
\newblock {\em Appl. Categ. Structures}, 26(2):205--237, 2018.

\bibitem[Ish90]{Ish90}
Masa-Nori Ishida.
\newblock Polyhedral {L}aurent series and {B}rion's equalities.
\newblock {\em Internat. J. Math.}, 1(3):251--265, 1990.

\bibitem[KP92]{KP92}
A.~G. Khovanski\u{\i} and A.~V. Pukhlikov.
\newblock Finitely additive measures of virtual polyhedra.
\newblock {\em Algebra i Analiz}, 4(2):161--185, 1992.
\newblock English transl., St. Petersburg Math. J. 4 (1993), no. 2, 337–356.

\bibitem[KR03]{KR03}
Allen Knutson and Ioanid Rosu.
\newblock Appendix to: Equivariant {$K$}-theory and equivariant cohomology.
\newblock {\em Math. Z.}, 243(3):423--448, 2003.

\bibitem[LdMRS20]{LdMRS20}
Luc\'{\i}a L\'{o}pez~de Medrano, Felipe Rinc\'{o}n, and Kristin Shaw.
\newblock Chern-{S}chwartz-{M}ac{P}herson cycles of matroids.
\newblock {\em Proc. Lond. Math. Soc. (3)}, 120(1):1--27, 2020.

\bibitem[LV75]{LV75}
Michel Las~Vergnas.
\newblock {Extensions normales d'un matro\"{i}de, polyn\^{o}me de Tutte d'un
  morphisme}.
\newblock {\em C. R. Acad. Sci. Paris S\'{e}r. A-B}, 280(22):Ai, A1479--A1482,
  1975.

\bibitem[LV80]{LV80}
Michel Las~Vergnas.
\newblock {On the Tutte polynomial of a morphism of matroids}.
\newblock {\em Ann. Discrete Math.}, 8:7--20, 1980.
\newblock Combinatorics 79 (Proc. Colloq., Univ. Montr\'{e}al, Montreal, Que.,
  1979), Part I.

\bibitem[LV84]{LV84}
Michel Las~Vergnas.
\newblock {The Tutte polynomial of a morphism of matroids. II. Activities of
  orientations}.
\newblock pages 367--380, 1984.

\bibitem[LV99]{LV99}
Michel Las~Vergnas.
\newblock {The Tutte polynomial of a morphism of matroids. I. Set-pointed
  matroids and matroid perspectives}.
\newblock {\em Ann. Inst. Fourier (Grenoble)}, 49(3):973--1015, 1999.
\newblock Symposium \`a la M\'{e}moire de Fran\c {c}ois Jaeger (Grenoble,
  1998).

\bibitem[LV07]{LV07}
Michel Las~Vergnas.
\newblock {The Tutte polynomial of a morphism of matroids. IV. Computational
  complexity}.
\newblock {\em Port. Math. (N.S.)}, 64(3):303--309, 2007.

\bibitem[LV13]{LV13}
Michel Las~Vergnas.
\newblock {The Tutte polynomial of a morphism of matroids---5. Derivatives as
  generating functions of Tutte activities}.
\newblock {\em European J. Combin.}, 34(8):1390--1405, 2013.

\bibitem[MS05]{MS05}
Ezra Miller and Bernd Sturmfels.
\newblock {\em Combinatorial commutative algebra}, volume 227 of {\em Graduate
  Texts in Mathematics}.
\newblock Springer-Verlag, New York, 2005.

\bibitem[Nie74]{Nie74}
Andreas Nielsen.
\newblock Diagonalizably linearized coherent sheaves.
\newblock {\em Bull. Soc. Math. France}, 102:85--97, 1974.

\bibitem[Oxl11]{Oxl11}
James Oxley.
\newblock {\em Matroid theory}, volume~21 of {\em Oxford Graduate Texts in
  Mathematics}.
\newblock Oxford University Press, Oxford, second edition, 2011.

\bibitem[Pos09]{Pos09}
Alexander Postnikov.
\newblock Permutohedra, associahedra, and beyond.
\newblock {\em Int. Math. Res. Not. IMRN}, (6):1026--1106, 2009.

\bibitem[She67]{shephard}
G.~C. Shephard.
\newblock An elementary proof of {G}ram's theorem for convex polytopes.
\newblock {\em Canadian J. Math.}, 19:1214--1217, 1967.

\bibitem[Spe09]{Spe09}
David Speyer.
\newblock A matroid invariant via the {$K$}-theory of the {G}rassmannian.
\newblock {\em Adv. Math.}, 221(3):882--913, 2009.

\bibitem[Tho87]{Tho83}
R.~W. Thomason.
\newblock Algebraic {$K$}-theory of group scheme actions.
\newblock In {\em Algebraic topology and algebraic {$K$}-theory ({P}rinceton,
  {N}.{J}., 1983)}, volume 113 of {\em Ann. of Math. Stud.}, pages 539--563.
  Princeton Univ. Press, Princeton, NJ, 1987.

\bibitem[Tut67]{Tut67}
W.~T. Tutte.
\newblock On dichromatic polynominals.
\newblock {\em J. Combinatorial Theory}, 2:301--320, 1967.

\bibitem[VV03]{VV03}
Gabriele Vezzosi and Angelo Vistoli.
\newblock Higher algebraic {$K$}-theory for actions of diagonalizable groups.
\newblock {\em Invent. Math.}, 153(1):1--44, 2003.

\bibitem[Wel76]{Wel76}
D.~J.~A. Welsh.
\newblock {\em Matroid theory}.
\newblock Academic Press [Harcourt Brace Jovanovich, Publishers], London-New
  York, 1976.
\newblock L. M. S. Monographs, No. 8.

\bibitem[Whi86]{Whi86}
Neil White, editor.
\newblock {\em Theory of matroids}, volume~26 of {\em Encyclopedia of
  Mathematics and its Applications}.
\newblock Cambridge University Press, Cambridge, 1986.

\end{thebibliography}
\bibliographystyle{alpha}

\end{document}